\documentclass[12pt, reqno]{amsart}
\usepackage[margin=1.25in]{geometry}        
\usepackage{graphicx}
\usepackage[section]{placeins}
\usepackage{flafter}
\makeatletter
\def\fps@figure{htbp}
\makeatother
\usepackage{amssymb}
\usepackage{mathtools}
\usepackage{xurl} 
\usepackage{hyperref}
\usepackage{etoolbox}

\makeatletter
\patchcmd{\@setauthors}{\MakeUppercase{\authors}}{%
  \MakeUppercase{\authors}\par\nobreak\smallskip
  {\normalfont\footnotesize\authoraffiliation}%
}{}{\PackageError{version1}{Unable to add the title-page affiliation}{Check the amsart author layout.}}
\makeatother

\usepackage{aliascnt}
\usepackage{doi}

\usepackage{epstopdf}

\usepackage[dvipsnames]{xcolor}
\usepackage{tikz,pgfplots}
\usetikzlibrary{patterns.meta}
\usepgflibrary{shadings}
\usepgflibrary {shadings}
\usetikzlibrary {arrows.meta}

\newcommand\myarrowR[2]{
\draw[arrows = {-Stealth[length=7pt, inset=4pt, width=7pt]},  line width=0.7pt] #1 to[bend right] #2;}
\newcommand\myarrowL[2]{
\draw[arrows = {-Stealth[length=7pt, inset=4pt, width=7pt]},  line width=0.7pt] #1 to[bend left] #2;}

\definecolor{ballcolor}{HTML}{85B8E8} 
\definecolor{simplexcolor}{HTML}{DE6C6C}  
\definecolor{simplexpicturecolor}{HTML}{C73E3E} 
\definecolor{twoptcolor}{HTML}{6ECA4F} 
\definecolor{twoptcolorshading}{HTML}{ADE48B} 
\definecolor{symmbr}{HTML}{FFF594} 
\definecolor{mygray}{HTML}{EEEEEE} 
\definecolor{mymidgray}{HTML}{9E9E9E} 
\definecolor{mypurple}{HTML}{5D3A9B} 
\definecolor{myorange}{HTML}{E66100} 

\newtheorem{theorem}{Theorem}[section]

\newaliascnt{corx}{thmx}

\aliascntresetthe{corx}

\newaliascnt{lemma}{theorem}
\newtheorem{lemma}[lemma]{Lemma}
\aliascntresetthe{lemma}

\newaliascnt{proposition}{theorem}

\aliascntresetthe{proposition}

\newaliascnt{corollary}{theorem}
\newtheorem{corollary}[corollary]{Corollary}
\aliascntresetthe{corollary}

\newaliascnt{conjecture}{theorem}
\newtheorem{conjecture}[conjecture]{Conjecture}
\aliascntresetthe{conjecture}

\newaliascnt{example}{theorem}

\aliascntresetthe{example}

\newaliascnt{question}{theorem}

\aliascntresetthe{question}

\theoremstyle{definition}
\newtheorem*{definition*}{Definition}
\newtheorem*{example*}{Example}
\newtheorem*{examples*}{Examples}

\newcommand{\B}{{\mathbb B}}

\newcommand{\R}{{\mathbb R}}

\newcommand{\Rn}{{{\mathbb R}^n}}
\newcommand{\Sph}{{\mathbb S}}

\DeclareMathOperator{\capzero}{Cap_0}

\DeclareMathOperator{\capp}{Cap_\mathit{p}}
\DeclareMathOperator{\capq}{Cap_\mathit{q}}

\DeclareMathOperator{\capr}{C_{\mathit{r}}}
\DeclareMathOperator{\caps}{C_{\mathit{s}}}

\def\ddefloop#1{\ifx\ddefloop#1\else\ddef{#1}\expandafter\ddefloop\fi}
\def\ddef#1{\expandafter\def\csname bb#1\endcsname{\ensuremath{\mathbb{#1}}}}
\ddefloop ABCDEFGHIJKLMNOPQRSTUVWXYZ\ddefloop

\def\ddef#1{\expandafter\def\csname c#1\endcsname{\ensuremath{\mathcal{#1}}}}
\ddefloop ABCDEFGHIJKLMNOPQRSTUVWXYZ\ddefloop

\title{Riesz capacity ratios with negative exponents}

\author{Qiuling Fan}
\newcommand{\authoraffiliation}{University of Illinois Urbana-Champaign}
\email{qfan0911@gmail.com}

\keywords{Riesz kernel, interaction energy, symmetry breaking, sharp inequality, regular simplex, Bayesian optimization}
\subjclass[2020]{\text{Primary 31B15. Secondary 28A78}}

\begin{document}

\begin{abstract}
We investigate sharp inequalities for ratios of Riesz capacities with negative exponents by combining computational experiments with rigorous analysis. For finite subsets of the line, we prove positivity of equilibrium masses when $-1<p<0$, enabling numerical tests of conjectured extremal ratios. In the plane, comparisons of the disk with regular polygon vertex sets reveal a cascade of transitions among the tested competitors and suggest a precise conjecture for the equilibrium measure of odd polygons, for which we give a partial proof. Numerical intersections of equality curves show that the regions where these sets outperform the disk are not simply nested. Similar numerical intersections occur in three dimensions between the regular-simplex equality curve and those of explicit five-point and six-point configurations. Motivated by the dimensional dependence of these comparisons, we prove that for each fixed $p<-2<q<0$, the regular simplex has a larger capacity ratio than the ball in all sufficiently large dimensions. Accompanying Python and Mathematica code supports reproduction and further testing of the conjectures.
\end{abstract}

\maketitle

\section{Riesz capacities and ratio conjectures}\label{chp:7}

Riesz capacity measures the size of a set in $\bbR^n$ through the pairwise interaction kernel $|x-y|^{-p}$, with exponent $p<n$. Which shapes maximize a ratio of capacities, and how do these shapes change as the two exponents vary? Continuing the capacity-ratio investigations of Clark and Laugesen~\cite{CL25}, we study negative exponents, with particular attention to symmetry breaking: parameter regions where the ball fails to maximize the ratio. Numerical experiments identify candidate extremizers and transition patterns, and guide the formulation of conjectures and the search for proofs.

In one dimension, we prove that every point of a finite set carries positive equilibrium mass when $-1<p<0$ (\autoref{thm:positivity}). This theorem provides the basis for our capacity computations. Experiments with discrete sets then supply numerical evidence for the interval and two-point extremal-ratio conjectures of Clark and Laugesen, stated in \autoref{chp:1d}.

In two dimensions, we build on the symmetry breaking established by Clark and Laugesen~\cite[Theorem~7]{CL25} by comparing the disk with regular polygon vertex sets. The experiments reveal a cascade of transitions among the tested competitors. They also motivate \autoref{conj:n-gon}, which specifies the maximizing triple of vertices for odd polygons when $p<-2$, beyond Bj\"orck's known at-most-three-point support bound. We prove the symmetric special case and test the general conjecture by exhaustive comparison of vertex triples on the plotting grids. The resulting equality curves with the disk intersect, showing that the regions where polygons outperform the disk are not simply nested as the number of vertices increases.

In higher dimensions, comparisons of the regular simplex and the ball suggest a limiting symmetry-breaking region. We prove that, for each fixed pair $p<-2<q<0$, the regular simplex has a larger capacity ratio than the ball in all sufficiently large dimensions; see \autoref{lemma:3dto-2} and \autoref{lemma:3dto0}. In three dimensions, explicit five-point and six-point configurations provide further symmetry-breaking examples. Numerically, their equality curves with the ball are each crossed by the regular-simplex curve, as shown in \autoref{fig:capratio_comparison_higher_dim}. These crossings echo the planar behavior: using configurations with more points does not uniformly enlarge the region where a configuration beats the ball.

We distinguish throughout between proved results, conjectural formulas, and finite numerical evidence. Winning among the tested configurations does not establish global optimality among compact sets, and verification on a parameter grid does not prove the odd-polygon conjecture. The accompanying \href{https://github.com/vhdvhd/Riesz-capacity-ratios-with-negative-exponents}{Python and Mathematica code} allows the experiments to be reproduced and extended. The transition patterns and unresolved conjectures provide concrete questions for further theoretical investigation.

\paragraph{\emph{Outline of the paper}} In \autoref{sec:7.1} we define the Riesz capacity and then \autoref{sec:7.2} introduces the maximization problem for the ratio of capacities. \autoref{chp:8} summarizes the numerical investigations that support the conjectures along with some new theoretical results. \autoref{chp:9} introduces notation that is helpful when $p$ is negative. The following three sections present the research in detail in one, two and higher dimensions respectively.

\subsection{Capacity and energy}\label{sec:7.1}

Let $K$ be a nonempty compact subset of $\bbR^n, n \ge 1$.

The Riesz $p$-capacity of $K$ when $p \neq 0$ is
\[
\capp(K) = \max_\mu \left( \int_K \! \int_K |x-y|^{-p} \, d\mu(x) d\mu(y) \right)^{\! \! -1/p},
\]
where $\mu$ ranges over the probability measures on $K$. When $p=0$, we write $\capzero(K)$ for the logarithmic capacity
\[
\capzero(K) = \max_\mu \, \exp \left( \int_K \! \int_K \log |x-y| \, d\mu(x) d\mu(y) \right).
\]
A measure $\mu$ that achieves the maximum is called a $p$-equilibrium measure. Intuitively, the equilibrium measure should spread out in some fashion across the set $K$ in order to achieve the maximum in the definition. The extent to which it spreads out generally depends on the exponent $p$. \\

In practice, it is often more convenient to work with the energy, rather than the capacity. The energy is \[
V_p(K) = 
\begin{cases}
\min_\mu \int_K \! \int_K |x-y|^{-p} \, d\mu(x) d\mu(y) , & p > 0 , \\
\max_\mu \int_K \! \int_K |x-y|^{-p} \, d\mu(x) d\mu(y) , & p < 0 .
\end{cases}
\]

At $p=0$ we use instead the logarithmic energy
\[
V_{\log}(K)
:= \min_{\mu}\;\iint_{K\times K} \log\!\frac{1}{|x-y|}\, d\mu(x)\,d\mu(y),
\]
where the minimum is taken over probability measures $\mu$ supported on $K$.\\

Expressed in terms of the energy, the Riesz $p$-capacity of $K$ is
\begin{equation*} 
\capp(K) = 
\begin{cases}
V_p(K)^{-1/p} , & p \neq 0 , \\
\exp(-V_{\log}(K)) , & p = 0 .
\end{cases}
\end{equation*}

\paragraph{\emph{Remark}} Riesz energy is the prototypical example of a pairwise interaction energy. The exponent $p$ controls the strength of the repulsive singularity (when $p>0$). The case when $p=1$ in three dimensions is the classical electrostatic energy. In recent years, many authors have investigated Riesz energy and capacity and their discrete analogues, for example, by examining convergence rates as the number of points tends to infinity. The book by Borodachov, Hardin and Saff \cite{BHS19} provides an excellent account of this theory. In a different direction, optimal configurations and symmetry breaking for Riesz capacity under diameter constraints have been studied by Burchard \cite{BCH20} and Lim and McCann \cite{LM21}. 

\subsection{Riesz capacity ratio: introducing the maximization problem}\label{sec:7.2}

For fixed $n,p,q$, we seek the supremum of the capacity ratio over compact $K\subset\bbR^n$ with $\capp(K)>0$, a restriction understood throughout:
\[
\sup_{K\subset \bbR^n}\;\frac{\capq(K)}{\capp(K)}\,.
\]

By definition, the capacity ratio is invariant under translations and rotations of $K$. It is also scale-invariant, since for every $s>0$,
\[
\frac{\capq(sK)}{\capp(sK)}=\frac{\capq(K)}{\capp(K)},
\]
because $\capp(sK)=s\capp(K)$. Thus the ratio depends only on the shape of $K$, not on its size or orientation.

The Riesz capacity–ratio optimization problem introduced by Clark and Laugesen~\cite{CL25} for $-\infty < p,q<n$ unifies many classical results and conjectures. A notable instance is the P\'olya--Szeg\H{o} conjecture~\cite[p.~140]{S56}, which seeks the planar set maximizing the ratio of Newtonian to logarithmic capacity and corresponds to the case $(p,q)=(0,1)$, with a natural generalization to $0<p<q$. See~\cite{CL25} for more related theorems. In these problems, the ball is the known—or conjectured—maximizer in many regions of the $(p,q)$ plane, but in other regions, the regular simplex is conjectured to be the maximizer. This leads to symmetry-breaking behavior across a transition region.

The capacity ratio is interesting especially because geometric quantities like volume and diameter are limiting cases of capacity. Clark and Laugesen~\cite[Theorem~1]{CL25} identified the limiting behaviors as

\begin{equation}\label{eq:limits}
\lim_{p\nearrow n} \frac{\capp(K)^p}{n-p} = \frac{\operatorname{Vol}_n(K)}{|\bbS^{n-1}|}
\quad\text{and}\quad
\lim_{p\to -\infty} \capp(K) = \operatorname{diam}(K),
\end{equation}
for compact $K\subset\R^n$. Thus the capacity interpolates between volume (as $p\to n$) and diameter (as $p\to -\infty$). In the overview diagrams, the boundary labels $n$ and $-\infty$ denote the limiting functionals $\operatorname{Vol}_n(K)^{1/n}$ and $\operatorname{diam}(K)$, respectively, with a positive denominator understood. See \autoref{fig:ballplots} for illustration. Theorem~1.1 in Fan and Laugesen \cite{FL24} extends the volume-side limit to compact strongly $d$-rectifiable sets $E\subset\bbR^n$, where $1\le d\le n$ is an integer, including compact submanifolds:
\[
\lim_{p\nearrow d} \frac{\capp(E)^p}{d-p} = \frac{\cH^d(E)}{|\Sph^{d-1}|}\,.
\]

\begin{figure}[htp]
\begin{center}
\includegraphics[width=0.5\textwidth]{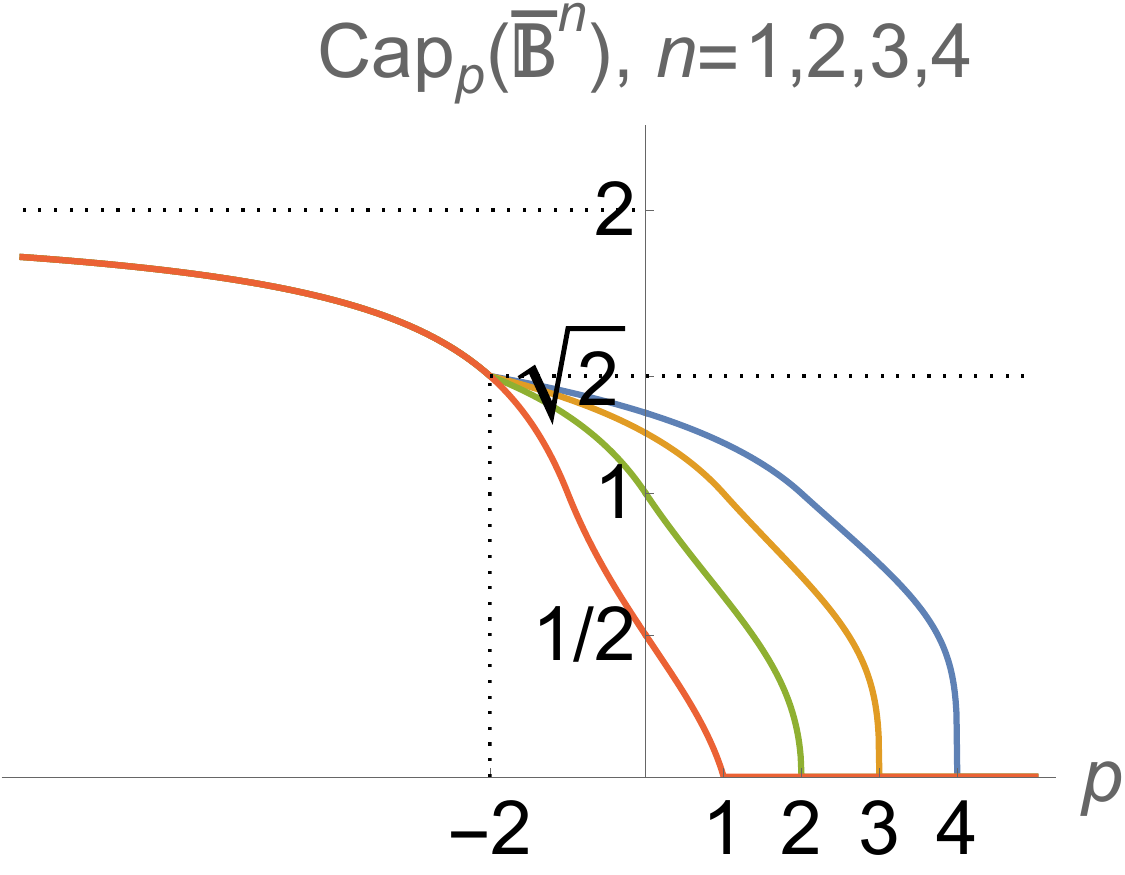}
\end{center}
\caption{\label{fig:ballplots} The Riesz capacity $\capp(\overline{\B}^n)$ of the closed unit ball  as a function of $p$, for four values of $n$. The intercept at $p=0$ is the logarithmic capacity $\capzero(\overline{\B}^n)$. When $p \leq -2$, the capacity of the ball is $2^{1+1/p}$ for each $n$, with limiting value $2$ (the diameter of the ball) as $p \to -\infty$. Figure credit: from Clark and Laugesen \cite[Figure 1]{CL24b}.
}
\end{figure}

We concentrate on the regime $p,q<0$, for two reasons. 
First, the absence of singularities in the kernel makes the analysis technically simpler and allows us to treat discrete sets. 
Second, this negative-exponent regime exhibits symmetry breaking, so the optimal shape need not be a ball. Clark and Laugesen~\cite{CL25} were the first to study the regime $p,q<0$. They showed that with negative exponents the optimal shape need not be a ball. Depending on $(p,q)$, the maximizer can be the endpoints of an interval ($n=1$), the vertices of a regular simplex ($n\ge 2$), or other configurations. See \autoref{fig:pqdiagram1D}, \autoref{fig:pqdiagram2D} and \autoref{fig:pqdiagram3D} for the cases $n=1,2$ and $n\ge3$.

%
%


\section{\bf Summary of results}\label{chp:8}


Our aim is to examine the dashed-outlined regions in \autoref{fig:pqdiagram1D}, \autoref{fig:pqdiagram2D} and \autoref{fig:pqdiagram3D}, with $p,q<0$ where the optimizer for the Riesz capacity ratio remains open in one, two, and higher dimensions. We combine numerical experiments that support the conjectures with some theorems that clarify the shape of potential optimizers. In addition, we identify families of shapes that in two and higher dimensions attain larger ratios than the ball, thus proving that symmetry breaking occurs.

\subsection{Overview in one dimension (see \autoref{fig:pqdiagram1D})}


\begin{figure}[!htb]
\begin{center}
\begin{tikzpicture}[scale=1.7]

\def\n{1}
\def\pmax{1.5}
\def\pmin{-2.5}
\def\qmax{1.5}
\def\qmin{-2.5}


\filldraw[pattern={Lines[angle=-45,distance=2.5pt]}, pattern color=ballcolor, draw=none] (0,0) -- (\n, \n) -- (-1, \n) -- (-1, -1) -- cycle;
\filldraw[ballcolor , draw=ballcolor] (\pmin, \n) -- (-1, \n) -- (-1, -1) -- (\pmin,-1) -- cycle;
\filldraw[pattern={Lines[angle=90,distance=2.5pt]}, pattern color=twoptcolorshading, draw=none] (0,0) -- (-1, -1) -- (0, -1);
\filldraw[color=twoptcolorshading, draw=twoptcolorshading] (-1,\qmin) -- (0,\qmin) -- (0,-1) -- (-1,-1) -- cycle;
\filldraw[color=mymidgray, draw=mymidgray] (\pmin,\qmin) -- (-1,\qmin) -- (-1,-1) -- (\pmin,-1) -- cycle;
\filldraw[color=mygray, draw=mygray,  thick] (\n,\n) -- (\n,\qmin) -- (0,\qmin) -- (0,0) -- cycle;

\draw[ballcolor, ultra thick]  (\pmin,\n) -- (\n,\n);


\draw (\pmin,1.6)   node[below] {\cite[Conjecture 12]{CL25}};
\myarrowL{(-\n-0.55,1.4)}{(-.3,\n-0.45)}
\draw (\pmin-1.55,0.9)   node[below] {\cite[Proposition 3]{CL25}};
\myarrowL{(\pmin-0.56,0.7)}{(\pmin+0.7,0.4)}
\draw (\n+1,1.5)   node[below] {\cite[Theorem 2]{CL25}};
\myarrowR{(\n+0.15,1.27)}{(\n-0.7,\n+0.05)}
\def\tpx{-1.2} 
\def\tpy{\n+0.7} 
\def\tpl{0.7} 
\draw[ballcolor, ultra thick] (\tpx,\tpy) -- (\tpx+\tpl,\tpy);
\filldraw[ballcolor] (\tpx,\tpy) circle (0.25ex);
\filldraw[ballcolor] (\tpx+\tpl,\tpy) circle (0.25ex);

\draw (1.4,\qmin-1.1)   node[above] {\cite[Conjecture 16]{CL25}};
\myarrowR{(1.25,\qmin-0.6)}{(-0.35,-0.65)}
\draw (-0.65,\qmin-1.1)   node[above] {\cite[Proposition 5]{CL25}};
\myarrowL{(-0.7,\qmin-0.6)}{(-0.5,-1.7)}
\def\tpx{-2.5} 
\def\tpy{\qmin-0.85} 
\def\tpl{0.7} 
\draw[twoptcolor!12, ultra thick] (\tpx,\tpy) -- (\tpx+\tpl,\tpy);
\filldraw[twoptcolor] (\tpx,\tpy) circle (0.25ex);
\filldraw[twoptcolor] (\tpx+\tpl,\tpy) circle (0.25ex);

\draw (\pmin-1.2,-2) node[below,align=center] {constant ratio\\\cite[Lemma 4]{CL25}};
\myarrowL{(\pmin-0.5,-2)}{(\pmin+0.6,-1.5)}
\myarrowR{(\pmin-0.3,-2.7)}{(\pmin+0.9,-2)}

\draw (\n+1.2,-0.6)  node[below,align=center] {$+\infty$\\ \cite[Lemma 18]{CL25}};
\myarrowR{(\n+0.8,-0.9)}{(\n-0.5,-1)}


\draw[black,  thick]  (\qmin,\pmin) to (\n,\n);
\draw[->] (\pmin,0) to (\pmax,0);
\draw (\pmax,0)  node[right] {$p$};
\draw[->] (0,\qmin) to (0,\qmax);
\draw (0,\qmax)  node[above] {$q$};
\draw[-] (\n,-0.1) to (\n,0.1);
\draw (\n+0.2, -0.05) node[below] {$\n$};
\draw[-] (-1,-0.1) to (-1,0.1);
\draw (-1.2, -0.05) node[below] {$-1$};
\draw[-] (-0.1,\n) to (0.1,\n);
\draw (-0.05,\n+0.2) node[left] {$\n$};
\draw[-] (-0.1,-1) to (0.1,-1);
\draw (-0.05,-1) node[left] {$-1$};
\draw (\pmin-0.05,0)  node[left] {$p=-\infty$};
\draw[black,fill=black] (\pmin-0.03,-0.13)  rectangle  (\pmin+0.03,0.13) ;
\draw (0,\qmin-0.05)  node[below] {$q=-\infty$};
\draw[black,fill=black] (-0.13,\qmin-0.03)  rectangle  (0.13,\qmin+0.03) ;
\end{tikzpicture}
\caption{\label{fig:pqdiagram1D} ($n=1$) Maximizing $\capq(K)/\capp(K)$ for $K \subset \R$. Solid regions and segments on the boundary indicate rigorous results. Striped regions are conjectural. Blue corresponds to the interval and green to the two-point set. In \autoref{chp:1d}, we provide numerical support for the conjectures in the square where $-1 < p, q <0$. \textsc{Credit:} Figure adapted from Clark and Laugesen \cite[Figure 1]{CL25}.}
\end{center}
\end{figure}


In one dimension, we investigate the case when $p, q \in [-1, 0)$. Specifically, we provide numerical support for the two cases:

\begin{itemize}
	\item In the upper triangle (where $p<q<0$), the closed interval maximizes the capacity ratio among compact sets, see \autoref{thm:1d-interval}.
	\item In the lower triangle (where $q<p<0$), any set with exactly two points maximizes the capacity ratio, see \autoref{thm:1d-2pt}.
\end{itemize}

Also, we prove a positivity result \autoref{thm:positivity} that the mass is positive at each point in a finite set, and  use this theorem to simplify the algorithm we use to compute the Riesz capacity.

\FloatBarrier 
\subsection{Overview in two dimensions (see \autoref{fig:pqdiagram2D})}


\begin{figure}[!htb]
\centering
\resizebox{\linewidth}{!}{%
\begin{tikzpicture}[scale=1.25]

\def\n{2}
\def\pmax{3}
\def\pmin{-4}
\def\qmax{3}
\def\qmin{-4}


\filldraw[top color=ballcolor!40, bottom color=white, middle color=ballcolor!5, shading angle=10, draw=none] (\pmin,\n-2.4) -- (\pmin,\n-2) -- (0,\n-2) -- (-0.75,-0.75) -- (-\n,-0.8);
\filldraw[pattern={Lines[angle=-45,distance=3pt,line width=2.4pt, xshift=2pt]}, pattern color=white, draw=none] (\pmin+0.02,\n-3) -- (\pmin-0.2,\n-2) -- (0,\n-2) -- (-0.65,-0.65);
\filldraw[pattern={Lines[angle=-45,distance=3pt,line width=2.4pt, xshift=2pt]}, pattern color=white, draw=none] (\pmin+0.02,\n-3) -- (\pmin-.2,\n-2) -- (0,\n-2) -- (-0.65,-0.65);
\draw[white, ultra thick] (\pmin,\n-2) -- (\pmin,\n-2.4) -- (-\n,-0.8) -- (-0.75,-0.75);
\filldraw[pattern={Lines[angle=-45,distance=3pt, line width=0.5pt]}, pattern color=ballcolor, draw=none] (0,0) -- (\n, \n) -- (\pmin, \n) -- (\pmin, \n-2) -- (0, \n-2);
\filldraw[color=mygray, draw=mygray] (\n,\n) -- (\n,\qmin) -- (0,\qmin) -- (0,0);
\filldraw[pattern={Lines[angle=90,distance=3pt, line width=0.5pt]}, pattern color=twoptcolorshading, draw=none] (-2,-2) -- (0, -2) -- (0, 0)  ;
\filldraw[color=twoptcolorshading, draw=none] (\pmin,\qmin) -- (-2, -2) -- (0,-2) -- (0,\qmin)  ;
\filldraw[color=simplexcolor] (\pmin,\qmin) -- (\qmin, -2) -- (-2, -2) ;
\filldraw[top color=white, bottom color=symmbr, middle color=symmbr!60, shading angle=-29, draw=white] (\pmin,-0.9) -- (-2,-2) -- (-1.6,-1.6) -- (-2.5,-0.9) -- (-3.5,-0.5) -- (\pmin,-0.42);
\filldraw[pattern={Lines[angle=45,distance=3pt,line width=2.4pt, xshift=2pt]}, pattern color=white, draw=none]  (\pmin,-0.9) -- (-2,-2) -- (-1.6,-1.6) -- (-2.5,-0.9) -- (-3.5,-0.5) -- (\pmin,-0.42);
\filldraw[pattern={Lines[angle=45,distance=3pt,line width=2.4pt, xshift=2pt]}, pattern color=white, draw=none]  (\pmin,-0.9) -- (-2,-2) -- (-1.6,-1.6) -- (-2.5,-0.9) -- (-3.5,-0.5) -- (\pmin,-0.42);
\draw[white] (\pmin,-0.9) -- (\pmin,-0.42);
\filldraw[color=symmbr] plot[] coordinates {(-2., -2.) (-2.0184, -1.93555) (-2.03957, -1.87111) (-2.06393, -1.80666) (-2.09199, -1.74222) (-2.12439, -1.67778) (-2.16191, -1.61333) (-2.20551, -1.54889) (-2.25643, -1.48444) (-2.31628, -1.42) (-2.38713, -1.35555) (-2.47179, -1.29111) (-2.57407, -1.22666) (-2.69934, -1.16222) (-2.8554, -1.09778) (-3.05398, -1.03333) (-3.31366, -0.968887) (-3.66564, -0.904442) (-4, -0.865776) (-4,-2)};


\draw[black,  thick]  (\qmin,\pmin) to (\n,\n);

\draw[->] (\pmin,0) to (\pmax,0);
\draw (\pmax,0)  node[right] {$p$};

\draw[->] (0,\qmin) to (0,\qmax);
\draw (0,\qmax)  node[above] {$q$};

\draw[-] (\n,-0.1) to (\n,0.1);
\draw (\n+0.2, -0.1) node[below] {$\n$};
\draw[-] (-2,-0.1) to (-2,0.1);
\draw (-2, -0.1) node[below] {$-2$};

\draw[-] (-0.1,-2) to (0.1,-2);
\draw (-0.1,-2) node[left] {$-2$};
\draw[-] (-0.1,\n-1) to (0.1,\n-1);
\draw (-0.1,\n-1) node[left] {$1$};
\draw[-] (-0.1,\n) to (0.1,\n);
\draw (-0.1,\n+0.2) node[left] {$\n$};

\draw[ballcolor,fill=ballcolor] (\pmin,\n-2) circle (.32ex);

\draw (\pmin-0.05,0)  node[below] {$p=-\infty$};

\draw (0,\qmin-0.05)  node[below] {$q=-\infty$};
\draw[black,fill=black] (-0.13,\qmin-0.03)  rectangle  (0.13,\qmin+0.03) ;


\draw[-, ballcolor, ultra thick] (\n-2,\n) to (\n,\n);
\draw[-, ballcolor, ultra thick] (\pmin,\n) to (-2,\n);
\draw[-, ballcolor, ultra thick] (\pmin,\n-1) to (-2,\n-1);
\draw[-, ballcolor, ultra thick] (\pmin,\n-2) to (-2,\n-2);
\draw[ballcolor, dash pattern=on 5pt off 5.1pt, thick] (\pmin,\n-2.4)  .. controls (\pmin+0.65,\n-2.5) and (\pmin+0.95,\n-2.5) .. (\pmin+1.5,\n-2.9);
\draw  (\pmin+1.55,\n-3) node[right] {?};
\draw[ballcolor, very thick] (\n-2+0.08,\n-1+0.08) -- (\n-2-0.08,\n-1-0.08);
\draw[ballcolor, very thick] (\n-2-0.08,\n-1+0.08) -- (\n-2+0.08,\n-1-0.08);
\draw[ballcolor,fill=ballcolor] (\pmin,\n) circle (.32ex);
\draw[ballcolor,fill=ballcolor] (\pmin,\n-2) circle (.32ex);
\draw[ballcolor,fill=ballcolor] (\pmin,\n-1) circle (.32ex);


\draw (\pmin-1.8,-1.1) node[above] {symmetry breaking};
\draw (\pmin-2.0,-1.45) node[above] {\cite[Theorem 7]{CL25}};
\myarrowL{(\pmin-0.8,-1.2)}{(\pmin+0.7,-1.6)}

\draw (\pmin-2,\qmin+0.0)   node[above] {\cite[Theorem 10(a)]{CL25}};
\myarrowL{(\pmin-0.87,\qmin+0.5)}{(\pmin+0.6,\qmin+1.2)}
\draw (\pmin-2.1,\qmin+0.7)   node[above] {\cite[Proposition 15]{CL25}};
\myarrowL{(\pmin-1.0,\qmin+1.15)}{(\pmin-0.05,\qmin+1.5)}
\def\sx{(\pmin-2}
\def\sy{\qmin+2.0}
\draw[simplexcolor!30, very thick, fill=simplexcolor!5] (\sx,\sy) -- (\sx-0.35,\sy-0.6) -- (\sx+0.35,\sy-0.6) -- cycle;
\filldraw[simplexpicturecolor] (\sx,\sy) circle (0.25ex);
\filldraw[simplexpicturecolor] (\sx-0.35,\sy-0.6) circle (0.25ex);
\filldraw[simplexpicturecolor] (\sx+0.35,\sy-0.6) circle (0.25ex);

\draw (1.4,\qmin-1.2)   node[above] {\cite[Conjecture 16]{CL25}};
\myarrowR{(1.4,\qmin-0.7)}{(-0.68,\qmin+2.7)}
\draw (-2.4,\qmin-1.2)   node[above] {\cite[Theorem 10(b), Lemma 17]{CL25}};
\myarrowL{(-3.35,\qmin-0.67) }{(\qmin+2,\qmin+0.85)}
\def\tpx{-5.75} 
\def\tpy{\qmin-0.9} 
\def\tpl{0.7} 
\draw[twoptcolorshading!45, very thick] (\tpx,\tpy) -- (\tpx+\tpl,\tpy);
\filldraw[twoptcolor] (\tpx,\tpy) circle (0.25ex);
\filldraw[twoptcolor] (\tpx+\tpl,\tpy) circle (0.25ex);

\draw (\n+1.5,\n-1.2)  node[above] {P\'{o}lya--Szeg\H{o}};
\draw (\n+1.5,\n-1.6)  node[above] {\cite[Conjecture (1.3)]{PS45}};

\myarrowR{(\n+0.45,\n-1)}{(\n-1.8,\n-1)}
\draw (-4,\n+0.9)  node[right] {\cite[Conjecture 12]{CL25}};
\myarrowL{(-1.8,\n+0.7)}{(-0.8,\n-0.8)}
\draw (\pmin-1.7,\n-2+0.50) node[above] {P\'{o}lya--Szeg\H{o}};
\draw (\pmin-1.7,\n-2+0.1) node[above] {\cite[p.{\,}19]{PS51}};
\myarrowL{(\pmin-0.8,\n-1.5)}{(\pmin-0.1,\n-1.9)}
\draw (\pmin-1.7,\n-0.6) node[above] {\cite[Theorem 13]{CL25}};
\myarrowL{(\pmin-0.77,\n-0.5)}{(\pmin-0.1,\n-0.9)}
\draw (\pmin-1.7,\n+0.4) node[above] {isodiametric};
\draw (\pmin-1.7,\n) node[above] {inequality};
\myarrowL{(\pmin-0.75,\n+0.45)}{(\pmin-0.1,\n+0.1)}
\draw (\n+1.3,\n+0.5)  node[above] {Watanabe};
\draw (\n+1.3,\n)  node[above] {\cite[Theorem 2]{W83}};
\myarrowR{(\n+0.4,\n+0.65)}{(\n-1,\n+0.1)}
\filldraw[ballcolor] (-0.7,\n+0.9) circle (1.6ex);

\draw (\n+1.65,-2.2)  node[above] {$+\infty$ };
\draw (\n+1.65,-2.6)  node[above] {\cite[Lemma 18]{CL25}};
\myarrowR{(\n+1.22,-1.95)}{(\n-1,-1.7)}

\end{tikzpicture}
}

\caption{\label{fig:pqdiagram2D} ($n=2$) Maximizing $\capq(K)/\capp(K)$ for $K \subset \R^2$. Solid regions and solid segments indicate rigorous results. Striped regions and dashed curves are conjectural. Blue corresponds to the disk, red to the regular three-point set, and green to the two-point set.\\ In \autoref{chp:2d} we improve the symmetry breaking results in the region $p<-2$ and $-2<q<0$, through a combination of numerical and theoretical work. \textsc{Credit:} Figure adapted from Clark and Laugesen \cite[Figure 2]{CL25}.}
\end{figure}



For $p,q\in[-2,0)$, comparisons of the disk and regular polygon vertex sets in \autoref{sec:2d0to2} support the following conjectures for compact sets:
\begin{itemize}
	\item In the upper triangle (where $-2\le p<q<0$), the closed ball $\overline{\bbB^2}$ maximizes the capacity ratio among compact sets.
	\item In the lower triangle (where $-2\le q<p<0$), any set with exactly two points maximizes the capacity ratio.
\end{itemize}

The case $p\in(-\infty, -2]$ and $q \in [-2, 0)$ is considerably more complicated. Starting from the symmetry-breaking yellow region in \autoref{fig:pqdiagram2D} by Clark and Laugesen \cite[Theorem 7]{CL25}, we found a family of shapes that further improves on the capacity ratio of the disk: the vertex sets $P_N$ of regular $N$-gons where $N$ is odd. 

Specifically, we find a curve \autoref{ineq:region} (with equality) in the $pq$-plane where the capacity ratio of the vertex set of a regular $N$-gon and a disk are equal. The theoretical result is built upon a simply-stated but yet-to-be-proven \autoref{conj:n-gon} that when $p<-2$, an equilibrium measure of $P_N$ is supported on exactly three points: one point and the two most distant vertices from that point. Numerical experiments also support this conjecture.

Moreover, the symmetry-breaking curves comparing each $P_N$ with the disk intersect in \autoref{sec:intersection}, showing that their disk-improvement regions are not nested. The numerical winner maps separately show that the best tested $N$ varies with $p$ and $q$.

\FloatBarrier 
\subsection{Overview in higher dimensions (see \autoref{fig:pqdiagram3D})}


\begin{figure}[!htb]
\centering
\resizebox{\linewidth}{!}{%
\begin{tikzpicture}[scale=0.8]

\def\n{5}
\def\pmax{6}
\def\pmin{-6}
\def\qmax{6}
\def\qmin{-6}


\filldraw[top color=ballcolor!40, bottom color=white, middle color=ballcolor!5, draw=none] (\pmin+0.02,\n-3) -- (\pmin+0.02,\n-2.02) -- (0,\n-2.02) -- (0,0);
\filldraw[pattern={Lines[angle=-45,distance=3pt,line width=2.4pt, xshift=2pt]}, pattern color=white, draw=none] (\pmin+0.02,\n-3) -- (\pmin-0.2,\n-2) -- (0,\n-2) -- (0,0);
\filldraw[pattern={Lines[angle=-45,distance=3pt,line width=2.4pt, xshift=2pt]}, pattern color=white, draw=none] (\pmin+0.02,\n-3) -- (\pmin-.2,\n-2) -- (0,\n-2) -- (0,0);
\draw[white, ultra thick] (\pmin,\n-3) -- (0,0);
\filldraw[pattern={Lines[angle=-45,distance=3pt, line width=0.5pt]}, pattern color=ballcolor, draw=none] (0,0) -- (\n, \n) -- (\pmin, \n) -- (\pmin, \n-2.02) -- (0, \n-2.02) ;
\filldraw[color=mygray, draw=mygray, ultra thick] (\n,\n) -- (\n,\qmin) -- (0,\qmin) -- (0,0);
\filldraw[pattern={Lines[angle=90,distance=3pt, line width=0.5pt]}, pattern color=twoptcolorshading, draw=none] (\pmin,\qmin) -- (0, \qmin) -- (0, 0)  ;
\filldraw[pattern={Lines[angle=45,distance=3pt, line width=0.5pt]}, pattern color=simplexcolor, draw=none] (\pmin,\qmin) -- (\qmin, -2) -- (-2, -2)  ;


\draw[-, ballcolor, ultra thick] (\n-2,\n) to (\n,\n);
\draw[-, ballcolor, ultra thick] (\pmin,\n) to (-2,\n);
\draw[-, ballcolor, ultra thick] (\pmin,\n-1) to (-2,\n-1);
\draw[-, ballcolor, ultra thick] (\pmin,\n-2) to (-2,\n-2);
\draw[ballcolor,fill=ballcolor] (\pmin,\n) circle (.5ex);
\draw[ballcolor,fill=ballcolor] (\pmin,\n-2) circle (.5ex);
\draw[ballcolor,fill=ballcolor] (\pmin,\n-1) circle (.5ex);
\draw[-, ultra thick, simplexcolor] (\pmin,\qmin) -- (\pmin, -2);
\draw[twoptcolorshading, ultra thick] (0, \qmin ) -- (\pmin,\qmin);
\draw[ballcolor, dash pattern=on 5pt off 5.1pt, thick] (\pmin,\n-3)  .. controls (\pmin+1,\n-3.2)  .. (\pmin+1.59,\n-3.7);
\draw[ballcolor, very thick] (\n-2+0.11,\n-1+0.11) -- (\n-2-0.11,\n-1-0.11) ;
\draw[ballcolor, very thick] (\n-2-0.11,\n-1+0.11) -- (\n-2+0.11,\n-1-0.11) ;


\draw (\n+2.3,-3.5)  node[above] {$+\infty$};
\draw (\n+2.3,-4.1)  node[above] {\cite[Lemma 18]{CL25}};
\myarrowR{(\n+1.6,-3.1)}{(\n-2,-3)}

\draw (-3,0.25)  node[above] {symmetry breaking?};

\draw (\n+2.2,\n-2.2)  node[above] {P\'{o}lya--Szeg\H{o}};
\draw (\n+2.2,\n-2.9)  node[above] {\cite[Conjecture 6]{CL25}};
\myarrowR{(\n+0.6,\n-1.8)}{(\n-1.7,\n-1)}

\draw (-4,\n+1.15)  node[above] {\cite[Conjecture 12]{CL25}};
\myarrowL{(-2.43,\n+1.25)}{(\n-3.7,\n-1.2)}
\draw (\pmin-3,\n+0.8) node[above] {isodiametric};
\draw (\pmin-3,\n+0.1) node[above] {inequality};
\myarrowL{(\pmin-1.5,\n+0.85)}{(\pmin-0.1,\n+0.1)}
\draw (\pmin-3.2,\n-0.8) node[above] {\cite[Theorem 13]{CL25}};
\myarrowL{(\pmin-1.4,\n-0.4) }{(\pmin-0.1,\n-1+0.1)}
\draw (\pmin-3.2,\n-2+0.1) node[above] {Szeg\H{o} \cite[p.{\,}420]{S31}};
\myarrowL{(\pmin-1.15,\n-1.42)}{(\pmin-0.1,\n-2+0.1)}
\draw (\n+2.3,\n+0.5)  node[above] {Watanabe};
\draw (\n+2.3,\n-0.3)  node[above] {\cite[Theorem 2]{W83}};
\myarrowR{(\n+0.95,\n+0.7)}{(\n-1,\n+0.1)}
\shade[ball color=ballcolor] (1.3,\n+1.3) circle (3ex);

\draw (-4.9,\qmin-1.76)   node[above] {\cite[Conjecture 16]{CL25}};
\myarrowL{(-4.7,\qmin-0.95) }{(-3,\qmin+1.4)}
\draw (-1.0,\qmin-1.76)   node[above] {\cite[Lemma 17]{CL25}};
\myarrowL{(-1.5,\qmin-1.05)}{(-1.2,\qmin-0.05)}
\def\tpx{\pmin-2.5} 
\def\tpy{\qmin-1.27} 
\def\tpl{1.3} 
\draw[twoptcolorshading!45, very thick] (\tpx,\tpy) -- (\tpx+\tpl,\tpy);
\filldraw[twoptcolor] (\tpx,\tpy) circle (0.4ex);
\filldraw[twoptcolor] (\tpx+\tpl,\tpy) circle (0.4ex);

\draw (\pmin-3.0,\qmin+2.3)   node[above] {\cite[Conjecture 14]{CL25}};
\myarrowL{(\pmin-1,\qmin+2.7)}{(\pmin+1.4,\qmin+2.4)}
\draw (\pmin-2.7,\qmin-0.5)   node[above] {\cite[Proposition 15]{CL25}};
\draw (\pmin-2.7,\qmin+1)   node[above] {Lim--McCann};
\draw (\pmin-2.7,\qmin+0.2)   node[above] {\cite[Cor.\,2.2]{LM21}};
\myarrowL{(\pmin-1,\qmin+1)}{(\pmin-0.08,\qmin+1.1)}
\def\sx{(\pmin-2.7} 
\def\sy{-1.3} 
\draw[simplexcolor!30, very thick, fill=simplexcolor!5] (\sx,\sy) -- (\sx-0.7,\sy-1) -- (\sx+0.3,\sy-1.4)  -- (\sx+0.55,\sy-0.7) -- cycle;
\draw[simplexcolor!30, very thick] (\sx,\sy) -- (\sx+0.3,\sy-1.4);
\draw[simplexcolor!30, dash pattern=on 3pt off 3pt, very thick] (\sx-0.7,\sy-1)  -- (\sx+0.55,\sy-0.7);
\filldraw[simplexpicturecolor] (\sx,\sy) circle (0.4ex);
\filldraw[simplexpicturecolor] (\sx-0.7,\sy-1)  circle (0.4ex);
\filldraw[simplexpicturecolor] (\sx+0.55,\sy-0.7) circle (0.4ex);
\filldraw[simplexpicturecolor] (\sx+0.3,\sy-1.4) circle (0.4ex);


\draw[black, thick]  (\qmin,\pmin) to (\n,\n);

\draw[->] (\pmin,0) to (\pmax,0);
\draw (\pmax,0)  node[right] {$p$};
\draw (\pmin-0.05,-0.1)  node[below] {$p=-\infty$};
\draw[black,fill=black] (\pmin-0.03,-0.2)  rectangle  (\pmin+0.03,0.2) ;

\draw[->] (0,\qmin) to (0,\qmax);
\draw (0,\qmax)  node[above] {$q$};
\draw (0,\qmin-0.05)  node[below] {$q=-\infty$};
\draw[black,fill=black] (-0.2,\qmin-0.03)  rectangle  (0.2,\qmin+0.03) ;

\draw[-] (\n,-0.1) to (\n,0.1);
\draw (\n+0.2, -0.1) node[below] {$n$};
\draw[-] (\n-2,-0.1) to (\n-2,0.1);
\draw (\n-2, -0.1) node[below] {$n-2$};
\draw[-] (-2,-0.1) to (-2,0.1);
\draw (-2, -0.1) node[below] {$-2$};

\draw[-] (-0.1,-2) to (0.1,-2);
\draw (-0.1,-2) node[left] {$-2$};
\draw[-] (-0.1,\n-2) to (0.1,\n-2);
\draw (-0.1,\n-2-0.25) node[left] {$n-2$};
\draw[-] (-0.1,\n-1) to (0.1,\n-1);
\draw (-0.1,\n-1) node[left] {$n-1$};
\draw[-] (-0.1,\n) to (0.1,\n);
\draw (-0.1,\n+0.2) node[left] {$n$};

\end{tikzpicture}
}
\caption{\label{fig:pqdiagram3D} ($n \geq 3$) Maximizing $\capq(K)/\capp(K)$ for $K \subset \Rn$. Solid regions of the diagram and the  solid segments indicate rigorous results. Striped regions and dashed curves are conjectural. Blue corresponds to the ball, red to the regular $(n+1)$-point set, and green to the two-point set. Symmetry breaking is shown for a certain region in the third quadrant in \autoref{chp:3d}. As the figure indicates, one should also expect symmetry breaking in part of the second quadrant. \textsc{Credit:} Figure adapted from Clark and Laugesen \cite[Figure 3]{CL25}.}
\end{figure}


In higher dimensions, we focus our interests on the region where $p<-2$ and $-2<q<0$.

We first show that like in two dimensions, a regular simplex in $\bbR^n$ surpasses the ball below a certain curve \autoref{eq:curve} in the parameter plane. We find that the curve gets closer to the lines $p=-2$ and $q=0$ when $n$ increases to infinity in \autoref{lemma:3dto-2} and \autoref{lemma:3dto0}.

In \autoref{sec:3d-algo}, we give explicit five-point and six-point configurations on the unit sphere whose capacity ratios exceed that of the ball: a split tetrahedron and a regular pentagonal pyramid. We compare their symmetry-breaking regions with that of the regular simplex using the curves on which each configuration's capacity ratio equals that of the ball. As in two dimensions, these numerical curves exhibit an intersection phenomenon: the regular-simplex curve crosses the curves for both configurations in \autoref{fig:capratio_comparison_higher_dim}(b). On the sampled grid, the regions are not all nested, and using configurations with more points does not yield a uniformly larger symmetry-breaking region.

\section{ \bf Notation}\label{chp:9}

As a simplification, we introduce a new notation of capacity for negative parameters. We will keep using the notations in the following sections.

\begin{definition*}
	Given a compact set $K$ and $r>0$. We define a new notation for the energy when $p=-r$, by letting
	\begin{align*}
		U_{r}(K) \coloneqq V_{-r}(K) = \max_{\mu(K)=1} \int_K \int_K |x-y|^r \,d\mu(x)\,d\mu(y),
	\end{align*}
	and we define
	\begin{align*}
		C_{r}(K) = \operatorname{Cap}_{-r}(K),
	\end{align*}
	so that
	\begin{align*}
		C_{r}(K) = U_{r}(K)^{1/r} = 
		\left( \max_{\mu(K)=1} \int_K \int_K |x-y|^r \,d\mu(x)\,d\mu(y)\right)^{\! \!1/r}.
	\end{align*}
	 Note that when $K$ is a finite set with $N$ points, the energy can be written as
	 \begin{align}\label{eq:discrete_energy}
	 	U_{r}(K) \coloneqq \max_{\sum_{i=1}^N m_i=1} \sum_{i\neq j}m_i m_j|x_i - x_j|^r,
	 \end{align}
	 where $m_i \ge 0$ for all $i$.
\end{definition*}

\section{\bf One dimension}\label{chp:1d}

This section gives numerical support for the conjectured optimal shapes in \autoref{fig:pqdiagram1D} when $p$ and $q$ are in $[-1, 0)$. To help computation, we also develop a positivity theorem which states that the equilibrium masses are positive at each point.

\subsection{Conjectures}

The two main conjectures we study in one dimension are:

\begin{conjecture}[Interval conjecture, see Clark and Laugesen \protect{\cite[Conjecture 12]{CL25}}]\label{thm:1d-interval}
	Let $0<r<s<1$ and $I=[-1,1]$ be the closed interval.  Then 
	\begin{align*}
		\min_{K} \frac{\caps(K)}{\capr(K)} = \frac{\caps(I)}{\capr(I)},
	\end{align*}
	where $K$ ranges over compact subsets of $\bbR$ containing at least two points.
\end{conjecture}

The right side in the conjecture is computable because the capacity of the interval $I = [-1, 1]$ is given by
	\begin{align*}
		\capr(I) = \left(\frac{\Gamma((1+r)/2) \Gamma(1-r/2)}{\Gamma(1/2)}\right)^{\!\!1/r}, \quad 0<r<1.
	\end{align*}
	The formula can be found in \cite[Appendix A]{CL24b} with $p=-r$.
By scaling invariance we know that any closed bounded interval $[a,b]$ has the same capacity ratio.

\begin{conjecture}[Two-point set conjecture, see Clark and Laugesen \protect{\cite[Conjecture 16]{CL25}}]\label{thm:1d-2pt}
	Let $0<r<s<1$. Then 
	\begin{align*}
		\max_{K} \frac{\caps(K)}{\capr(K)} = \frac{\caps(\{0,1\})}{\capr(\{0,1\})}.
	\end{align*}
\end{conjecture}

The right side in this conjecture is also computable because the capacity of the set $\{0, 1\}$ with two points is
	\begin{align*}
		\capr(\{0,1\}) = \frac{1}{2^{1/r}}, \quad r > 0,
	\end{align*}
and the equilibrium measure has half the mass at each of the two points, which one can check directly as follows: If $\mu$ has mass $a$ at $0$ and mass $b$ at $1$, where $a+b=1$, then $\int_K \int_K |x-y|^r d\mu d\mu = 2ab = 2a(1-a) \le 1/2$, with equality when $a=b=1/2$.


\subsection{Discrete sets and the Positivity Theorem}

Before developing numerical evidence for the conjectures, we want to understand the equilibrium measure better, by proving that the mass at every point in a discrete set is positive. The theorem also greatly helps improve the later algorithm to compute the capacity, in \autoref{sec:1d-numerical}.

Why do we care about discrete sets? Because by \cite[Proposition 19]{CL25} we know that when $r>0$, the capacity of any compact set can be approximated by a sequence of discrete subsets of $K$, and it is feasible to compute the capacity numerically for discrete sets, in the following section.

\begin{theorem}[Positivity Theorem for discrete sets]\label{thm:positivity}
	Let $0<r<1$ and $N\ge 1$. If $K=\{x_1, \ldots, x_N \} \subset \bbR$ is a finite set of distinct points, then the optimal weights $m_i$ for the energy in \eqref{eq:discrete_energy} are positive.
\end{theorem}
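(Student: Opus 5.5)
The plan is to use a first-order (variational) argument for the maximization over the simplex, combined with the shape of the potential on the line. Existence of a maximizer $(m_1,\dots,m_N)$ is clear, since the energy in \eqref{eq:discrete_energy} is a continuous function on the compact simplex $\{m_i\ge 0,\ \sum m_i=1\}$. The case $N=1$ is trivial, so assume $N\ge 2$. Define the potential of the optimal weights by
\[
f(x)=\sum_{j=1}^N m_j |x-x_j|^r, \qquad x\in\bbR .
\]
The energy equals $\sum_i m_i f(x_i)$, since the diagonal terms vanish.

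\emph{Step 1 (first-order conditions).} Let $x_j$ be in the support ($m_j>0$) and let $x_k$ be any point. Move a small mass $\varepsilon$ from $x_j$ to $x_k$. Because the energy is a quadratic form, the change in energy is $2\varepsilon\bigl(f(x_k)-f(x_j)\bigr)+O(\varepsilon^2)$. Maximality forces $f(x_k)\le f(x_j)$. Applying this in both directions among support points shows that $f$ takes a common value $\lambda$ on the support, and that $f(x_k)\le\lambda$ at every point with $m_k=0$. The support has at least two points, because a single point mass gives energy $0$, while equal weights on two points give a positive energy.

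\emph{Step 2 (contradiction via one-dimensional geometry).} Suppose some $m_k=0$, and let $a<b$ be the smallest and largest support points. There are two cases.
\begin{itemize}
\item If $x_k>b$, every term $m_j|x-x_j|^r$ with $m_j>0$ is strictly increasing for $x>b$. Hence $f(x_k)>f(b)=\lambda$. The case $x_k<a$ is symmetric.
\item If $a<x_k<b$, choose consecutive support points $a'<x_k<b'$, so that no support point lies in $(a',b')$. On $[a',b']$ each term $|x-x_j|^r$ with $m_j>0$ is a concave function of $x$, because $t\mapsto t^r$ is concave on $[0,\infty)$ for $0<r<1$ and $x-x_j$ has constant sign there. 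The terms from $x_j=a'$ and $x_j=b'$ are strictly concave. So $f$ is strictly concave on $[a',b']$ with $f(a')=f(b')=\lambda$, which gives $f(x_k)>\lambda$.
\end{itemize}
Both cases contradict Step 1, so every $m_k>0$.

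The main point needing care is the strict concavity in the interior case. One must use consecutive support points, so that no kink of $f$ (a point $x_j$ with $m_j>0$) lies inside the interval. This is also exactly where the hypothesis $r<1$ enters: for $r\ge 1$ the potential is convex or affine between support points, and the argument fails.
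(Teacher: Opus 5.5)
Your proof is correct and is essentially the paper's argument. Both proofs use the first variation of the energy under a small mass transfer onto an unweighted point, with monotonicity of $t^r$ when the point lies outside the support's hull and strict concavity of $t^r$ for $0<r<1$ when it lies between consecutive support points. The only difference is packaging: you first derive that the potential equals $\lambda$ on the support and compare with that. The paper, in the interior case, instead removes mass from the two neighbouring support points in the barycentric proportions $\alpha$ and $1-\alpha$, so concavity alone makes the derivative positive for any measure missing a point, without the equal-potential condition.
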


\begin{proof}
	When $N=1$, the only mass is $m_1 = 1$.
	
	Suppose $N \ge 2$ and consider a measure that is supported on $k < N$ points, which we suppose are $x_1, \ldots, x_k$, so that $m_i > 0$ for $i=1, \ldots, k$, and $m_1 + \cdots + m_k = 1$. We will show that this measure is not a maximizer, which implies that the equilibrium measure (optimal weights) must have $k=N$.
	
		We prove that the appearance of $x_{k+1}$ will increase the energy, no matter whether $x_{k+1}$ is on one side of the previous $k$ points, or in between some $x_i$ and $x_j$.

		Case (a). Suppose $x_{k+1} < x_i$ for $i=1,\ldots, k$ and $x_j$ is the closest point to $x_{k+1}$. Let the new weights be $m_i' = m_i$ for $i\neq j$ and $m_j' = m_j - \epsilon$, $m_{k+1}'= \epsilon$.
			Consider the energy difference 
			\begin{align*}
			f(\epsilon) &= \frac{1}{2} \left( \sum_{i,l=1}^{k+1}m_i' m_l' |x_i-x_l|^r - \sum_{i,l=1}^k m_i m_l |x_i - x_l|^r \right) \\
			&= \sum_{i=1}^k m_i(m_j-\epsilon) |x_i-x_j|^r + \sum_{i=1}^{k} m_i \epsilon |x_i-x_{k+1}|^r \\
			&\quad - \epsilon^2|x_j-x_{k+1}|^r - \sum_{i=1}^k 	m_i m_j |x_i-x_j|^r.
			\end{align*}
			Notice that $f(0) = 0$ and
			\begin{align*}
				f'(\epsilon) = \sum_{i=1}^k m_i \left( |x_i - x_{k+1}|^r - |x_i - x_j|^r \right) - 2\epsilon |x_j - x_{k+1}|^r,
			\end{align*}
			so that
			\begin{align*}
				f'(0) = \sum_{i=1}^k m_i \left( |x_i - x_{k+1}|^r - |x_i - x_j|^r \right).
			\end{align*}
			Notice that $|x_i-x_{k+1}| >|x_i-x_j|$, since $x_{k+1} < x_j < x_i$ for all $i=1,\ldots, k$ and $i\neq j$. Also, $x_i - x_j = 0$ when $i=j$. Hence $f'(0)>0$, meaning that by putting a small positive weight on $x_{k+1}$ the energy of the measure increases.
			
		Case (b). Suppose $x_{k+1} > x_i$ for $i=1,\ldots, k$. The proof is similar to $(a)$.
		
		Case (c). Suppose $x_{k+1}$ is in between some $x_i$ and $x_j$. Pick the points closest to $x_{k+1}$ such that $\ldots < x_i < x_{k+1} < x_j < \ldots$. Choose $\alpha \in (0,1)$ such that $x_{k+1} = \alpha x_i + (1-\alpha) x_j$. Let the new weights be $m_{k+1}' = \epsilon, m_i' = m_i - \alpha \epsilon, m_j' = m_j - (1-\alpha)\epsilon$ and $m_l' = m_l$ for all other indices $l$.
		

			Consider the energy difference
			\begin{align*}
				f(\epsilon) = \frac{1}{2} \left( \sum_{n,l=1}^{k+1}m_n' m_l' |x_n-x_l|^r - \sum_{n,l=1}^k m_n m_l |x_n - x_l|^r \right).
			\end{align*}
			Notice that $f(0)=0$ and
			\begin{align*}
				f'(0) = \sum_{l=1}^k m_l \left( |x_l - x_{k+1}|^r - \alpha |x_l - x_i|^r - (1-\alpha) |x_l - x_j|^r \right).
			\end{align*}
			By plugging in $x_{k+1} = \alpha x_i + (1-\alpha) x_j$, we have $|x_{l} - x_{k+1}| = |\alpha (x_l-x_i) + (1-\alpha) (x_l - x_j)|$. First notice that $x_l - x_i$ and $x_l - x_j$ have the same sign. By strict concavity of the function $g(u) = u^r$ (using that $0<r<1$), we have $|x_{l} - x_{k+1}|^r > \alpha |x_l-x_i|^r + (1-\alpha) |x_l - x_j|^r$. Therefore, $f'(0)>0$.
\end{proof}

\subsection{Algorithm}\label{1dim-algo}

Due to the Positivity Theorem \ref{thm:positivity}, we are able to use Lagrangian multipliers directly to compute the capacity. Specifically, with the Positivity Theorem we are sure that all the masses are nonzero, and so we can solve the masses directly from the formula of Lagrange multipliers.

Consider the Lagrange function 
\begin{align*}
	\cL(m_1, \ldots, m_N, \lambda) = \sum_{i\neq j}m_i m_j|x_i - x_j|^r - \lambda \left(\sum_{i=1}^N m_i - 1 \right).
\end{align*}
Since the optimal masses $m_i$ are known to be positive, they must satisfy the Lagrange multiplier condition $\nabla_{m, \lambda} \, \cL (m_1, \ldots, m_N, \lambda)=0$. That is
\begin{align*}
	\frac{\partial \cL}{\partial m_i}(m_1, \ldots, m_N, \lambda) &= 2 \sum_{j\neq i} m_j|x_i-x_j|^r - \lambda=0, \\
	\frac{\partial \cL}{\partial \lambda} (m_1, \ldots, m_N, \lambda) &= 1-\sum_{i=1}^N m_i = 0.
\end{align*}
We can rewrite the equations above in matrix form:
\begin{align*}
	\begin{bmatrix}
	0 & 2|x_1-x_2|^r & \cdots & 2|x_1 - x_N|^r & -1 \\
	2|x_2-x_1|^r & 0 & \cdots & 2|x_2 - x_N|^r & -1 \\
	\vdots & \vdots & \ddots & \vdots & -1 \\
	2|x_N-x_1|^r & \cdots & \cdots & 0 & -1 \\
	1 & 1 & 1 & 1 & 0
\end{bmatrix}
\begin{bmatrix}
m_1 \\ \vdots \\ m_N \\ \lambda
\end{bmatrix}
 = \begin{bmatrix}
0 \\ \vdots \\ 0 \\ 1
\end{bmatrix}.
\end{align*}

The augmented matrix is nonsingular: otherwise a nonzero mass-preserving null direction would keep the quadratic energy constant from a positive maximizer to a boundary point of the probability simplex, contradicting \autoref{thm:positivity}. The masses $m_1, \ldots, m_N$ of the equilibrium measure are obtained by multiplying both sides of the equation by the inverse matrix.

\subsection{Numerical results for discrete sets}\label{sec:1d-numerical}

To provide numerical support for \autoref{thm:1d-interval} and \autoref{thm:1d-2pt}, we sample numerous $N$-point sets and display the resulting histogram in \autoref{fig:1dim:hist}. All sampled ratios lie between the interval and two-point ratios. As $N$ increases, the sampled minima tend to decrease towards the interval ratio.
 
\begin{figure}[!htbp]
\centering
\setlength{\captionindent}{0pt}
\includegraphics[width=\textwidth]{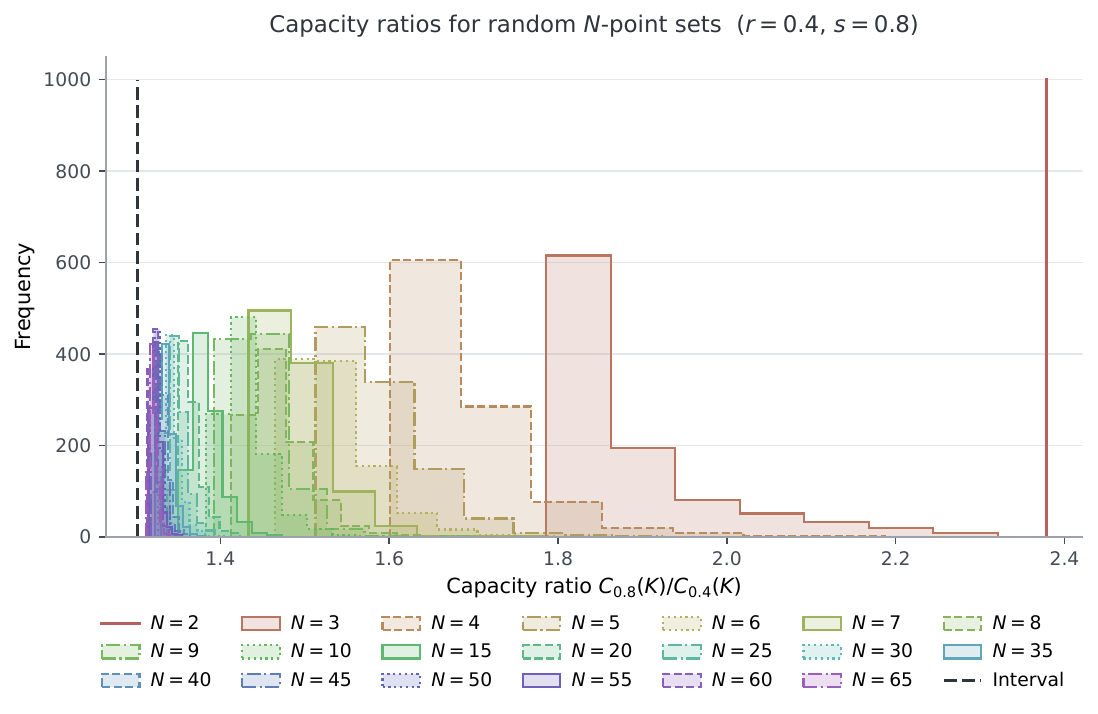}
	\caption{Capacity-ratio histograms for $r=0.4$ and $s=0.8$, using $1000$ random $N$-point sets in the unit interval for each $N=2,3,\ldots,10,15,20,\ldots,65$. Capacities are computed using \autoref{1dim-algo}. As $N$ increases, colors progress from red to purple through the spectrum and the distributions shift left, indicating smaller ratios on average. The reference lines mark the two-point ratio $\approx 2.38$ (red) and interval ratio $\approx 1.30$ (black). The code is available on \href{https://github.com/vhdvhd/Riesz-capacity-ratios-with-negative-exponents}{GitHub}.}
	\label{fig:1dim:hist}
\end{figure}

\FloatBarrier 
\subsection{Approximation of the equilibrium distribution}

We also compare the discrete equilibrium distribution for 1000 points with the interval equilibrium distribution, whose density on $(-1,1)$ for $r=-p \in (0,1)$ is (see \cite[Appendix A]{CL24b})
\begin{align*}
	d \mu (x) = \frac{\Gamma(1-r/2)}{\Gamma(1/2) \Gamma((1-r)/2)}\frac{dx}{(1-x^2)^{(1+r)/2}}.
\end{align*}

The comparison of the cumulative distribution functions of 1000 points and the interval $[0,1]$ is in \autoref{fig:1dim:interval}. The close agreement away from the endpoints gives confidence in our numerical algorithm.

\begin{figure}
\centering
\setlength{\captionindent}{0pt}
\includegraphics[width=0.95\textwidth]{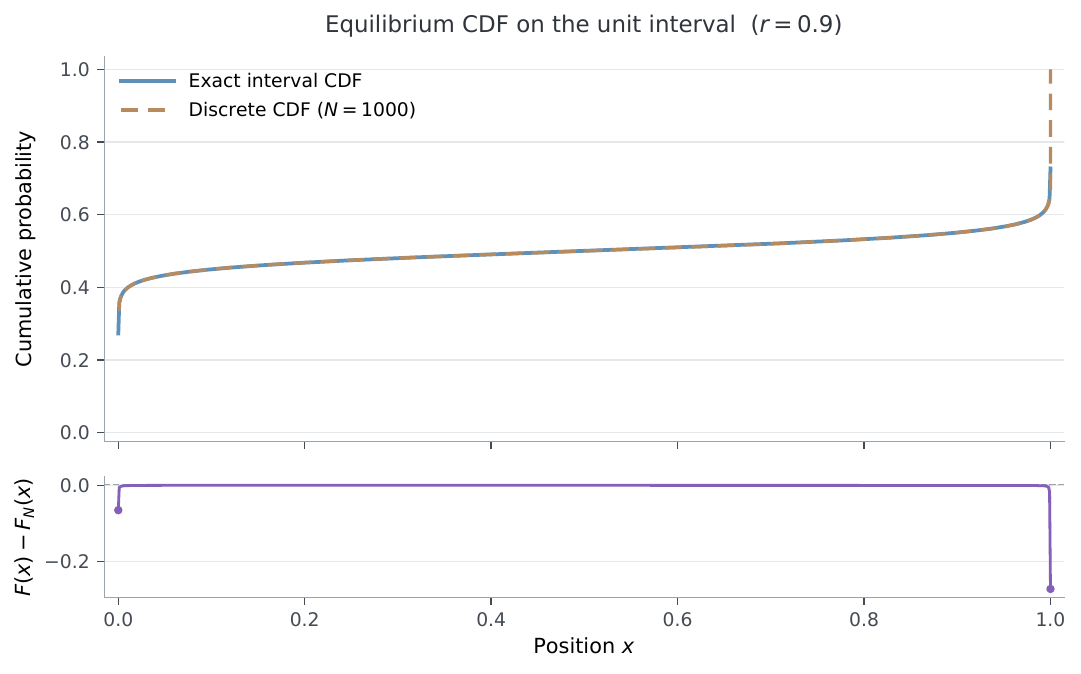}
	\caption{For $r=0.9$, we compare the theoretical interval CDF $F$ (blue) with the numerical CDF $F_N$ (orange) of the energy-maximizing mass distribution on $1000$ equally spaced points in $[\epsilon,1-\epsilon]$, where $\epsilon=5\times10^{-6}$. The lower panel shows $F-F_N$ evaluated at the support points, joined by line segments; agreement is close away from the endpoints. The code is available on \href{https://github.com/vhdvhd/Riesz-capacity-ratios-with-negative-exponents}{GitHub}.}
	\label{fig:1dim:interval}
\end{figure}

\FloatBarrier 
We place the 1000 points in $[\epsilon,1-\epsilon]$ so that the numerical CDF vanishes at $0$. Endpoint discrepancies remain; with fixed $\epsilon>0$, this does not give uniform convergence to the CDF on $[0,1]$ as the number of points increases.

Our method chooses 1000 equally spaced points and computes the optimal mass distribution for these points. Alternatively, one could find the optimal point configuration in $[0,1]$ with equal masses at each point. In that case, the numerical CDF converges to the interval CDF as the number of points increases to infinity \cite[Theorem 4.2.2]{BHS19}.

\section{ \bf Two dimensions}\label{chp:2d}

We investigate which set maximizes the capacity ratio when $r\ge 2, 0< s <2$ (the unknown region $p\le -2, -2< q <0$ in \autoref{fig:pqdiagram2D}).

The capacity of the unit disk $\overline{{\bbB}^2}$ is given in \cite[Appendix A]{CL24b}:
\begin{equation}\label{eq:capacity_B2}
	\capr(\overline{{\bbB}^2})
	= \left\{
	\begin{aligned}
		&2^{1-1/r} \quad & \text{when } r\ge 2,\\
		&2 \left( \frac{\Gamma((1+r)/2)}{\sqrt{\pi} \,\Gamma(1+r/2)} \right)^{\! 1/r} & \text{when } 0<r<2.
	\end{aligned}
	\right.
\end{equation}

Therefore, the capacity ratio of the closed unit disk when $r\ge 2, 0< s <2$, is
\begin{equation*}
	\frac{C_s(\overline{{\bbB}^2})}{C_r(\overline{{\bbB}^2})} = 2^{1/r} \left( \frac{\Gamma((1+s)/2)}{\sqrt{\pi} \,\Gamma(1+s/2)} \right)^{\! 1/s}.
\end{equation*}

Our goal is to find shapes with higher capacity ratios than the disk, in parts of the region  $r\ge 2, 0< s <2$.

\subsection{Capacity of regular $N$-gon}

We discuss the capacity of regular $N$-gons and the vertex set of the regular $N$-gon. 

\begin{definition*}
	Let $Q_N$ be the regular $N$-gon, and $P_N$ be the vertex set of $Q_N$, both of which are inscribed in the unit circle ($N\ge 3$). Notice that $Q_N$ is the convex hull of $P_N$.
\end{definition*}

\subsection*{Why consider the vertex set $P_N$ of a regular $N$-gon?}

The capacity ratio problem considers all the sets in $\bbR^2$. Why do we concentrate here on regular $N$-gons? Specifically, why do we only consider the vertex set?

The reason is that we want to find shapes that can beat the unit disk, which is symmetric under any angle rotation. The first example we can think of is the family of regular polygons, which are symmetric under certain angle rotations. From the following lemma, we only need to consider the boundary of $Q_N$.

\begin{lemma}[Bj\"{o}rck \protect{\cite[Theorem 3]{B56}}]
	When $r>0$, the support of any equilibrium measure of a compact set $K \subset \bbR^n$ ($n\ge 2$) is a subset of the boundary $\partial K$.
\end{lemma}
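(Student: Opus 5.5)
The plan is a first-variation argument. Suppose $\mu$ is an equilibrium measure for $K$ with $r>0$, so $\mu$ maximizes $I(\nu)=\iint |x-y|^r\,d\nu(x)\,d\nu(y)$ over probability measures $\nu$ on $K$. Suppose also that some interior point $x_0\in K\setminus\partial K$ lies in $\operatorname{supp}\mu$. I would construct a competitor with strictly larger energy. The cleanest tool is the potential $U^\mu(x)=\int |x-y|^r\,d\mu(y)$. The standard variational inequalities for a maximizer state that $U^\mu(x)\le I(\mu)$ for every $x\in K$ and $U^\mu = I(\mu)$ $\mu$-almost everywhere, hence on $\operatorname{supp}\mu$ by continuity of $U^\mu$, which holds because the kernel is continuous. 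I would derive these from $\frac{d}{d\epsilon}I((1-\epsilon)\mu+\epsilon\delta_x)\big|_{\epsilon=0}=2(U^\mu(x)-I(\mu))\le 0$, together with the fact that $\int U^\mu\,d\mu=I(\mu)$.

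Given that, $U^\mu$ attains its maximum over $K$ at the interior point $x_0$. So it suffices to show that $U^\mu$ cannot have a local maximum at an interior point of $K$. I would do this with a subharmonicity or convexity argument. For $r\ge 1$, each function $x\mapsto |x-y|^r$ is convex, so $U^\mu$ is convex. For $r\ge 2-n$, and in particular for all $r>0$ when $n\ge 2$, we have $\Delta |x-y|^r = r(r+n-2)|x-y|^{r-2}>0$ away from $y$. So $|x-y|^r$ is subharmonic on all of $\mathbb{R}^n$: it is continuous, and its singular point $y$ is a minimum, so the sub-mean-value property holds there trivially. Hence $U^\mu$ is subharmonic. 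It is in fact strictly subharmonic, since its distributional Laplacian is the positive measure $r(r+n-2)\,|\cdot|^{r-2}*\mu$, which is locally integrable when $n\ge 2$.

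The strong maximum principle then says that if $U^\mu$ attains its maximum at $x_0$ in a ball $B(x_0,\delta)\subset K$, it must be constant on that ball. That contradicts $\Delta U^\mu>0$ there. Concretely, the mean of $U^\mu$ over the sphere $\partial B(x_0,\rho)$ strictly exceeds $U^\mu(x_0)$: the spherical mean of $|x-y|^r$ strictly exceeds $|x_0-y|^r$ for each $y$, because the function is not harmonic on the ball. So some point $x\in B(x_0,\rho)\subset K$ has $U^\mu(x)>U^\mu(x_0)=I(\mu)$. This violates the inequality $U^\mu\le I(\mu)$ on $K$, so no interior point can lie in $\operatorname{supp}\mu$.

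The main obstacle is getting this strictness correctly. The hypothesis $n\ge 2$ enters exactly here: for $n=1$ and $0<r<1$ the kernel is concave off the diagonal, and indeed the interval's equilibrium measure has full support, consistent with \autoref{thm:positivity}. So the step where I verify $r(r+n-2)>0$ and justify the strict sub-mean-value inequality, including near $y=x_0$ where the kernel is not smooth, is the one I would handle carefully. I would handle it by computing the spherical mean directly for a single $y$. I would also need care with the derivation of the variational inequalities, which requires the existence of a maximizer by weak-$*$ compactness and continuity of the kernel.
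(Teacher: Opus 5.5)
The paper gives no proof of this lemma. It only quotes Bj\"orck \cite[Theorem 3]{B56}, so there is no internal argument to measure yours against. Your proof is correct and self-contained.

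\begin{itemize}
\item The first variation of $I((1-\epsilon)\mu+\epsilon\delta_x)$ at $\epsilon=0$ is $2(U^\mu(x)-I(\mu))$. This gives $U^\mu\le I(\mu)$ on $K$, with equality on $\operatorname{supp}\mu$ because $\{U^\mu<I(\mu)\}$ is relatively open and $\mu$-null.
\item The kernel $x\mapsto|x-y|^r$ has distributional Laplacian $r(r+n-2)|x-y|^{r-2}$. There is no point mass at $y$, since the flux through $\{|x-y|=\epsilon\}$ is $O(\epsilon^{r+n-2})\to0$. This density is strictly positive and locally integrable for $n\ge2$, $r>0$, so the kernel satisfies a strict sub-mean-value inequality.
\item Integrating that inequality against $\mu$ produces a point of $K$ where $U^\mu>I(\mu)$, which is the contradiction you need.
\end{itemize}

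Compared with the bare citation, your argument isolates the sign of $r(r+n-2)$ as the only place where $n\ge2$ enters. That also explains why the one-dimensional case $0<r<1$ of \autoref{thm:positivity} behaves the opposite way, with full support.

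Small points:
\begin{itemize}
\item The point with $U^\mu(x)>I(\mu)$ should be taken on the sphere $\partial B(x_0,\rho)$ with $\rho<\delta$, so that it lies in $K$. Saying it lies somewhere in the ball is not quite what the averaging gives you.
\item The convexity remark for $r\ge1$ is superfluous.
\item You can avoid the delicate near-diagonal strictness altogether. Strictness is only needed on a set of positive $\mu$-measure. If $\mu=\delta_{x_0}$ it has zero energy and is not a maximizer once $K$ has interior. Otherwise, shrink $\rho$ until $\mu$ charges $\{|y-x_0|>\rho\}$. For those $y$ the kernel is smooth near $\overline{B(x_0,\rho)}$, and the classical Green identity gives strict inequality. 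For the remaining $y$, plain subharmonicity is enough.
\end{itemize}
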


 When $r>2$, Bj\"{o}rck \cite[Theorem 12]{B56} further showed that the equilibrium measure is supported on at most $n+1$ extreme points of the convex hull. For regular $N$-gons in $\bbR^2$, the extreme points are the vertices and so:

\begin{lemma}\label{lm:3points}
	When $r>2$, the capacity of the vertex set $P_N$ and the regular $N$-gon $Q_N$ are equal, and any equilibrium measure is supported on at most $3$ vertices.
\end{lemma}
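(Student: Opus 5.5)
The statement follows from Bj\"orck's Theorem~12 once we check that the extreme points of the convex hull of $Q_N$ (equivalently of $P_N$) are exactly the $N$ vertices. The argument has three steps: apply Bj\"orck's result to $Q_N$, use monotonicity of capacity under inclusion, and transfer the conclusion about equilibrium measures to $P_N$.

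\emph{Step 1.} Fix $r>2$. Since $Q_N$ is the convex hull of $P_N$, and the points of $P_N$ lie on the unit circle in general convex position, the extreme points of $Q_N$ are precisely the points of $P_N$. By Bj\"orck \cite[Theorem 12]{B56} with $n=2$, every equilibrium measure $\mu$ of $Q_N$ is supported on at most $3$ extreme points. Hence $\mu$ is a probability measure on $P_N$ charging at most three vertices.

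\emph{Step 2.} Since $P_N\subset Q_N$, any probability measure on $P_N$ is also one on $Q_N$, so $U_r(P_N)\le U_r(Q_N)$. Conversely, the equilibrium measure $\mu$ of $Q_N$ from Step~1 is admissible for $P_N$, so $U_r(Q_N)=\iint|x-y|^r\,d\mu\,d\mu\le U_r(P_N)$. Therefore $U_r(P_N)=U_r(Q_N)$, and taking $1/r$-th powers gives $C_r(P_N)=C_r(Q_N)$.

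\emph{Step 3.} Let $\nu$ be any equilibrium measure of $P_N$. By Step~2 it attains $U_r(Q_N)$, so it is also an equilibrium measure of $Q_N$. Applying Bj\"orck's theorem again, $\nu$ is supported on at most three extreme points, that is, on at most $3$ vertices.

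The only point needing care is the hypothesis of Bj\"orck's theorem. Theorem~12 is stated for compact sets with $r>2$, and we apply it to $Q_N$ rather than to the finite set $P_N$ directly. This is why we route the argument through $Q_N$ and do not rely on a direct statement for finite sets. Beyond that, the proof is a direct citation combined with the monotonicity argument above.
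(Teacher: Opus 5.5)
Your proposal is correct and follows the paper's route: the paper obtains the lemma directly from Bj\"orck's Theorem~12 together with the observation that the extreme points of $Q_N$ are exactly the vertices, and your monotonicity argument in Step~2 simply makes explicit the capacity equality that the paper leaves implicit. The only quibble is with your closing remark: $P_N$ is itself compact, so Bj\"orck's theorem applies to it directly, and routing Step~3 through $Q_N$ is harmless but not needed.
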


It is numerically and theoretically tractable to compute capacity for finite sets when $r>0$ to get symmetry-breaking examples, as we do below for $P_N$. Notice that $Q_N$ must have greater or equal capacity ratio as its vertex set $P_N$, because they have the same capacity in the denominator ($r> 2$) by \autoref{lm:3points} but in the numerator ($0<s<2$), the capacity of $Q_N$ is greater than or equal to the capacity of its subset $P_N$. Therefore if the capacity ratio for $P_N$ exceeds that of the ball, then the same holds for $Q_N$.


\subsection*{Calculating capacity of $P_N$}
We first notice that when $0<r<2$, the capacity of $P_N$ is easily calculated by symmetry.

\begin{lemma}\label{lemma:equal}
	If $0<r<2$, then $P_N$ has energy $U_r(P_N) = \frac{1}{N} \sum_{k=1}^{N-1}|e^{2k\pi i/N}- 1|^r$, and the equilibrium measure is equally distributed on every vertex of $P_N$.
\end{lemma}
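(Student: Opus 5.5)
The plan is to combine two ingredients: the energy functional is concave on probability vectors when $0<r<2$, and $P_N$ is invariant under rotation by $2\pi/N$. For a finite set $\{x_1,\dots,x_N\}\subset\bbR^2$ and $0<r<2$, the kernel $|x|^r$ is conditionally negative definite. Equivalently, $-|x|^r$ is conditionally positive definite of order one, because $e^{-t|x|^r}$ is positive definite for $0<r\le 2$ by Schoenberg's theorem. Hence for every signed vector $v$ with $\sum_i v_i=0$ we have
\[
\sum_{i,j} v_i v_j |x_i-x_j|^r \le 0 .
\]
So the quadratic form $E(m)=\sum_{i\ne j} m_i m_j|x_i-x_j|^r$ is concave on the affine hyperplane $\{\sum m_i=1\}$, and in particular on the probability simplex.

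Given concavity, I would symmetrize. Let $\rho$ be the cyclic shift of indices induced by rotation by $2\pi/N$. Since rotation preserves distances in $P_N$, we have $E(m\circ\rho^k)=E(m)$ for every $k$. Take any maximizer $m$ and set $\bar m=\frac1N\sum_k m\circ\rho^k$, which is the uniform vector $(1/N,\dots,1/N)$. Concavity gives $E(\bar m)\ge \frac1N\sum_k E(m\circ\rho^k)=E(m)$, so the uniform measure is a maximizer. Its energy is
\[
\frac1{N^2}\sum_{j}\sum_{k\ne j}\bigl|e^{2\pi i k/N}-e^{2\pi i j/N}\bigr|^r=\frac1N\sum_{k=1}^{N-1}\bigl|e^{2k\pi i/N}-1\bigr|^r ,
\]
which is the stated formula.

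To show that the equilibrium measure is \emph{exactly} the uniform one, I need strict concavity: $\sum v_iv_j|x_i-x_j|^r<0$ for every nonzero $v$ with $\sum v_i=0$ and distinct points. This follows because $e^{-t|x|^r}$ is strictly positive definite for $0<r<2$, as it is the Fourier transform of a positive integrable stable density. Differentiating $\sum v_iv_j e^{-t|x_i-x_j|^r}>0$ at $t=0$ and using $\sum v_i=0$ gives only a non-strict inequality, so I would instead use the integral representation
\[
|x|^r=c_r\int_{\bbR^2}\bigl(1-\cos(\xi\cdot x)\bigr)|\xi|^{-2-r}\,d\xi .
\]
With it, $\sum v_iv_j|x_i-x_j|^r=-c_r\int \bigl|\sum_i v_ie^{i\xi\cdot x_i}\bigr|^2|\xi|^{-2-r}\,d\xi$, which is strictly negative because exponentials at distinct points are linearly independent. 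Once strict concavity holds, the maximizer is unique, and since the uniform vector is a maximizer it is the equilibrium measure.

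The main obstacle is this strict conditional negative definiteness, in particular the integral representation and its validity only for $0<r<2$. That range restriction is exactly why the case $r>2$ behaves differently, as \autoref{lm:3points} shows.
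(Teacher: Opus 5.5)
Your proof is correct, and it is more self-contained than the paper's. The paper's proof is two lines. It cites uniqueness of the equilibrium measure for $0<r<2$ (Bj\"orck, Theorem 4; Borodachov--Hardin--Saff, Theorem 4.4.8). It then notes that rotating this unique maximizer by $2\pi/N$ gives another maximizer, so the maximizer is rotation invariant and therefore uniform.

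You instead prove the uniqueness you need, for finite sets, by establishing strict conditional negative definiteness of $|x|^r$. You get this from the representation $|x|^r=c_r\int(1-\cos(\xi\cdot x))|\xi|^{-2-r}\,d\xi$, which is essentially the mechanism behind the cited theorem. You are right that differentiating the positive definiteness of $e^{-t|x|^r}$ at $t=0$ gives only a non-strict inequality.

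Your Jensen averaging step gives something the paper's route does not. It uses only non-strict concavity, so it shows the uniform measure is a maximizer for all $0<r\le 2$. This includes $r=2$, where uniqueness genuinely fails. That is consistent with \autoref{thm:r=2equilibrium}, since the uniform measure on $P_N$ has center of mass at the origin.

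For the lemma as stated, however, the averaging becomes redundant once you have strict concavity. Uniqueness together with invariance of $E$ under the cyclic shift already forces the maximizer to be uniform, exactly as in the paper.

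A cosmetic point: in $\sum_i v_i e^{i\xi\cdot x_i}$ you use $i$ both as the imaginary unit and as the summation index, so rename the index.
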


\begin{proof}
	The uniqueness of the equilibrium measure (see \cite[Theorem 4]{B56}, \cite[Theorem 4.4.8]{BHS19}) and the rotational invariance of $P_N$ imply that each point in $P_N$ carries mass $1/N$. The energy formula then follows easily. 
\end{proof}

When $r \ge 2$, we need to separately discuss the cases when $N$ is odd or even. The following theorem gives us more information of the case when $N$ is even. For even $N$ and $r>2$, it implies that every equilibrium measure of $P_N$ is supported on a pair of antipodal points.

\begin{theorem}[Building on \protect{\cite[Theorem 4.6.6]{BHS19}}]\label{thm:r=2equilibrium}
	Let $K = \{x_i\}_{i=1}^\ell$ be a finite subset of the closed unit disk. Let $\ell_1 = \# \{K \cap \bbS^1\}$ be the number of the $x_i$ lying on the unit circle. Assume $\ell_1 \ge 2$.
	
	Suppose $r=2$ and the origin is contained in the convex hull of $K \cap \bbS^1$. Then the energy $U_2(K) = 2$, and any measure supported on $K \cap \bbS^1$ with center of mass at the origin is an equilibrium measure. When $\ell_1 = 2$, each point in $K \cap \bbS^1$ has half the mass and the points are antipodal. When $\ell_1 = 3$, the equilibrium measure is unique. When $\ell_1>3$ and the origin lies in the interior of this convex hull, the equilibrium measures form an $(\ell_1-3)$-dimensional family.
		
	Suppose $r>2$ and $K\cap \bbS^1$ contains two antipodal points. Then $U_r(K) = 2^{r-1}$, and the measure with half its mass at each antipodal point is an equilibrium measure, and every equilibrium measure has that form.
\end{theorem}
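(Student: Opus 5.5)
The plan rests on one identity. For a probability measure $\mu$ on $K$ with barycenter $b=\int x\,d\mu$,
\[
\iint |x-y|^2\,d\mu(x)\,d\mu(y) = 2\int |x|^2\,d\mu(x) - 2|b|^2 \le 2 .
\]
The inequality holds because $K$ lies in the closed unit disk. Equality holds exactly when $\mu$ is supported on $K\cap\bbS^1$ and $b=0$.

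\emph{Case $r=2$.} Since the origin lies in the convex hull of $K\cap\bbS^1$, such measures exist, so $U_2(K)=2$. The equilibrium measures are precisely the probability measures on $K\cap\bbS^1$ with barycenter $0$. Describing this family is linear algebra on the weight vector $(m_1,\dots,m_{\ell_1})$. The conditions $\sum m_i=1$ and $\sum m_i x_i=0$ are three linear equations on the simplex.
\begin{itemize}
\item If $\ell_1=2$, the barycenter condition $m_1x_1+m_2x_2=0$ with unit vectors forces $x_2=-x_1$ and $m_1=m_2=1/2$.
\item If $\ell_1=3$, the origin cannot be a barycenter of two non-antipodal unit vectors. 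If two of the points are antipodal, the third point must get weight $0$; otherwise the three points are affinely independent and barycentric coordinates are unique. In either situation the measure is unique.
\item If $\ell_1>3$ and the origin is interior to the hull, the three equations are independent, because the points span $\bbR^2$ affinely. The solution set is therefore an affine space of dimension $\ell_1-3$. It meets the open simplex, since an interior point has strictly positive barycentric weights (perturb a Carathéodory representation). Hence the family is $(\ell_1-3)$-dimensional.
\end{itemize}

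\emph{Case $r>2$.} I would apply Hölder/Jensen. Since $t\mapsto t^{r/2}$ is convex and $|x-y|\le 2$,
\[
|x-y|^r = |x-y|^2\,|x-y|^{r-2}\le 2^{r-2}|x-y|^2 .
\]
Integrating against $\mu\otimes\mu$ and using the $r=2$ bound gives $U_r(K)\le 2^{r-2}\cdot 2=2^{r-1}$. The measure with mass $1/2$ at two antipodal points has energy $2\cdot\tfrac14\cdot 2^r=2^{r-1}$, so equality holds.

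For uniqueness of the form, equality forces both steps to be tight. First, the $r=2$ bound must be attained, so $\mu$ lives on $K\cap\bbS^1$ with barycenter $0$. Second, $\mu\otimes\mu$-a.e. pair $(x,y)$ must satisfy either $x=y$ or $|x-y|=2$, i.e.\ the points are equal or antipodal. Because $K$ is finite, every pair of distinct support points must be antipodal. A set of unit vectors that are pairwise antipodal has at most two elements, and the barycenter-zero condition then gives masses $1/2$ each; a single point is ruled out since its barycenter is not $0$.

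The main obstacle is bookkeeping in the $r=2$ dimension count, in particular justifying independence of the constraints and nonemptiness of the relative interior.
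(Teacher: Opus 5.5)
Your proposal is correct and follows essentially the same route as the paper: the barycenter identity $\iint|x-y|^2\,d\mu\,d\mu=2\int|x|^2\,d\mu-2|b|^2$ with its equality case, a rank count on the constraints $\sum m_i=1$, $\sum m_i x_i=0$ for the $r=2$ family, and, for $r>2$, the pointwise bound $|x-y|^{r-2}\le 2^{r-2}$ with its equality case. Two remarks are incidental. The antipodal subcase for $\ell_1=3$ is unnecessary, since three distinct points of the circle are never collinear. The appeal to convexity of $t\mapsto t^{r/2}$ is also not needed, because the $r>2$ bound uses only $|x-y|\le 2$.
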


\begin{proof}
Consider a measure $\mu$ on $K$ with $\mu(\{x_i\})=m_i \ge 0$. Let $b=(b_1, b_2)$ be the center of mass, i.e. $b = \sum_{i=1}^\ell m_i x_i.$ Then 
\begin{align*}
	\sum_{i=1}^\ell \sum_{j=1}^\ell m_i m_j x_i \cdot x_j 
	= b \cdot b
	= |b|^2.
\end{align*}
Hence when $r=2$, 
\begin{align}\label{eq:bound}
	\sum_{i=1}^\ell \sum_{j=1}^\ell m_i m_j |x_i - x_j|^2 = 2 \sum_{i=1}^\ell m_i |x_i|^2 - 2|b|^2 \le 2-2|b|^2 \le 2,
\end{align}
	where $``="$ holds if and only if $b=0$ and $\sum_{i=1}^\ell m_i |x_i|^2=1$. These conditions are satisfied for some $\mu$, since the origin is contained in the convex hull of the points in $K$ with $|x_i|=1$. Thus $U_2(K)=2$.
	
	We now aim to characterize all equilibrium measures when $r=2$. The maximal energy is attained if and only if the masses are all supported at points on the unit circle and the center of mass $b=0$. For these constraints, discard the interior points and relabel so that $\ell=\ell_1$. Writing $x_i = (x_{i,1}, x_{i,2})$, these constraints say
	\begin{align*}
		\sum_{i=1}^\ell m_i x_{i,1} = 0, \quad \sum_{i=1}^\ell m_i x_{i,2} = 0,\quad \sum_{i=1}^\ell m_i = 1
	\end{align*}
	forming a $3 \times (\ell+1)$ augmented matrix 
	\begin{align*}
	 \setlength{\arraycolsep}{6pt}{
		A = \begin{bmatrix}
			x_{1,1} & \cdots  & x_{\ell,1} & \vline & 0  \\
			x_{1,2} & \cdots & x_{\ell,2} & \vline & 0   \\
			1 & \cdots & 1 & \vline & 1 
		\end{bmatrix}.
		}
	\end{align*}
	
	When the masses are supported on $\ell=2$ points, $A$ has rank two because the first two rows are linearly dependent from $m_1 x_1 + m_2 x_2 = 0$. The third row is surely independent of them since the bottom right entry is nonzero. Since $x_1$ and $x_2$ are on the unit circle, the two points must be antipodal and the masses are $m_1 = m_2 = 1/2$. 
	
	When $\ell\ge 3$, the first two rows are linearly independent because a line meets the unit circle at at most two points. The third row is independent of them since the bottom right entry is nonzero. Thus the affine solution space has dimension $\ell-3$. Its intersection with the nonnegative orthant is precisely the family of equilibrium measures. Under the interior hypothesis this intersection contains a strictly positive vector, so it has the same dimension.
		
	Returning to all points of $K$, when $r>2$,
\begin{align*}
	\sum_{i=1}^\ell \sum_{j=1}^\ell m_i m_j |x_i - x_j|^r 
	&= \sum_{i=1}^\ell \sum_{j=1}^\ell m_i m_j |x_i - x_j|^{r-2} |x_i - x_j|^2 \\
	&\le 2^{r-2} \sum_{i=1}^\ell \sum_{j=1}^\ell m_i m_j |x_i - x_j|^2\\
	& \le 2^{r-1} \quad \text{by } \eqref{eq:bound}.
\end{align*}
	
We observe that when $K\cap\, \bbS^1$ contains two antipodal points $a$ and $-a$ (which have distance $|a-(-a)|=2$), equality holds when $m_a = 1/2, m_{-a} = 1/2$. It suffices to prove that no other masses attain equality. If ``='' holds for some other measure, then $\sum_{i=1}^\ell \sum_{j=1}^\ell m_i m_j |x_i - x_j|^2=2$ and $|x_i - x_j|=2$ whenever $i\ne j$ and $m_i$ and $m_j$ are positive. If there were three or more points on the unit circle with positive masses, then there would exist $x_i, x_j$ such that $|x_i - x_j| < 2$ and the energy would be less than $2^{r-1}$. Thus exactly two masses are positive and those points must be antipodal.
\end{proof}

 When $N$ is even and $r>2$, the equilibrium measure is supported on two antipodal points by \autoref{thm:r=2equilibrium}, and hence the capacity is easy to compute, as follows.
%

\begin{corollary}[$N$ even]
\label{cor:n_even}
	Let $P_N$ be the vertex set of a regular $N$-gon inscribed in the unit circle. Suppose $N$ is even. If $r=2$, then the energy $U_2(P_N) = 2$, and any measure on $P_N$ with center of mass at the origin is an equilibrium measure. If $r > 2$, then the energy $U_r(P_N) = 2^{r-1}$, and the equilibrium measure consists of two equal masses on diametral points of $P_N$
\end{corollary}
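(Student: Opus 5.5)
The plan is to obtain the corollary by specializing \autoref{thm:r=2equilibrium} to $K=P_N$, so the work is just checking its hypotheses. Every vertex of $P_N$ lies on the unit circle, so $K\cap\bbS^1=P_N$ and $\ell_1=N\ge 4$ (the corollary concerns even $N$ with $N\ge 3$, hence $N\ge4$). In particular $\ell_1\ge 2$.

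For $r=2$ we need the origin to lie in the convex hull of $P_N$. The uniform measure with mass $1/N$ at each vertex has center of mass $\frac1N\sum_{k=0}^{N-1}e^{2\pi i k/N}=0$, since this is a sum of all $N$th roots of unity. Hence $0\in\operatorname{conv}(P_N)$. \autoref{thm:r=2equilibrium} then gives $U_2(P_N)=2$, and shows that every measure on $P_N$ with center of mass at the origin is an equilibrium measure.

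For $r>2$ the hypothesis is that $P_N$ contains an antipodal pair. Since $N$ is even, $e^{i\pi}=-1$ is an $N$th root of unity. So for each vertex $e^{2\pi ik/N}$, the point $e^{2\pi i(k+N/2)/N}=-e^{2\pi ik/N}$ is also a vertex. The theorem then yields $U_r(P_N)=2^{r-1}$, and says every equilibrium measure puts mass $1/2$ on each of two antipodal (diametral) vertices.

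No step is a real obstacle; the only point needing care is the use of evenness to get $N/2\in\bbZ$, which is exactly what produces the antipodal pair in the $r>2$ case.
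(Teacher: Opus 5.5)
Your proposal is correct and matches the paper, which obtains the corollary directly by specializing \autoref{thm:r=2equilibrium} to $K=P_N$. For $r=2$ the origin lies in the convex hull of $P_N$ because the $N$th roots of unity sum to zero. For $r>2$ the evenness of $N$ supplies the antipodal pair $\pm e^{2\pi i k/N}$.
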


Recall that Bj\"orck \cite[Theorem 12]{B56} with $n=2$ says that when $r>2$, the equilibrium measure of a compact set $K\subset \bbR^2$ is supported on at most $3$ points. Thus another possible proof for \autoref{cor:n_even} is that if we already knew that the three points of $P_N$ (at which the equilibrium measure is supported) include two antipodal points, then  the triangle with vertices at the three points would be a right triangle. Therefore, $c^2 = a^2 + b^2$ where the triangle has side lengths $a\le b < c$. Since $r>2$, 
	\begin{align*}
		\left(\frac{a}{c}\right)^{\! r} + \left(\frac{b}{c}\right)^{\! r} < \left(\frac{a}{c}\right)^{\! 2} +\left(\frac{b}{c}\right)^{\! 2} = 1.
	\end{align*}
	Hence by \autoref{lemma:3point} below, we would know the masses are constrained on the two antipodal vertices.

	When $N$ is odd, we raise:
\begin{conjecture}[$N$ odd]\label{conj:n-gon}
	Let $r>2$ and $P_N$ be the vertex set of a regular $N$-gon inscribed in the unit circle ($N\ge 3$). Suppose $N$ is odd. Let $\theta_N = \frac{(N-1)\pi }{2N}$. Then the energy equals
	\begin{align*}
		U_r(P_N) = \frac{2^{r-1}\sin^r \theta_N}{1-2^{r-2}\cos^r \theta_N},
	\end{align*}
	and an equilibrium measure is supported on the three points $1, e^{2\theta_N i}, e^{2(\pi-\theta_N)i}$ (up to rotation), with masses $$\frac{2b^r - a^r}{4b^r - a^r}, \quad  \frac{b^r}{4b^r - a^r} , \quad \frac{b^r}{4b^r - a^r}$$
	respectively, where $a = 2\sin(\pi/N)$ and $b=c=2\sin\theta_N$.
	The formulas for the energy are explained in \autoref{sec:n-gon}.
\end{conjecture}

The conjectured energy and proposed maximizing measure agree with exhaustive triple comparisons on the plotting grids for odd $N\le 50$, using \autoref{lemma:3point} below. These finite checks do not establish uniqueness up to rotation. We have a partial proof of the conjecture, which we now describe.

\subsection{Our partially successful attempt to prove \autoref{conj:n-gon}}
\label{sec:n-gon}

In this section we attempt to prove that when $r>2$, an equilibrium measure of $P_N$ is supported on the three vertices given in \autoref{conj:n-gon}. We decompose the attempted proof into two steps. In the first step we handle the symmetric case when one point is fixed and the other two points are symmetric. This step is proved. In the second step we attempt to reduce the general case to the symmetric case. This step remains incomplete. 

Since the maximal masses of $P_N$ are supported on at most three points when $r>2$, we can use the following lemma to compute the energy for three points:

\begin{lemma}[Capacity of a three-point set; Clark and Laugesen \protect{\cite[Theorem 8]{CL25}}]\label{lemma:3point}
	Let $r>0$. If $T\subset \bbR^2$ is a three-point set with distances $a,b,c$ between its points, where $0 < a,b \le c$, then $T$ has energy
	\begin{equation*}
		U_r(T) = 
		\left\{
		\begin{array}{lr}
			 2^{-1}c^{r} & \text{if} \ a^r + b^r \le c^r, \\
			 \frac{2a^rb^rc^r}{4(ab)^r - (a^r+b^r-c^r)^2}  &\text{if} \ a^r + b^r > c^r.
		\end{array}\right.
	\end{equation*}
\end{lemma}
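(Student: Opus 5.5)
We will treat \autoref{lemma:3point} as a finite-dimensional quadratic maximization over the probability simplex. Label the points so that the side of length $c$ joins $x_1,x_2$, the side of length $b$ joins $x_1,x_3$, and the side of length $a$ joins $x_2,x_3$. Write $A=a^r$, $B=b^r$, $C=c^r$, so that $A,B\le C$. By \eqref{eq:discrete_energy},
\[
U_r(T)=\max_{m\in\Delta} m^{\top}Mm,\qquad M=\begin{bmatrix}0&C&B\\ C&0&A\\ B&A&0\end{bmatrix},
\]
where $\Delta$ is the probability simplex in $\bbR^3$. Since $\Delta$ is compact, a maximizer exists. It either lies on the boundary of $\Delta$, so that at most two masses are positive, or it is an interior critical point of the Lagrangian. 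We will compare these two possibilities.

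\emph{Boundary candidates.} On an edge of $\Delta$ only two points carry mass, and the computation done earlier for $\{0,1\}$ gives energy $\tfrac12 d^r$, where $d$ is their distance. The best edge is therefore the pair at distance $c$, with value $C/2$.

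\emph{Interior candidates.} An interior critical point satisfies $Mm=\lambda\mathbf 1$ and $\mathbf 1^{\top}m=1$. Its value is $m^{\top}Mm=\lambda$. We have $\det M=2ABC>0$, so $m=\lambda M^{-1}\mathbf 1$ and
\[
\lambda=\frac{1}{\mathbf 1^{\top}M^{-1}\mathbf 1}.
\]
We will compute $M^{-1}$ through its adjugate. The cofactor sums should give
\[
\mathbf 1^{\top}M^{-1}\mathbf 1=\frac{2AB+2BC+2CA-A^2-B^2-C^2}{2ABC}=\frac{4AB-(A+B-C)^2}{2ABC}.
\]
This yields exactly the claimed value $\lambda=\dfrac{2ABC}{4AB-(A+B-C)^2}$. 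We will also read off the components of $M^{-1}\mathbf 1$. Up to the positive factor $1/(2ABC)$, they are $A(B+C-A)$, $B(A+C-B)$ and $C(A+B-C)$, the last one being the mass at $x_3$, the vertex opposite the longest side. Because $A,B\le C$, the first two components are always positive. Hence the critical point lies in the interior of $\Delta$ precisely when $A+B>C$, provided the denominator $D=4AB-(A+B-C)^2$ is positive.

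\emph{Case $A+B>C$.} We first show $D>0$. Since $(\sqrt A+\sqrt B)^2>A+B>C$ and $C$ is the largest of the three, the numbers $\sqrt A,\sqrt B,\sqrt C$ satisfy the strict triangle inequalities. By the Heron-type factorization, $D$ is the product of the four factors $\pm\sqrt A\pm\sqrt B\pm\sqrt C$ with an even number of minus signs, so $D>0$.

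Next we show that the interior critical point is the global maximizer. The form $m^{\top}Mm$ restricted to the plane $\{v:\mathbf 1^{\top}v=0\}$ has discriminant equal to $-D$ up to a positive factor. I would verify this directly by parametrizing $v=(s,t,-s-t)$. So the restricted form is negative definite, the objective is strictly concave on $\Delta$, and the interior critical point is the unique maximizer. This gives the second formula.

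\emph{Case $A+B\le C$.} No critical point lies in the interior of $\Delta$, so the maximum is on the boundary, and by the boundary analysis it equals $C/2$. This gives the first formula. As a consistency check, at $A+B=C$ both formulas agree: there $D=4AB$, so the interior value is $2ABC/(4AB)=C/2$.

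\emph{Main obstacle.} I expect the main obstacle to be the bookkeeping: getting the signs in $M^{-1}\mathbf 1$ right, and confirming that the discriminant of the restricted form is $-D$. Once these identities are checked, everything else is routine.
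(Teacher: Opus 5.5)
Your proof is correct, apart from two small slips described below. The paper gives no proof of this lemma; it imports the result from Clark and Laugesen \cite[Theorem 8]{CL25}, so there is no in-text proof to compare with. Your route is the natural one and fits the paper's own machinery. The system $Mm=\lambda\mathbf 1$, $\mathbf 1^{\top}m=1$ is exactly the Lagrange system of the one-dimensional algorithm in \autoref{1dim-algo}. There the paper relies on the Positivity Theorem to know the maximizer is interior. For three points that positivity can fail, and your two ingredients take its place. The sign of the third weight $C(A+B-C)$ decides whether the critical point is interior. Negative definiteness on $\{\mathbf 1^{\top}v=0\}$ then makes that critical point the unique global maximizer. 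Indeed, the restricted form is $2\bigl(-Bs^2+(C-A-B)st-At^2\bigr)$, whose bracket has discriminant $(C-A-B)^2-4AB=-D$. The adjugate, $\det M=2ABC$, the weights $A(B+C-A)$, $B(A+C-B)$, $C(A+B-C)$, and the value $2ABC/D$ all check out. Nothing in the argument uses planarity, so it works for three points in any $\mathbb R^n$.

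First slip: your Heron description has the sign pattern wrong. The correct factorization is
\[
D=(\sqrt A+\sqrt B+\sqrt C)(-\sqrt A+\sqrt B+\sqrt C)(\sqrt A-\sqrt B+\sqrt C)(\sqrt A+\sqrt B-\sqrt C),
\]
whose factors have at most one minus sign. The product of the four factors with an even number of minus signs is $-D$. You can avoid square roots entirely. Since $C\ge\max(A,B)$, the case $A+B>C$ gives $0<A+B-C\le\min(A,B)$, so $(A+B-C)^2\le AB$ and $D\ge 3AB>0$.

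Second slip: in the case $A+B\le C$ you excluded interior critical points only under the proviso $D>0$. That proviso is automatic. If $D=0$ there is no critical point, since $\mathbf 1^{\top}m=\lambda\,\mathbf 1^{\top}M^{-1}\mathbf 1=0\neq 1$. If $D\neq 0$, an interior critical point has $m_1=\lambda A(B+C-A)/(2ABC)>0$, which forces $\lambda=2ABC/D>0$. Positivity of $m_3$ then forces $A+B>C$. Hence interior critical points exist exactly when $A+B>C$. Otherwise the maximum is the boundary value $C/2$, as you say.
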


In the first case, the equilibrium measure is supported at the two points that are distance $c$ apart. In the second case, the equilibrium measure is supported at all three points.

\subsection{Step 1 --- fully proved}

The following lemma shows that for $r>2$, the energy increases as the symmetric points move towards $(-1,0)$ within the stated angular range. Thus, in this symmetric case, it is better to choose the two vertices closest to $(-1,0)$.

\begin{figure}
\centering
\includegraphics[totalheight=7cm]{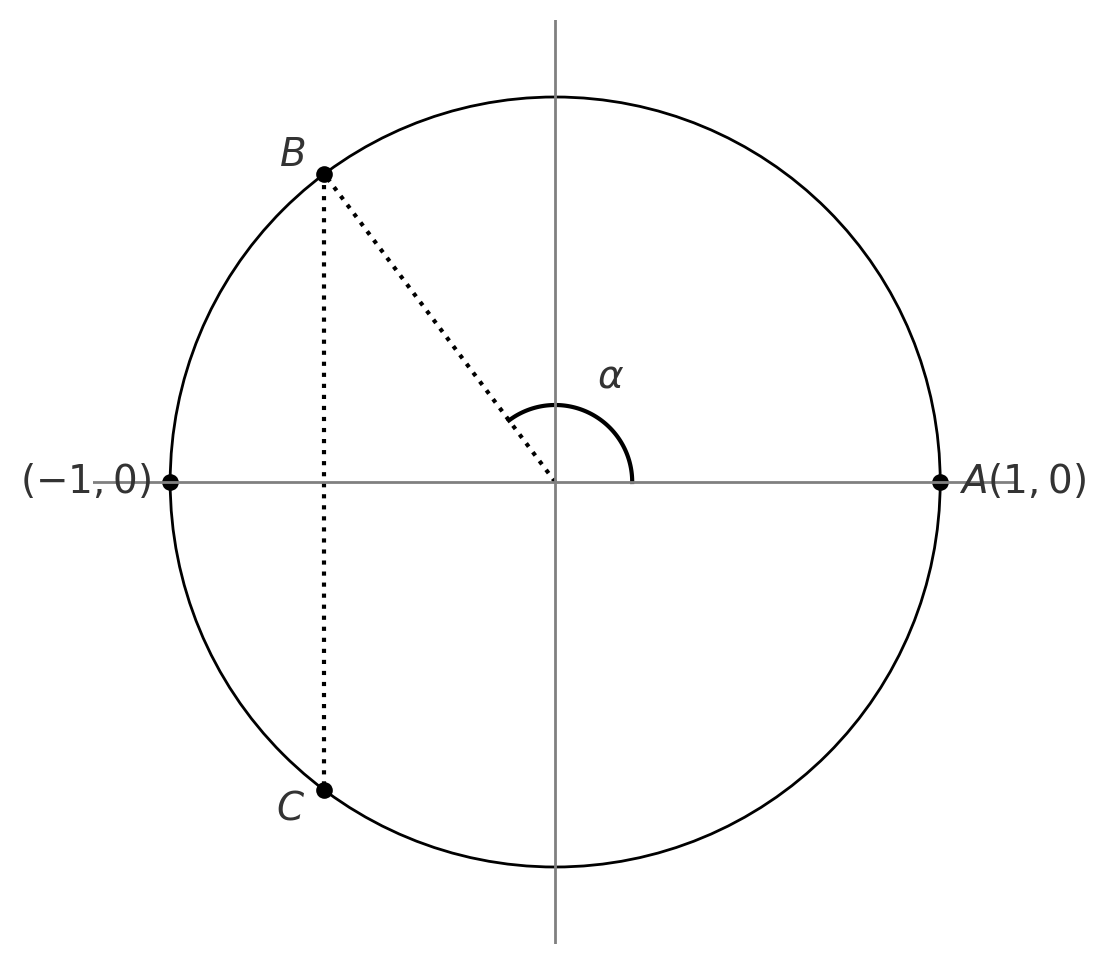}
	\caption{Fix $A=(1,0)$ and move $B$, $C$. Here $B$ and $C$ are symmetric across the horizontal axis. For $r\ge2$ and $2\pi/3\le\alpha\le\pi$, the capacity is nondecreasing as $B$ and $C$ move towards $(-1,0)$, and is strictly increasing for $r>2$.}
	\label{fig:3pt_1_}
\end{figure}

\begin{lemma}\label{lm:11.8}
	Suppose $r \ge 2$. Fix $A=(1,0)$ and let $B=e^{i\alpha}, C=e^{-i\alpha}$, where $\alpha\in[2\pi/3,\pi)$. The capacity of $\{A,B,C\}$ is nondecreasing in $\alpha$, and strictly increasing if $r>2$. The conclusion extends to $\alpha=\pi$ by continuity.
\end{lemma}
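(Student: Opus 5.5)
Compute the three distances explicitly and apply \autoref{lemma:3point}. With $A=1$, $B=e^{i\alpha}$, $C=e^{-i\alpha}$ we have
\[
|AB|=|AC|=2\sin(\alpha/2)=:b, \qquad |BC|=2\sin\alpha=:a .
\]
For $\alpha\in[2\pi/3,\pi)$ we have $\alpha/2\in[\pi/3,\pi/2)$, so $b\ge\sqrt3$ and $b\ge a$, since $a=2b\cos(\alpha/2)\le b$. Thus the largest distance is $c=b$ and the two smaller ones are $a\le b$. First check which branch of \autoref{lemma:3point} applies. The condition $a^r+b^r>c^r$ reduces to $a^r>0$, which holds for $\alpha<\pi$. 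So we are always in the three-point branch, which gives
\[
U_r=\frac{2a^r b^{2r}}{4a^rb^r-a^{2r}}=\frac{2b^{2r}}{4b^r-a^r}.
\]
This matches the energy formula in \autoref{conj:n-gon} after dividing through. It remains to show that
\[
F(\alpha)=\frac{2b^{2r}}{4b^r-a^r}
\]
is nondecreasing in $\alpha$ (strictly for $r>2$). Since $C_r=U_r^{1/r}$, monotonicity of capacity is the same as monotonicity of $F$.

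Next, simplify using $a=2b\cos(\alpha/2)$. Put $t=\cos(\alpha/2)\in(0,1/2]$, so $a^r=2^rb^rt^r$ and
\[
F=\frac{2b^{r}}{4-2^r t^r}=\frac{2^{r+1}(1-t^2)^{r/2}}{4-2^rt^r}.
\]
As $\alpha$ increases, $t$ decreases, so it suffices to show that
\[
G(t)=\log F=\tfrac r2\log(1-t^2)-\log(4-(2t)^r)+\text{const}
\]
is nonincreasing on $(0,1/2]$. Differentiating,
\[
G'(t)=-\frac{rt}{1-t^2}+\frac{r2^rt^{r-1}}{4-(2t)^r},
\]
so $G'\le0$ is equivalent to $2^rt^{r-2}(1-t^2)\le 4-(2t)^r$. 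Writing $u=2t\in(0,1]$, this becomes
\[
4u^{r-2}-u^r\le 4-u^r, \qquad\text{i.e.}\qquad u^{r-2}\le1 .
\]
This holds for $u\le1$ and $r\ge2$, with strict inequality for $u<1$ and $r>2$. That gives nondecreasing for $r\ge2$ and strictly increasing for $r>2$ on $\alpha\in(2\pi/3,\pi)$. At the endpoint $\alpha=2\pi/3$ (so $u=1$) only one point has zero derivative, which does not affect strict monotonicity. Finally, the case $\alpha=\pi$ follows from continuity of $F$, or of the two-branch formula, since at $\alpha=\pi$ we get $a=0$ and the points $B=C$ merge.

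The main obstacle is keeping the algebra in a clean form. The substitution $u=2\cos(\alpha/2)$, which rewrites the condition $a^r+b^r>c^r$ and the energy in terms of a single variable, is the step I expect to make everything collapse to $u^{r-2}\le 1$. The other point to check is that no branch switch in \autoref{lemma:3point} occurs inside the interval.
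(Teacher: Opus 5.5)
Your proof is correct and is essentially the paper's argument: the same side lengths $a=2\sin\alpha$ and $b=c=2\sin(\alpha/2)$, the same three-point branch of \autoref{lemma:3point}, and a derivative whose sign reduces to the same factor $1-\bigl(2\cos(\alpha/2)\bigr)^{r-2}$. The only difference is cosmetic: you differentiate $\log F$ in $t=\cos(\alpha/2)$ rather than $U_r$ directly in $\alpha$. Your handling of the endpoint $\alpha=2\pi/3$ and of continuity at $\alpha=\pi$ is fine.
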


\begin{proof}
	For $2\pi/3\le\alpha<\pi$, $AB$ and $AC$ are longest sides of the triangle $ABC$. Moreover, we can compute
	\begin{align*}
		AB = AC = 2\sin (\alpha/2), \quad BC = 2\sin{\alpha}.
	\end{align*}
	By \autoref{lemma:3point}, with $a=2\sin\alpha, b=c=2\sin{(\alpha/2)}$, we find the equilibrium measure is supported on all three points, and the energy is given by
	\begin{align*}
		U_r(\alpha) 
		= \frac{2^{r-1} \sin^r(\alpha/2)}{1 - 2^{r-2} \cos^r(\alpha/2)}.
	\end{align*}
	The derivative is
	\begin{align*}
		U_r'(\alpha) = \frac{2^{r-2}r \sin^{r-1}(\alpha/2) \cos(\alpha/2)}{\left(1-2^{r-2}\cos^r(\alpha/2)\right)^2}\left[1-\bigl(2\cos(\alpha/2)\bigr)^{r-2}\right].
	\end{align*}
	This is positive when $r>2$ and $2\pi/3<\alpha<\pi$. For $r=2$, the energy is constantly $2$. At $\alpha=\pi$, the formula extends to the two-point energy $2^{r-1}$.
\end{proof}

\subsection{Step 2 --- not proved}

In Step 2 we attempt to reduce a general triangle to an isosceles triangle with two equal longest sides, to which Step 1 applies.

\begin{figure}
\centering
\includegraphics[totalheight=7cm]{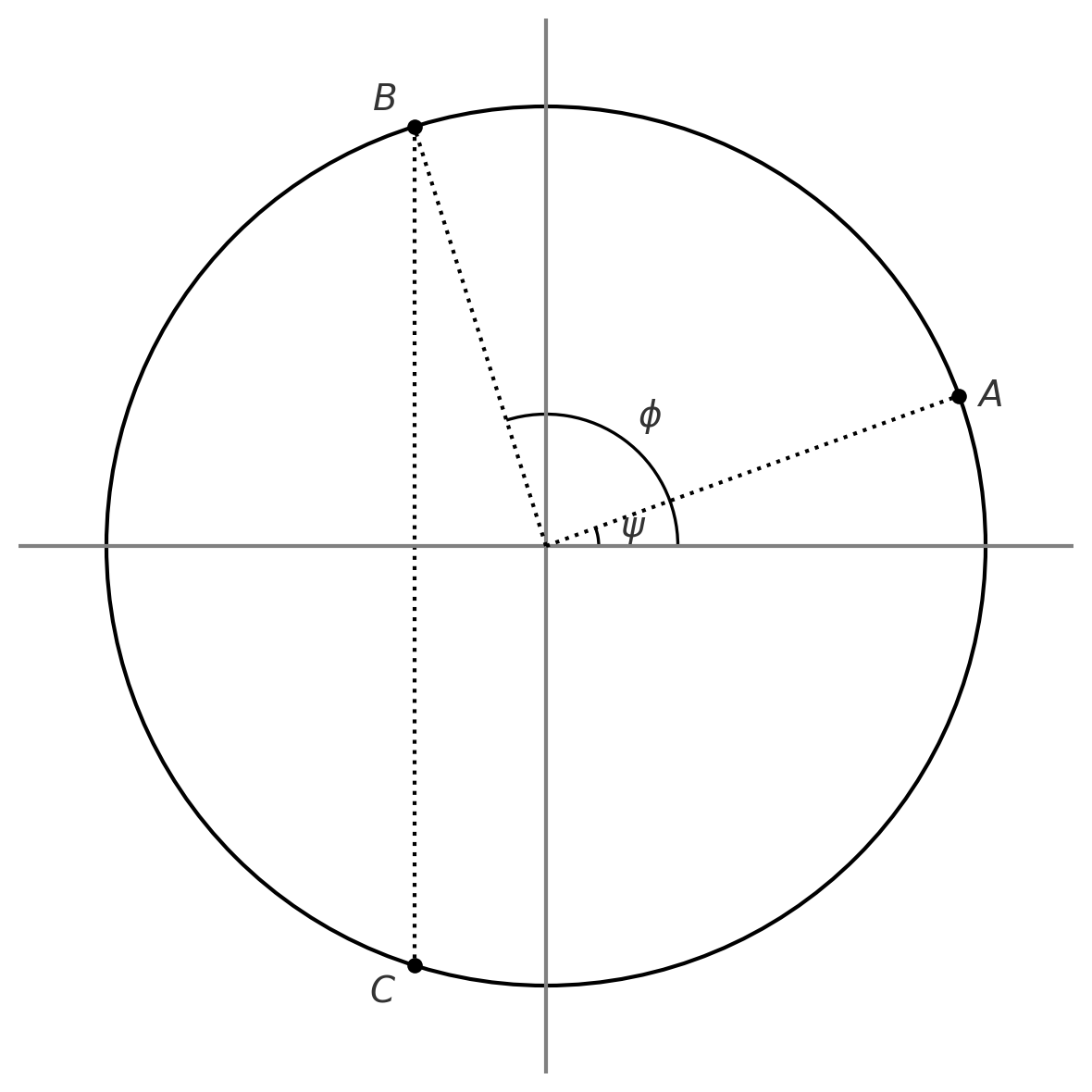}
	\caption{Fix $B, C$ to be symmetric across the horizontal axis, where $BC$ is required to be the longest edge in the triangle $\Delta ABC$. The angle of $B$ is $\phi \in (\pi/2, 2\pi/3]$. Move $A$ on the arc $\psi \in [0, 2\pi - 3\phi]$. For $r\ge 2$, our conjecture is that when $\psi = 2\pi - 3\phi$, the $r$-capacity of the 3-point set $\{A,B,C\}$ is maximal.}
	\label{fig:3pt_1}
\end{figure}

\begin{conjecture}\label{conj:bc_a}
Suppose $r \ge 2$. Let $A, B, C$ be three points on the unit circle. 
Fix $B$ and $C$ such that they are symmetric with respect to the $x$-axis, 
having polar angles $\phi$ and $-\phi$, respectively, where $\phi \in (\pi/2, 2\pi/3]$.
Move $A$ along the circle with polar angle $\psi \in [0, 2\pi - 3\phi]$, 
so that $BC$ remains the longest side of the triangle $\triangle ABC$. 
Then the capacity of the three-point set $\{A, B, C\}$ 
is maximized when $\psi = 2\pi - 3\phi$, i.e., when $AC = BC$.\label{conj:3pt}
\end{conjecture}

\autoref{conj:n-gon} would follow from applying \autoref{conj:bc_a} and then \autoref{lm:11.8}: obtuse or right triangles reduce to the two-point case, while an acute vertex triangle can be placed in the Step 2 parametrization, whose maximizing endpoint is still a vertex triangle of $P_N$.

Some numerical evidence for \autoref{conj:bc_a} is below. In the plotted examples with $r>2$, the capacity is constant on the two-point branch and increasing on the three-point branch. In both cases, it is largest at the right endpoint of the $\psi$-interval.

\begin{figure}
  \centering
  \begin{minipage}[b]{0.49\textwidth}
    \vspace{0pt}
    \centering
    \includegraphics[width=\textwidth]{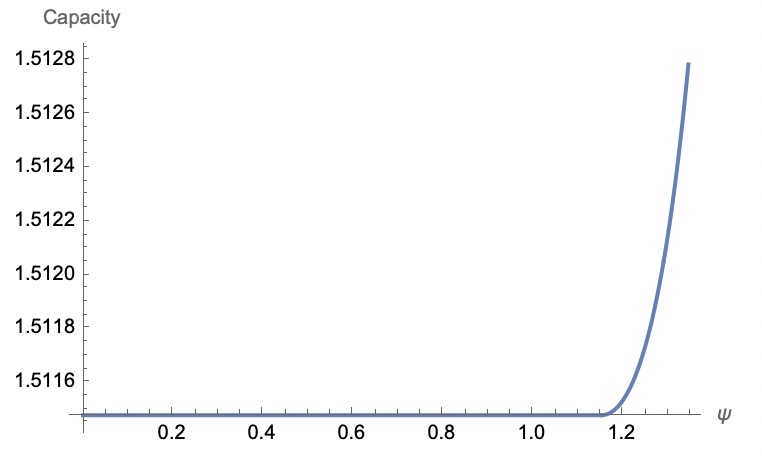}
    \caption{Capacity vs $\psi$, when $\phi=11\pi/21$ is close to $\pi/2$ and $r=2.5$.}
    \label{fig:abc1}
  \end{minipage}
  \hfill
  \begin{minipage}[b]{0.49\textwidth}
    \vspace{0pt}
    \centering
    \includegraphics[width=\textwidth]{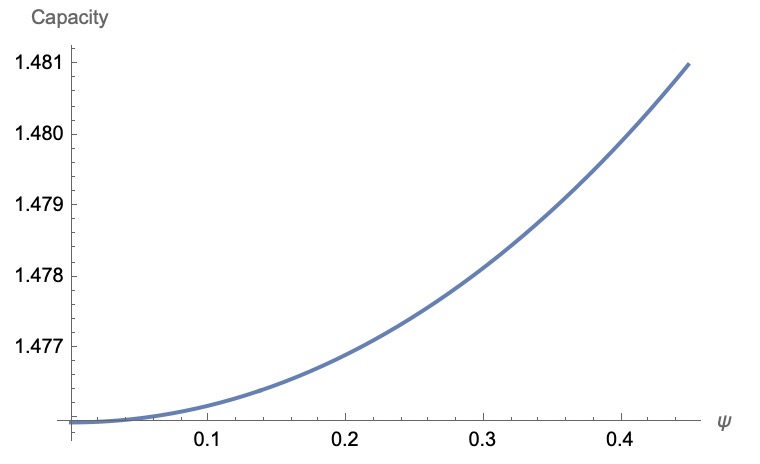}
    \caption{Capacity vs $\psi$, when $\phi=13\pi/21$ is close to $2\pi/3$ and $r=2.5$.}
    \label{fig:abc2}
  \end{minipage}
\end{figure}

\FloatBarrier 
\subsubsection{Unsuccessful attempt to prove \autoref{conj:3pt}}
We do not have definite conclusions, but if someone in future wants to work on this problem, we hope that the remainder of this section might be useful.

Our goal is to prove that the capacity in \autoref{conj:3pt} is a nondecreasing function in $\psi$. Notice that the capacity is a piecewise function.
A key question is determining when to use two points versus three points to compute the capacity of a 3-point set, i.e. which case of \autoref{lemma:3point} applies. Assume $BC$ is the longest sidelength in the triangle. Notice that if $AB^r + AC^r - BC^r \le 0$ then \autoref{lemma:3point} tells us to use two points and if $AB^r + AC^r - BC^r>0$ then to use three points.

\paragraph{Numerical observation} The switch quantity $AB^r + AC^r - BC^r$ need not be monotone in $\psi$. Numerically, once it becomes positive, it remains positive for all larger $\psi$ in the stated interval. At the right endpoint $AC=BC$, so the quantity equals $AB^r>0$.

\begin{figure}
  \centering
  \begin{minipage}[b]{0.49\textwidth}
    \centering
    \includegraphics[width=\textwidth]{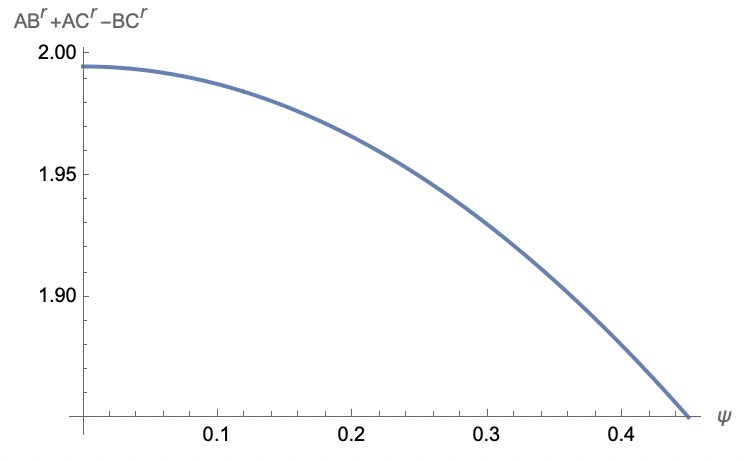}
    \caption{$AB^r + AC^r - BC^r$ vs $\psi$ when $\phi=13\pi/21$ and $r=2$.}
	\label{fig:abc_1}
  \end{minipage}
  \hfill
  \begin{minipage}[b]{0.49\textwidth}
    \centering
    \includegraphics[width=\textwidth]{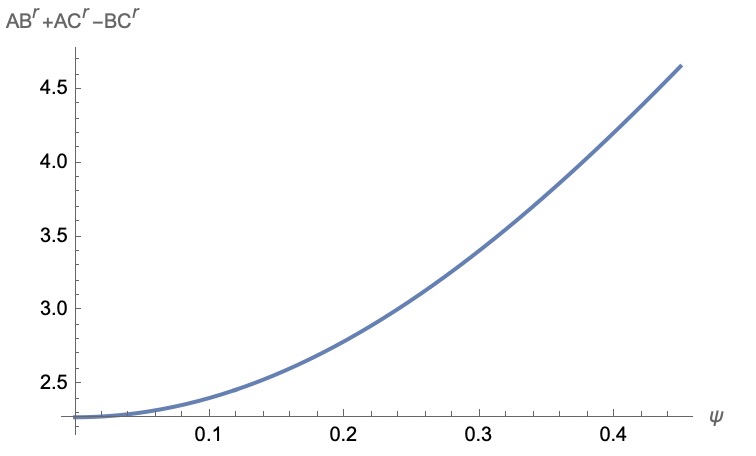}
    \caption{$AB^r + AC^r - BC^r$ vs $\psi$ when $\phi=13\pi/21$ and $r=5$.}
	\label{fig:abc_2}
  \end{minipage}
\end{figure}

\begin{figure}
  \centering
  \begin{minipage}[b]{0.49\textwidth}
    \centering
    \includegraphics[width=\textwidth]{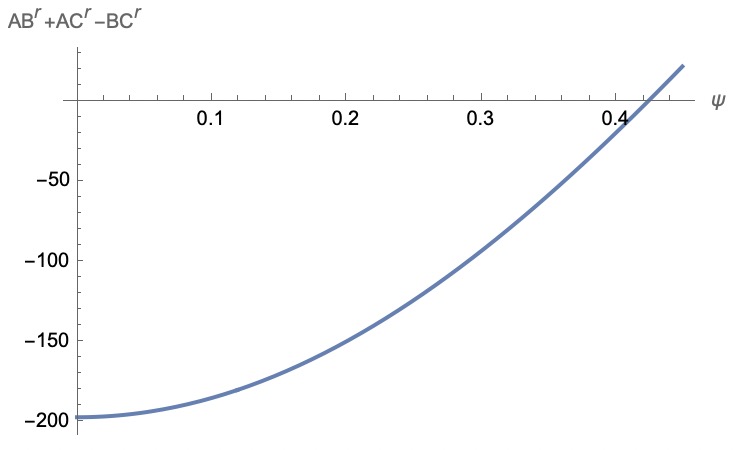}
    \caption{$AB^r + AC^r - BC^r$ vs $\psi$ when $\phi=13\pi/21$ and $r=10$.}
	\label{fig:abc_3}
  \end{minipage}
  \hfill
  \begin{minipage}[b]{0.49\textwidth}
    \centering
    \includegraphics[width=\textwidth]{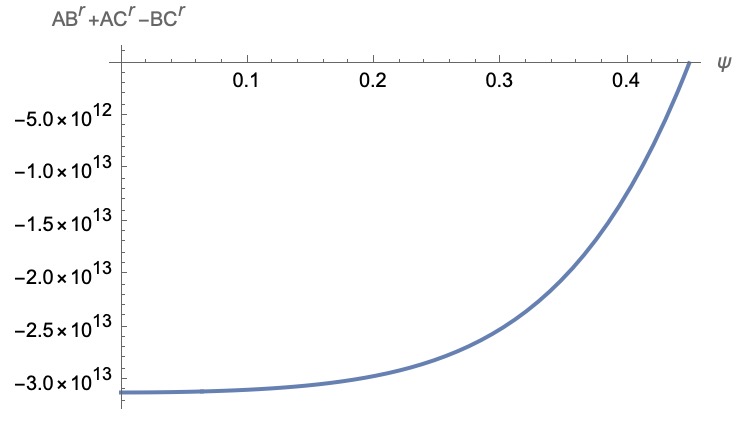}
    \caption{$AB^r + AC^r - BC^r$ vs $\psi$ when $\phi=13\pi/21$ and $r=50$.}
	\label{fig:abc_4}
  \end{minipage}
\end{figure}

\paragraph{Some computation} We first express the sidelengths in \autoref{fig:3pt_1} with $\phi$ and $\psi$:
$$BC = 2\sin\phi, \quad AB = 2\sin\frac{\phi - \psi}{2}, \quad AC = 2\sin \frac{\phi + \psi}{2}.$$

The quantity $AB^r + AC^r - BC^r$ is a function of $\psi$:
\begin{align*}
	l(\psi) = AB^r + AC^r - BC^r = 2^r \sin^r \frac{\phi - \psi}{2} + 2^r\sin^r \frac{\phi + \psi}{2} - 2^r\sin^r\phi.
\end{align*}

The derivative is
\begin{align*}
	l'(\psi) = 2^{r-1} r \left( \sin^{r-1}\frac{\phi + \psi}{2} \cos \frac{\phi+\psi}{2} - \sin^{r-1}\frac{\phi - \psi}{2} \cos \frac{\phi - \psi}{2}\right).
\end{align*}

Notice that $(\phi - \psi)/2 < (\phi + \psi)/2$ when $\psi >0,$ because
\begin{align*}
	\phi - \psi \in [4\phi - 2\pi, \phi] \subset [0,\phi],
	\quad
	\phi + \psi \in [\phi, 2\pi - 2\phi] \subset [\phi, \pi].
\end{align*}

Let $g(\theta) = \sin^{r-1}\theta \cos \theta$. The derivative can be rewritten as 
\begin{align*}
	l'(\psi) = 2^{r-1} r \left( g\left(\frac{\phi + \psi}{2}\right) - g\left(\frac{\phi - \psi}{2}\right) \right).
\end{align*}

On the relevant interval $(0,\pi/2)$,
\[
g'(\theta)
= \sin^{r-2}\theta\bigl(r\cos^2\theta-1\bigr).
\]
Thus $g'(\theta)=0$ at
$\theta^*=\arccos(1/\sqrt{r})$.
The function $g$ increases on $(0,\theta^*)$
and decreases on $(\theta^*,\pi/2)$.

\subsection{Capacity ratios}
Our ultimate objective is to identify the shape that maximizes the capacity ratio.
Thus far, we have analyzed the capacity of the regular polygon vertex set $P_N$. We now compare the capacity ratios of a disk and of $P_N$ for various parameter choices $s$ and $r$.

\subsection{Known results and conjectures}

Clark and Laugesen \cite[Theorem 10]{CL25} have solved the case $s\ge 2$ (equivalently $q\le -2$; see \autoref{fig:pqdiagram2D}), so we focus on $0<s<2$.
In the subregion $0<r<s<2$ (the lower triangle of the capacity--ratio in \autoref{fig:pqdiagram2D}), they conjectured that two-point sets are maximal.
Our subsequent numerical computations support this conjecture.

\begin{conjecture}[Two-point conjecture, see Clark and Laugesen \protect{\cite[Conjecture 16]{CL25}}]
	Suppose $n \ge 2$ and $0<r<s<2$. Among compact sets
$K \subset \mathbb{R}^n$ containing more than one point, any set with
exactly two points maximizes $\caps(K)/\capr(K)$.
\end{conjecture}

Notice that two-point sets are not the only potential maximizers. Other sets can be maximizers if they have the same underlying measure.

In the upper triangle of the figure, where $0<s<2$ and $r>2$, Clark and Laugesen found the symmetry--breaking phenomenon: in the yellow region of \autoref{fig:pqdiagram2D}, the regular three-point set has a larger capacity ratio than the ball. 

\begin{theorem}[Symmetry breaking, see Clark and Laugesen \protect{\cite[Theorem 7]{CL25}}]
Let $q_* \simeq -0.856$ satisfy $2\sqrt{\pi}\,\Gamma\!\left(1-\frac{q_*}{2}\right)
= 3\,\Gamma\!\left(\frac{1-q_*}{2}\right)$.
If
\[
-\infty < p <
\frac{ q \log(4/3) }{
  \log\!\left(
    \dfrac{2\sqrt{\pi}\,\Gamma\!\left(1-\frac{q}{2}\right)}
          {3\,\Gamma\!\left(\frac{1-q}{2}\right)}
  \right)
}
\quad\text{and}\quad
-2 < q < q_*,
\]
then the ratio $\operatorname{Cap}_q(K)/\operatorname{Cap}_p(K)$ is less for $K$ a ball than when $K=P_3$ is the vertex set of an equilateral triangle.
\end{theorem}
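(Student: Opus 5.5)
The plan is to compute both capacity ratios in closed form and reduce the claimed inequality to the stated condition on $p$. Throughout, write $r=-p$ and $s=-q$, so that $0<s<2$ and the hypothesis $q<q_*$ means $s>-q_*\simeq 0.856$.

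First, the equilateral triangle. The vertex set $P_3$ has all three distances equal to $c=\sqrt3$. Then $a^r+b^r=2c^r>c^r$, so the second case of \autoref{lemma:3point} applies for every $r>0$. It gives
\[
U_r(P_3)=\frac{2c^{3r}}{4c^{2r}-c^{2r}}=\tfrac23\,3^{r/2},
\qquad
C_r(P_3)=\sqrt3\,(2/3)^{1/r}.
\]
This is also what \autoref{lemma:equal} gives for $r<2$, by symmetry. Hence
\[
\frac{C_s(P_3)}{C_r(P_3)}=\Bigl(\tfrac23\Bigr)^{1/s-1/r}.
\]

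Next, the disk. Set $X(s)=\dfrac{2\sqrt\pi\,\Gamma(1+s/2)}{3\,\Gamma((1+s)/2)}$, so that $G(s):=\dfrac{\Gamma((1+s)/2)}{\sqrt\pi\,\Gamma(1+s/2)}=\dfrac{2}{3X(s)}$. Provided $r\ge2$, \eqref{eq:capacity_B2} gives the disk ratio $2^{1/r}G(s)^{1/s}$.

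With this, the desired strict inequality $C_s(P_3)/C_r(P_3)>2^{1/r}G^{1/s}$ rearranges, after taking logarithms, to
\[
\frac1r\log\tfrac34>\frac1s\log\tfrac{3G}{2}=-\frac1s\log X,
\qquad\text{that is,}\qquad
\frac{\log X(s)}{s}>\frac{\log(4/3)}{r}.
\]
On $0<s<2$ the condition $q<q_*$ is equivalent to $X(s)>1$, since $q_*$ is exactly the root of $X=1$. I will check this by showing $\log X$ is monotone in $s$, using the log-convexity of $\Gamma$ or the digamma function. Once $\log X>0$, the last inequality is equivalent to $r>s\log(4/3)/\log X$, which is precisely
\[
p<\frac{q\log(4/3)}{\log X}.
\]

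The step I expect to be the main obstacle is justifying the disk formula, that is, that the hypothesis forces $r\ge2$. This amounts to showing $s\log(4/3)/\log X(s)\ge2$, or equivalently $X(s)\le(4/3)^{s/2}$ on $(-q_*,2)$. At $s=2$ both sides equal $4/3$, because $\Gamma(2)=1$ and $\Gamma(3/2)=\sqrt\pi/2$.

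I would first try to prove that $h(s)=\log X(s)-\tfrac s2\log\tfrac43$ is convex on $(0,2]$. Its second derivative is $\tfrac14\bigl(\psi'(1+s/2)-\psi'((1+s)/2)\bigr)$, which is negative because the trigamma function $\psi'$ is decreasing, so $h$ is in fact concave rather than convex. I would therefore argue via concavity instead. One has $h(2)=0$, and at the other end $h(s_0)<0$ with $s_0=-q_*$, since $X(s_0)=1$. A concave function vanishing at $s=2$ with $h'(2)>0$ then stays negative to the left of $2$, and $h'(2)>0$ reduces to the numerical check $\tfrac12\bigl(\psi(2)-\psi(3/2)\bigr)>\tfrac12\log\tfrac43$, where $\psi(2)-\psi(3/2)=2-2\log2\approx0.614$ exceeds $\log\tfrac43\approx0.288$. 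Then $h<0$ on $[s_0,2)$, which gives $r>2$.

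If this argument fails somewhere, the fallback is to treat directly the case where the bound on $p$ exceeds $-2$. In that case I would use the second line of \eqref{eq:capacity_B2} for $C_r$ of the disk, and note that the disk ratio is continuous across $r=2$.
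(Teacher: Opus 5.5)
Your proof is correct and follows essentially the paper's route. The paper only cites this theorem from Clark and Laugesen, but its odd-$N$ comparison leading to \eqref{ineq:region}, specialized to $N=3$ (where $\theta_3=\pi/3$ and $f(r)=(3/4)^{1/r}$), reduces to exactly your inequality $\log X(s)/s>\log(4/3)/r$. Your concavity argument for $h$ adds the check that the threshold on $p$ lies strictly below $-2$, so the $r\ge 2$ disk formula applies; the paper instead assumes $r\ge 2$ from the start.

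There is one arithmetic slip. In fact $\psi(2)-\psi(3/2)=2\log 2-1\approx 0.386$, not $2-2\log 2$. This value still exceeds $\log(4/3)\approx 0.288$, so $h'(2)>0$. The tangent-line bound $h(s)\le h'(2)(s-2)$ then still gives $h<0$ on $(0,2)$, and your conclusion stands.
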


This theorem directly motivates our focus on vertex sets of regular polygons.

\subsection{Our results on $P_N$}
For a theoretical result, we compare the capacity ratios of the regular polygon vertex set $P_N$ and the disk when $r \ge 2$ and $0 < s < 2$, and $N$ is odd.

The capacity ratio of a disk $D$ is 
\begin{align}\label{eq:cap_disk}
	\frac{\caps(D)}{\capr(D)} = 2^{1/r} \left(\frac{\Gamma((1+s)/2)}{\sqrt{\pi} \Gamma(1+s/2)} \right)^{\! 1/s}.
\end{align}
by the formula at the beginning of the section.

When $0 < s < 2$, the equilibrium measure of $P_N$ is equally distributed on every vertex from \autoref{lemma:equal}. For $r>2$, we assume \autoref{conj:n-gon} identifies a maximizing triple; at $r=2$, we use the center-of-mass result in \autoref{thm:r=2equilibrium}. Hence the capacity ratio is 
\begin{align*}
	\frac{\caps(P_N)}{\capr(P_N)}
	&= \frac{ \left( \frac{1}{N} \sum_{k=1}^{N-1}|e^{2k\pi i/N}- 1|^s \right)^{\! 1/s}}{\left(\frac{2^{r-1}\sin^r \theta}{1-2^{r-2}\cos^r \theta}\right)^{\! 1/r}}\\
	&= \frac{2^{1/r+1/s} \left(1-2^{r-2}\cos^r \theta\right)^{\! 1/r} \left(\sum_{k=1}^{(N-1)/2} \sin^s(k \pi / N)\right)^{\! 1/s} }{N^{1/s} \sin\theta},\\
\end{align*}
where $\theta = \frac{(N-1)\pi }{2N} \ge \pi/3$.
Therefore, the capacity ratio of $P_N$ is at least as large as that of disk $D$ when
\begin{align*}
	\frac{2^{1/r+1/s} \left(1-2^{r-2}\cos^r \theta\right)^{\! 1/r} \left(\sum_{k=1}^{(N-1)/2} \sin^s(k \pi / N)\right)^{\! 1/s} }{N^{1/s} \sin\theta} \ge 2^{1/r} \left(\frac{\Gamma((1+s)/2)}{\sqrt{\pi} \Gamma(1+s/2)} \right)^{\! 1/s},
\end{align*} 
i.e., when
\begin{align*}
	f(r) = \left(1-2^{r-2}\cos^r \theta\right)^{\! 1/r} \ge \left( \frac{N\Gamma((1+s)/2)}{2\sqrt{\pi} \Gamma(1+s/2) \sum_{k=1}^{(N-1)/2} \sin^s(k \pi / N)} \right)^{\!\! 1/s}\sin\theta.
\end{align*}

We claim that the function $f(r) = \left(1-2^{r-2}\cos^r \theta\right)^{\! 1/r}$ is increasing. Notice $f(r) = p_3((p_2 \circ p_1)(r), r)$ where $p_1(r)=(2 \cos\theta)^r, p_2(x)=1-x/4, p_3(x,r)=x^{1/r}$. Clearly $p_1(r)$ is nonincreasing in $r$ because $0<2\cos\theta\le1$, and $p_2(x)$ is decreasing and $p_3(x,r)$ is increasing with respect to both $x\in (0,1)$ and $r>0$. Hence the composition $f(r)$ is increasing, with range $f([2,\infty))=[\sin\theta,1)$. When the argument below lies in this range, the polygon ratio is at least as large as the disk ratio precisely when
\begin{align} \label{ineq:region}
	r \ge f^{-1} \left( \left( \frac{N\Gamma((1+s)/2)}{2\sqrt{\pi} \Gamma(1+s/2) \sum_{k=1}^{(N-1)/2} \sin^s(k \pi / N)} \right)^{\! 1/s}\sin\theta \right).
\end{align}
If the argument is at least $1$, no finite $r\ge2$ gives a polygon ratio as large as the disk ratio. These comparisons remain conditional on \autoref{conj:n-gon} for $r>2$. We use the equality in this formula to create plots in \autoref{sec:intersection}.

\subsection{Numerical evidence}

\subsection{Algorithm}\label{2dim:algo} We compare the capacity ratios of the disk and regular polygon vertex sets $P_N$ among the plotted competitors. For either positive exponent, we use uniform masses below $2$ and the center-of-mass result at $2$. Above $2$, we use the antipodal formula for even $N$; for odd $N$, Bj\"orck's proved at-most-three-point support bound allows exhaustive evaluation of all vertex triples using \autoref{lemma:3point}. We use the closed formula of \autoref{conj:n-gon}, which specifies the maximizing triple, only after checking it numerically on the exact plotting grid. These finite checks do not prove the conjecture. Disk capacities are computed from the explicit formulas above.

\subsection{Observation when $0<r,s < 2$}\label{sec:2d0to2}

When $0<r,s < 2$, we notice that among the $P_N$ and the disk, the maximal shape $K$ for the capacity ratio $\caps(K)/\capr(K)$ is a disk when $r>s$ and a two-point set when $r<s$. See \autoref{fig:cr_2d_02}.

\begin{figure}
\centering
\includegraphics[totalheight=10cm]{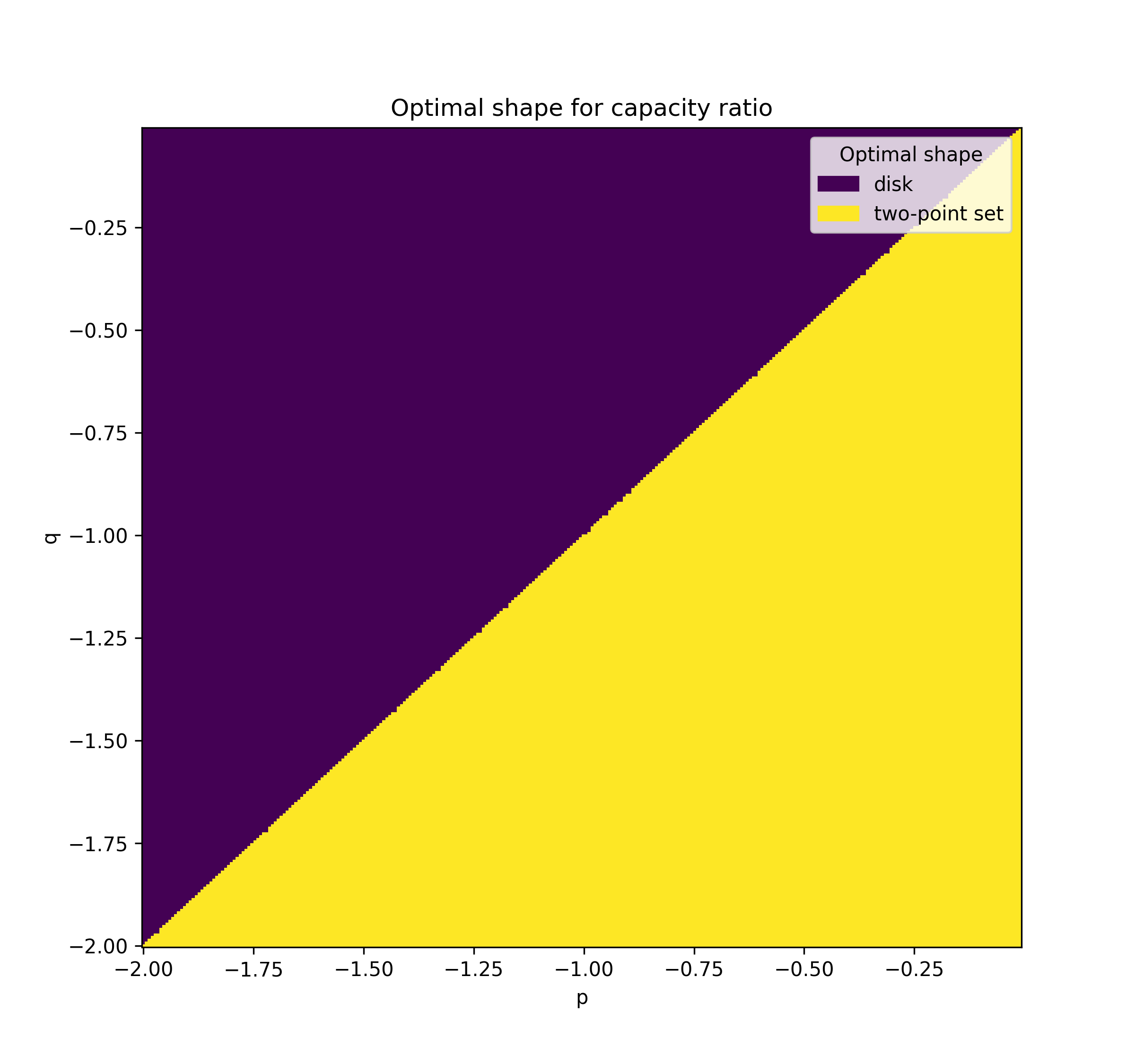}
	\caption{Optimal capacity ratio among the disk and regular polygon vertex sets $P_N$, $N=2,\ldots,50$, on a $300\times300$ uniform grid with $r,s\in[0.01,2]$. Here $p=-r, q=-s$.}
	\label{fig:cr_2d_02}
\end{figure}

\FloatBarrier 
\subsection{Observation when $r \ge 2$ and $0 < s < 2$}

We find that when $r \ge 2$ and $0 < s < 2$, the vertex sets $P_N$ with odd $N$ outperform the disk in some regions, with apparently nested phase boundaries partitioning the region into subdomains where each tested odd $N$ is optimal among the competitors. Beyond these boundaries (that is, for smaller values of $s$), the disk appears to maximize the capacity ratio among these competitors. See \autoref{fig:cr_2d}.

\begin{figure}
\centering
\includegraphics[totalheight=10cm]{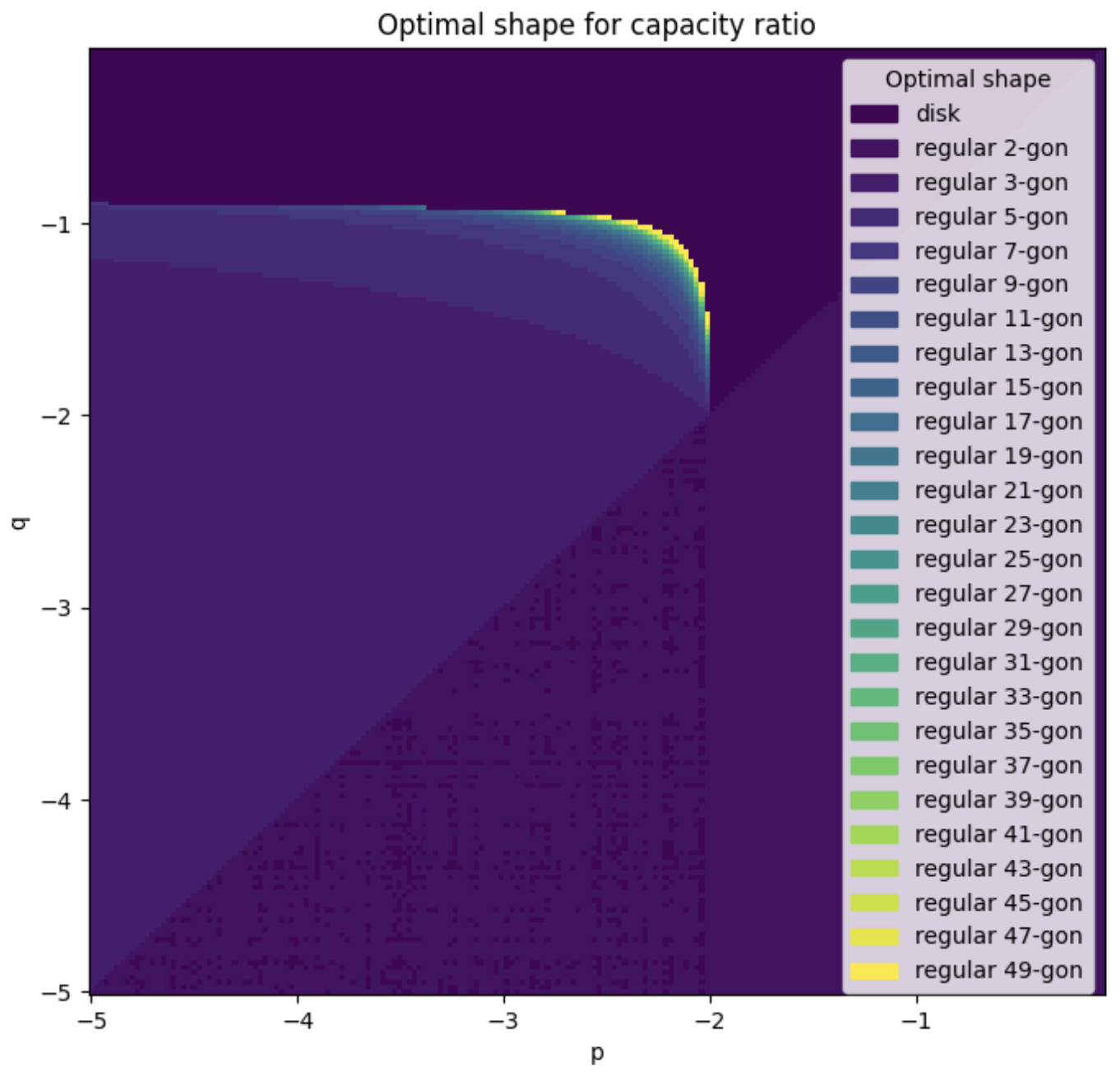}
	\caption{Optimal capacity ratio among the disk and regular polygon vertex sets $P_N$, $N=2,\ldots,50$, on a $198\times198$ uniform grid with $r,s\in[0.1,5]$. Displayed maximizers need not be unique. The algorithm is given in \autoref{2dim:algo}.} \label{fig:cr_2d}
\end{figure}

\FloatBarrier 
\subsection{Intersections}\label{sec:intersection}
\autoref{fig:cr_2d_1} and \autoref{fig:cr_2d_2} show winner-region maps for a smaller competitor family. The individual polygon--disk equality curves, plotted in Figures \ref{fig:capratio1}--\ref{fig:capratio3}, intersect rather than remain nested as $N$ increases.

\begin{figure}[htbp]
  \centering
  \begin{minipage}[t]{0.45\textwidth}
    \centering
    \includegraphics[width=\textwidth]{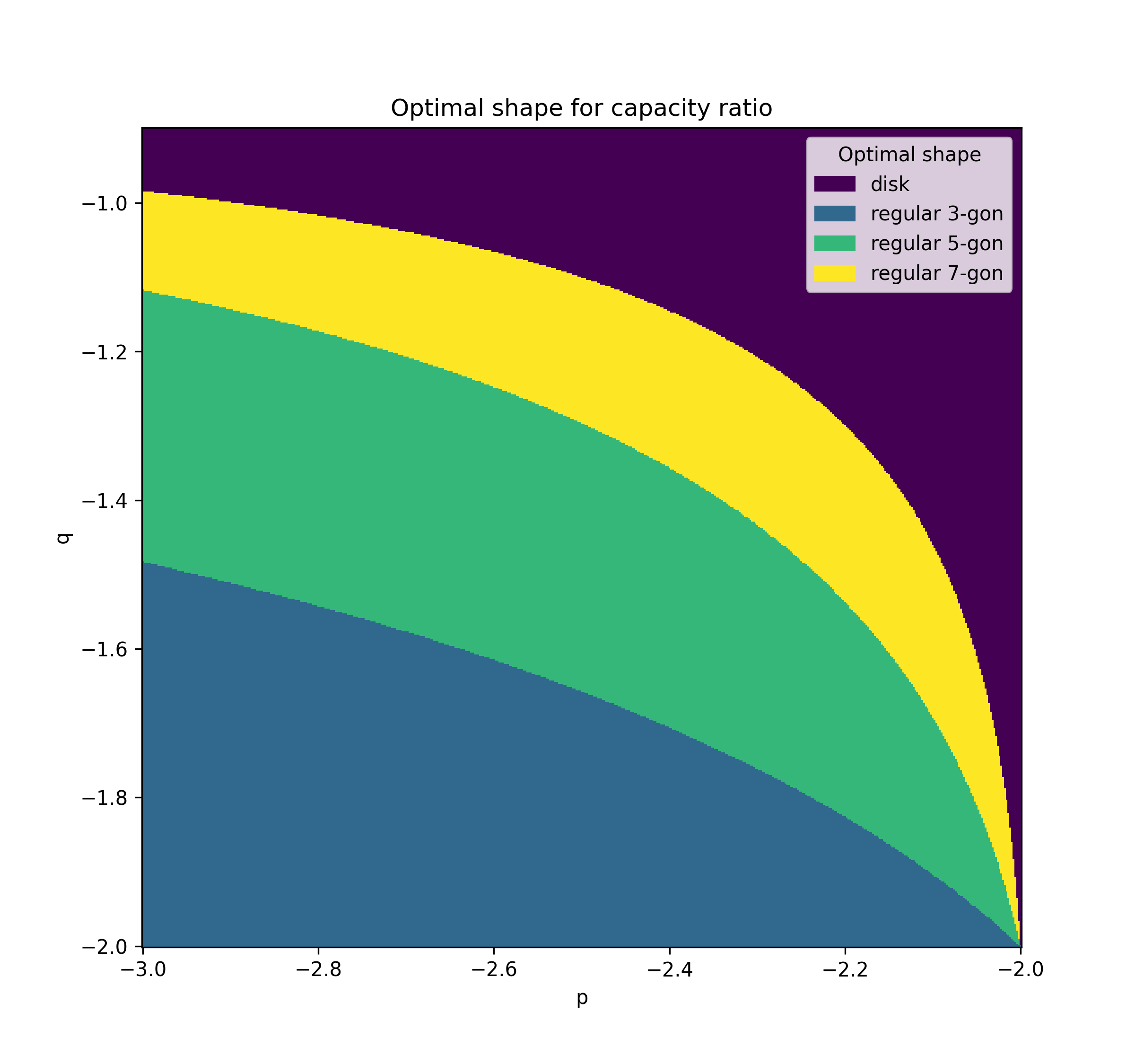}
    \caption{Best ratio among the disk and $P_N$, $N=2,\ldots,7$, near $(-2,-2)$. \label{fig:cr_2d_1}}
  \end{minipage}
  \hfill
  \begin{minipage}[t]{0.45\textwidth}
    \centering
    \includegraphics[width=\textwidth]{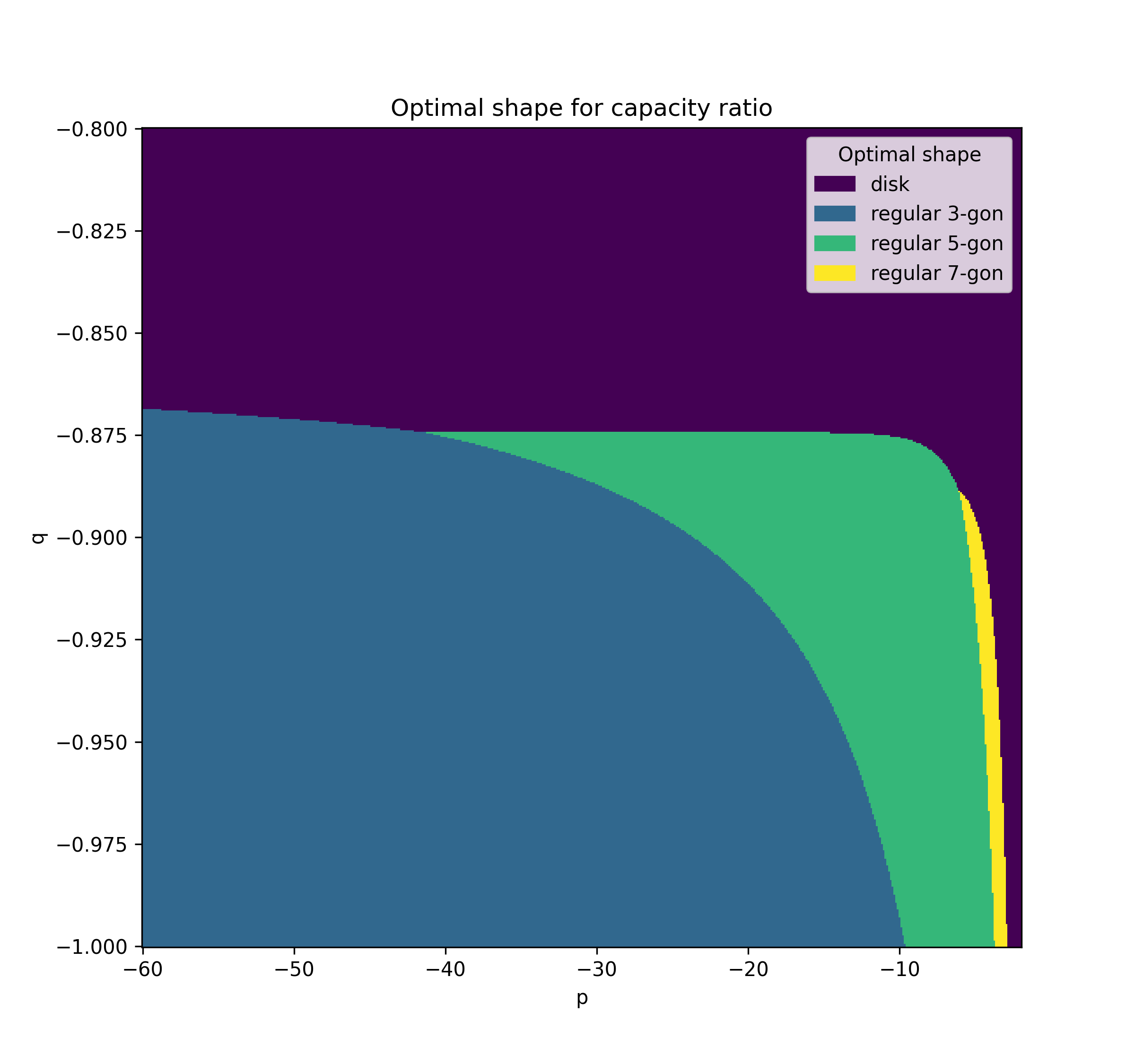}
    \caption{Best ratio among the disk and $P_N$, $N=2,\ldots,7$, zoomed-out view. \label{fig:cr_2d_2}}
  \end{minipage}
\end{figure}

\FloatBarrier 
We also plot the equality curves using \autoref{ineq:region}, whose vertex-set formulas are conjectural in general but verified numerically on the sampled grids for $N=3,5,7,9$ in Figures \ref{fig:capratio1}--\ref{fig:capratio3}. The region of $(p,q)=(-r,-s)$ under each curve in \autoref{fig:capratio1} is where the corresponding $P_N$ has a larger capacity ratio than the unit disk. At first glance, these disk-improvement regions might appear nested as $N$ increases, but a closer examination of \autoref{fig:capratio2} reveals intersections between the curves. All the curves originate from $(-2,-2)$.

\begin{figure}[htbp]
  \centering
  \begin{minipage}[b]{0.47\textwidth}
    \centering
    \includegraphics[width=\textwidth]{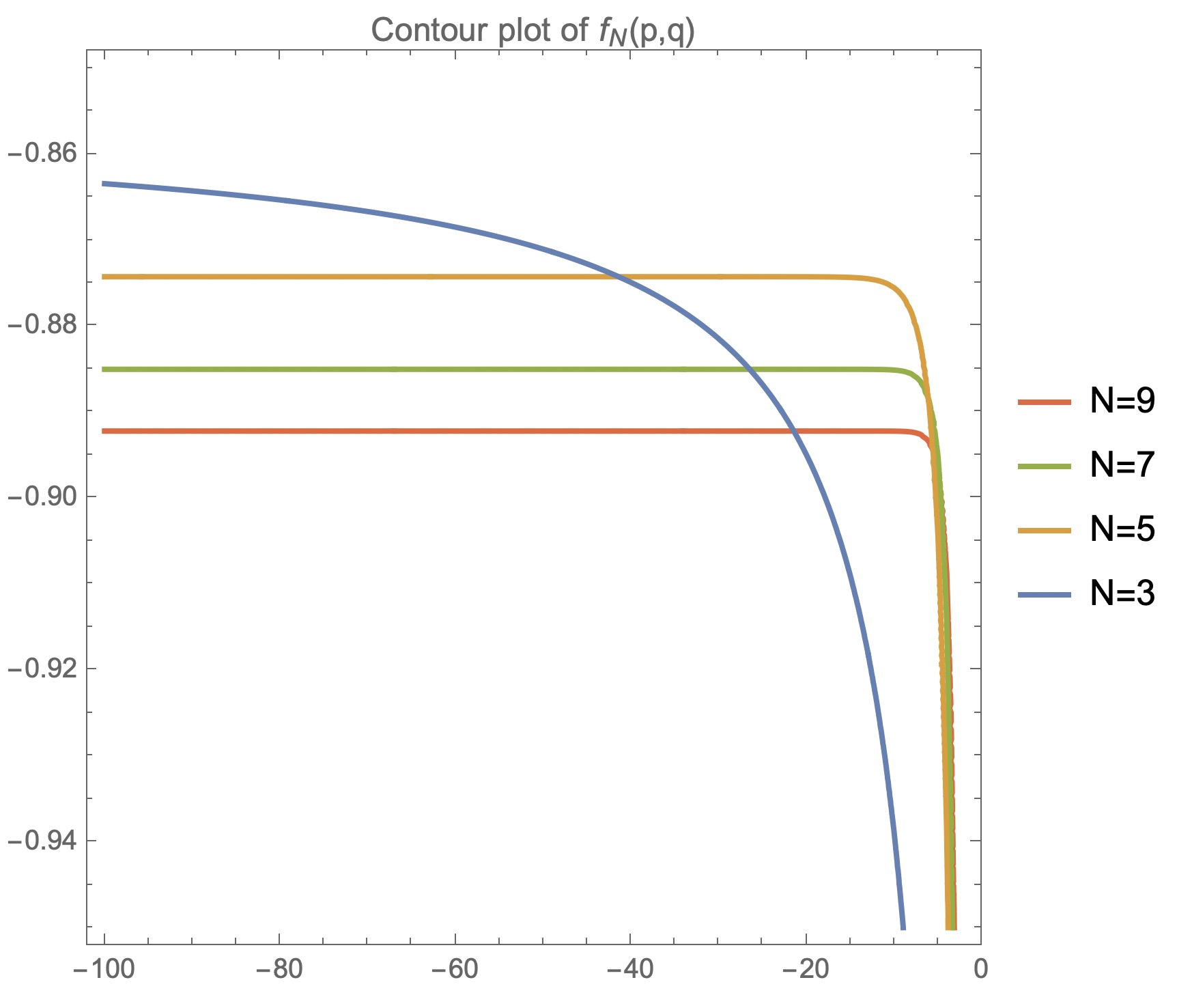}
    \caption{Zoomed-out view. \label{fig:capratio1}}
  \end{minipage}
  \hfill
  \begin{minipage}[b]{0.47\textwidth}
    \centering
    \includegraphics[width=\textwidth]{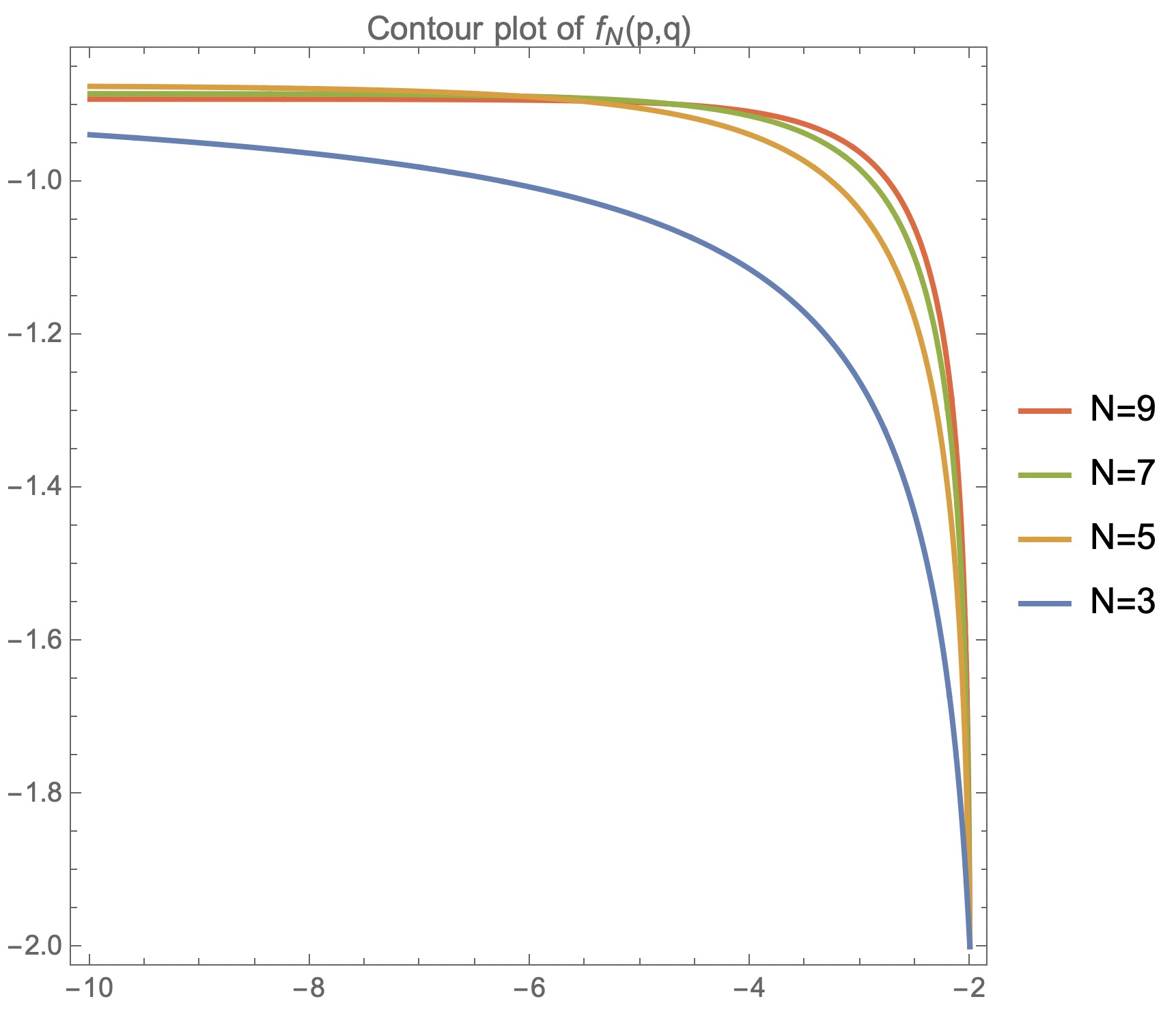}
    \caption{Zoomed-in view.\label{fig:capratio2}}
  \end{minipage}
  \begin{minipage}[b]{0.47\textwidth}
    \centering
    \includegraphics[width=\textwidth]{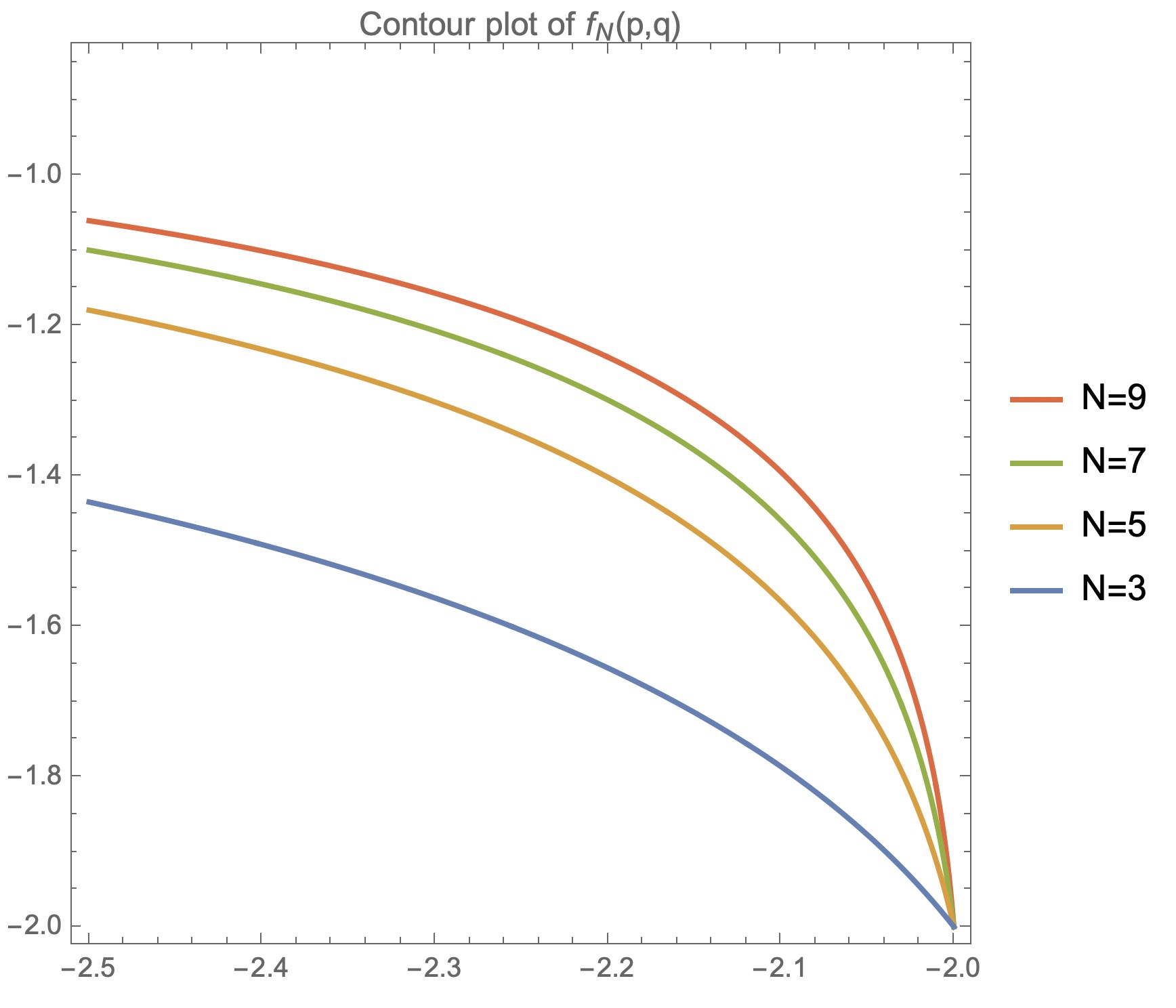}
    \caption{Near $(-2, -2)$. \label{fig:capratio3}}
  \end{minipage}
\end{figure}

\section{ \bf Higher dimensions}\label{chp:3d}

%
%
%
%

We investigate symmetry breaking for the capacity ratio conjecture in three and higher dimensions.

\subsection{Compare regular simplex and ball in higher dimensions}

We compare the regular $(n+1)$-point simplex $K_n \subset \bbR^n$ with diameter $1$ and the closed unit ball $\overline{\bbB^n}$. The capacity is known exactly for those sets, with
\begin{align}\label{eq:cap_simplex}
	\capr(K_n) =\left( \frac{n}{n+1} \right)^{\! 1/r}, \quad
	\capr(\overline{\bbB^n}) = 
	\begin{cases}
		2^{1-1/r}, & \text{if } r \ge 2,\\
		2 \left( \frac{\Gamma((n-1+r)/2) \Gamma(n-1)}{\Gamma(n-1+r/2) \Gamma((n-1)/2)}\right)^{\! 1/r}, & \text{if } 0 < r < 2.
	\end{cases}
\end{align}
For example, see \cite[Section 6]{CL25} and \cite[Appendix A]{CL24b}. The difference of capacity ratios $$\frac{\caps(K_n)}{\capr(K_n)} - \frac{\caps(\overline{\bbB^n})}{\capr(\overline{\bbB^n})}$$ when $n=3$ is plotted in \autoref{fig:capratio3dim1}, showing symmetry breaking like in two dimensions.

\begin{figure}
    \centering
    \includegraphics[width=0.6\textwidth]{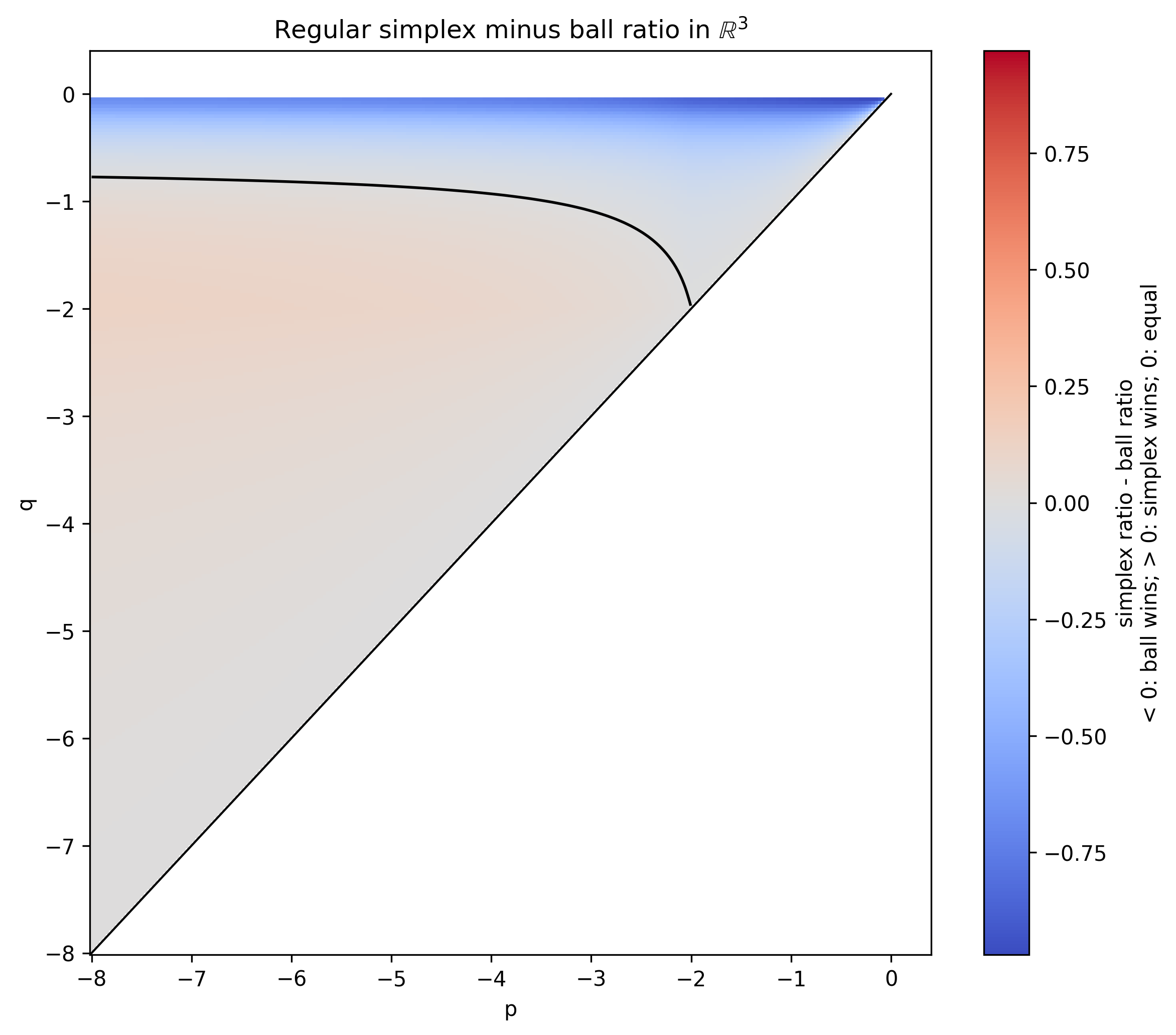}
    \caption{Simplex-minus-ball capacity-ratio difference for $n=3$, with the color scale centered at zero: positive (red) favors the simplex, and negative (blue) favors the ball. The black curve marks equality. As usual, $p=-r$ and $q=-s$. \label{fig:capratio3dim1}}
\end{figure}

\FloatBarrier 
\subsection{Understanding the high-dimensional limit}

Let us see how this symmetry breaking region given by the simplex changes as the dimension $n$ increases.

By equating the capacity ratios of the ball and regular simplex and substituting the capacity formula from \autoref{eq:cap_simplex}, we find a formula for $r$ in terms of $s$, and thus for $p$ in terms of $q$: 
\begin{align}\label{eq:curve}
	p = f(n, q) \coloneqq  \frac{q\log{\frac{2n}{n+1}}}{\log(n\Gamma(n-1-\frac{q}{2})\Gamma(\frac{n-1}{2}))-\log((n+1)\Gamma(\frac{n-1-q}{2})\Gamma(n-1))}.
\end{align}
In the region $p\le-2$, $-2<q<0$, this is an equality curve only for $-2<q<q^*(n)$, where the denominator is positive (see \autoref{lm:root_of_D}). In this range the simplex has the higher ratio exactly when $p<f(n,q)$. For $q^*(n)\le q<0$, the ball has the higher ratio for every finite $p\le-2$.

\begin{figure}
    \centering
    \includegraphics[width=0.64\textwidth]{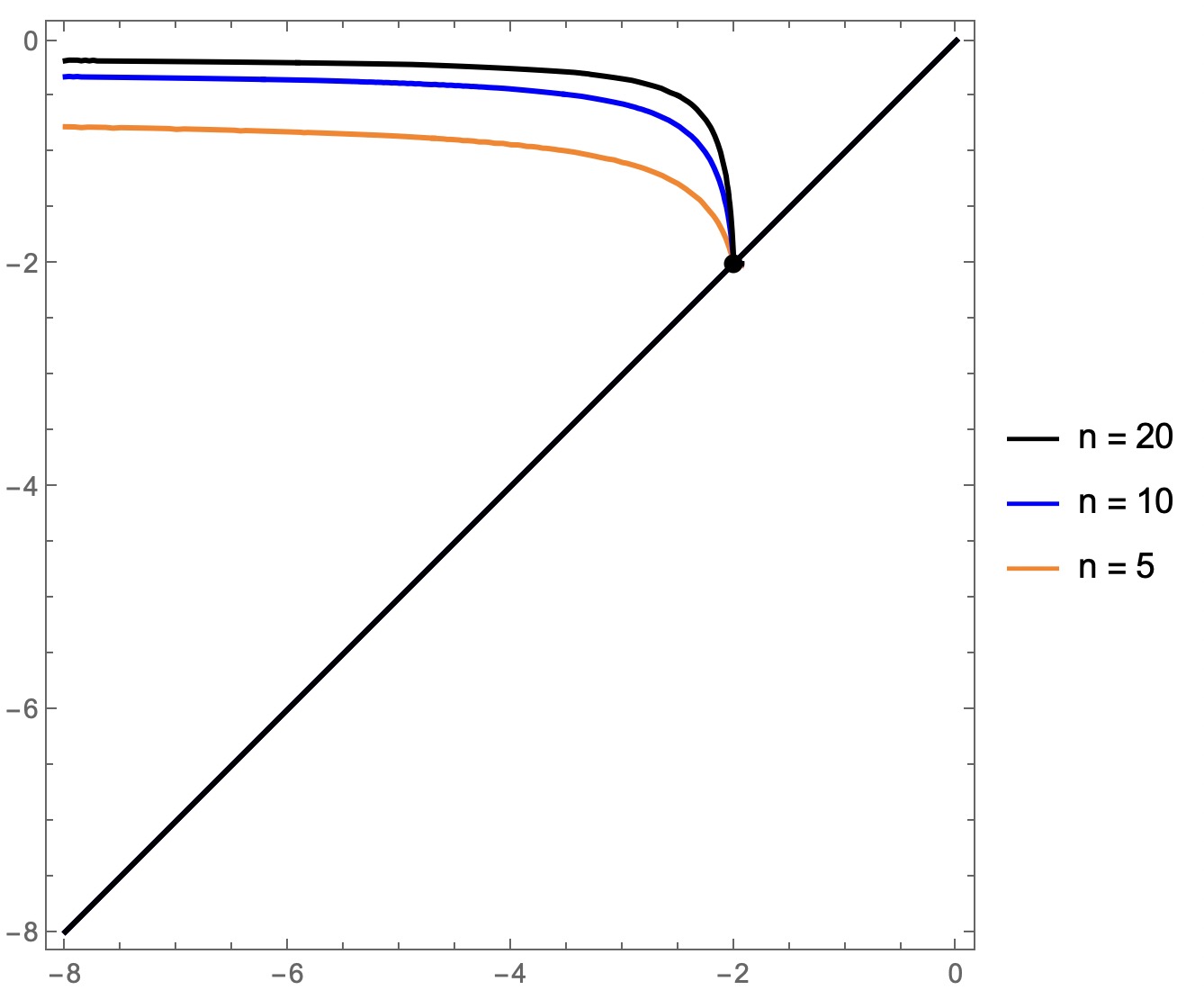}
    \caption{Compare the capacity ratio of regular simplex $K_n$ and ball $\overline{\bbB^n}$ in dimensions $n=5,10,20$. The curve indicates where the capacity ratios are equal. We observe that all curves pass the point $(p,q)=(-2, -2)$ and the curve gets closer to $q=0$ when $n$ increases. The horizontal asymptote as $p\to -\infty$ is $q^*(n)$. \label{fig:compare_n51020}}
\end{figure}

\FloatBarrier 
We want to understand properties of this curve.

\begin{lemma}\label{lm:root_of_D}
	For each fixed $n\ge2$, the denominator
	\begin{align}\label{eq:def_denom}
	D(n, q)
\coloneqq\log\bigl(n\,\Gamma(n-1-\tfrac q2)\,\Gamma(\tfrac{n-1}2)\bigr)
-\log\bigl((n+1)\,\Gamma(\tfrac{n-1-q}2)\,\Gamma(n-1)\bigr).
	\end{align}
	of $f(n,q)$ has a unique root $q^*(n) \in (-2, 0)$, and the denominator is positive when $-2<q<q^*(n)$.
\end{lemma}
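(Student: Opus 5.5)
The plan is to split the denominator into a constant part and a part that depends on $q$. Then check its sign at the two endpoints $q=-2$ and $q=0$, and show that it is strictly monotone in $q$. The intermediate value theorem then gives the unique root, and monotonicity gives the sign on $(-2,q^*(n))$.

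\textbf{Step 1: rewrite $D$.} Write
\begin{align*}
D(n,q)=\log\tfrac{n}{n+1}+\log\Gamma\bigl(n-1-\tfrac q2\bigr)-\log\Gamma\bigl(\tfrac{n-1-q}{2}\bigr)+\log\Gamma\bigl(\tfrac{n-1}{2}\bigr)-\log\Gamma(n-1).
\end{align*}
For $n\ge2$ and $-2\le q\le 0$, every Gamma argument is at least $1/2$. So $D$ is well defined and smooth in $q$ on $[-2,0]$.

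\textbf{Step 2: endpoint values.} At $q=0$ the two $q$-dependent Gamma terms cancel the two constant ones. This gives $D(n,0)=\log\frac{n}{n+1}<0$.

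At $q=-2$, the recursion $\Gamma(x+1)=x\Gamma(x)$ gives $\Gamma(n)/\Gamma(n-1)=n-1$. It also gives $\Gamma(\frac{n-1}{2})/\Gamma(\frac{n+1}{2})=\frac{2}{n-1}$. Hence
\begin{align*}
D(n,-2)=\log\tfrac{n}{n+1}+\log 2=\log\tfrac{2n}{n+1}>0
\end{align*}
for every $n\ge2$.

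\textbf{Step 3: strict monotonicity.} Differentiate in $q$, writing $\psi=\Gamma'/\Gamma$ for the digamma function:
\begin{align*}
\partial_q D(n,q)=\tfrac12\Bigl(\psi\bigl(\tfrac{n-1-q}{2}\bigr)-\psi\bigl(n-1-\tfrac q2\bigr)\Bigr).
\end{align*}
The two arguments differ by $(n-1-\tfrac q2)-\tfrac{n-1-q}{2}=\tfrac{n-1}{2}>0$, and both are positive. Since $\psi$ is strictly increasing on $(0,\infty)$ (because $\log\Gamma$ is strictly convex there), we get $\partial_q D<0$ on $[-2,0]$. So $D(n,\cdot)$ is strictly decreasing.

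\textbf{Step 4: conclude.} By the intermediate value theorem there is a root $q^*(n)\in(-2,0)$. Strict decrease makes it unique and gives $D>0$ exactly on $[-2,q^*(n))$.

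Nothing here is a genuine obstacle. The only points needing care are the Gamma-recursion identities at $q=-2$, and confirming that all Gamma arguments stay positive even in the edge case $n=2$, where they are $\Gamma(\tfrac12)$ and $\Gamma(1)$.
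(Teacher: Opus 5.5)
Your proposal is correct and follows essentially the paper's argument: existence from the intermediate value theorem with the same endpoint values $D(n,0)=\log\frac{n}{n+1}<0$ and $D(n,-2)=\log\frac{2n}{n+1}>0$, and uniqueness from strict convexity of $\log\Gamma$. The one difference is that you differentiate and use monotonicity of the digamma function, while the paper uses that the secant slope of $\log\Gamma$ over an interval of fixed width $\frac{n-1}{2}$ increases as the interval shifts right. Your strict decrease of $D(n,\cdot)$ also makes the positivity on $(-2,q^*(n))$ explicit, which the paper leaves to continuity.
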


\begin{proof}

\textbf{Existence:}
We apply the Intermediate Value Theorem by showing $D(n,0) < 0$ and $D(n,-2) > 0$:

\begin{itemize}
  \item If \(q=0\), then
  \begin{align}\label{eq:D(q,0)}
    D(n, 0)
    =\log\frac{n}{n+1}<0.
  \end{align}
%
%

  \item If \(q=-2\), then
	\[
	D(n, -2)
	=\log
	\frac{n\,\Gamma(n)\,\Gamma\!\bigl(\tfrac{n-1}2\bigr)}
	     {(n+1)\,\Gamma\!\bigl(\tfrac {n+1}{2}\bigr)\,\Gamma(n-1)}
	=\log\frac{2n}{n+1}
	>0.
	\]
\end{itemize}

\textbf{Uniqueness:}
Starting from the equation $D(n,q) = 0$,
we form a difference quotient of the strictly convex function \(\log\Gamma\):
\[
\frac{\,
  \log\Gamma\!\bigl(n-1-\tfrac q2\bigr)
  -\log\Gamma\!\bigl(\tfrac{n-1-q}2\bigr)
}{n-1-\tfrac{q}2-\frac{n-1-q}2}
\;=\;
\frac{\,\log\bigl((n+1)\,\Gamma(n-1)\bigr)
        -\log\bigl(n\,\Gamma(\tfrac{n-1}2)\bigr)\,}
     {\tfrac {n-1}2}.
\]
The left hand side is a difference quotient of the strictly convex function \(\log\Gamma\) and so is strictly increasing as the value \(-q>0\) increases. Thus it can equal the right side (which is independent of $q$) for at most one $q$-value.
\end{proof}

In \autoref{fig:compare_n51020}, when $n$ is very large the curve is almost flat and then drops down almost vertically when $p$ is close to $-2$. This suggests the presence of a limiting value of $p$ when $n$ goes to infinity. We give the proof in the following lemma.

\begin{lemma}\label{lemma:3dto-2}
	For each fixed $q\in(-2,0)$, $\lim_{n\to \infty} f(n, q) = -2$.
\end{lemma}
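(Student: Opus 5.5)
The plan is a direct asymptotic computation of the numerator and denominator of $f(n,q)$ in \eqref{eq:curve} as $n\to\infty$ with $q\in(-2,0)$ held fixed.

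\textbf{Numerator.} Since $\frac{2n}{n+1}\to 2$, we have $q\log\frac{2n}{n+1}\to q\log 2$, which is a finite negative number.

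\textbf{Denominator.} Regroup $D(n,q)$ from \eqref{eq:def_denom} into two Gamma ratios and a harmless term:
\begin{align*}
D(n,q)=\log\frac{n}{n+1}
+\log\frac{\Gamma(n-1-\tfrac q2)}{\Gamma(n-1)}
-\log\frac{\Gamma(\tfrac{n-1}{2}-\tfrac q2)}{\Gamma(\tfrac{n-1}{2})}.
\end{align*}
To each Gamma ratio I will apply the standard limit $\Gamma(x+a)/\bigl(\Gamma(x)\,x^{a}\bigr)\to 1$ as $x\to\infty$, for fixed real $a$. This is Wendel's limit, or equivalently a consequence of Stirling's formula. Here the shift is $a=-q/2\in(0,1)$ in both ratios. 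This gives
\begin{align*}
D(n,q)=o(1)-\tfrac q2\log(n-1)+\tfrac q2\log\tfrac{n-1}{2}+o(1)=-\tfrac q2\log 2+o(1).
\end{align*}
The key point is that the $\log(n-1)$ terms cancel exactly. So $D(n,q)\to -\frac q2\log 2$, which is strictly positive because $q<0$.

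\textbf{Conclusion.} Since the limit of the denominator is nonzero, the quotient converges, and
\[
f(n,q)\to \frac{q\log 2}{-\tfrac q2\log 2}=-2.
\]
As a side consequence, positivity of the limit shows that $D(n,q)>0$ for all large $n$. By \autoref{lm:root_of_D}, this means $q<q^*(n)$ eventually, so $f(n,q)$ is a genuine equality-curve value for large $n$. This observation is consistent with the lemma but is not needed for the limit itself.

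\textbf{Main obstacle.} The only delicate step is justifying the Gamma-ratio asymptotics uniformly enough to write $o(1)$ error terms. I would cite Wendel's inequality $x^{a}(x+a)^{-(1-a)}\cdots$ or simply Stirling's formula $\log\Gamma(x)=(x-\tfrac12)\log x-x+\tfrac12\log 2\pi+O(1/x)$. Expanding $\log\Gamma(x+a)-\log\Gamma(x)=a\log x+O(1/x)$ then yields both ratios directly. Everything else is routine bookkeeping of the cancellation.
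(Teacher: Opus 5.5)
Your proposal is correct and takes essentially the same route as the paper. Both arguments regroup the denominator into two Gamma ratios with shift $-q/2$ and apply Stirling-type ratio asymptotics (you use $x=n-1$ and $x=\tfrac{n-1}{2}$, the paper uses $z=n$ and $z=n/2$). Both then observe that the $\log n$ terms cancel, leaving $-\tfrac q2\log 2$, so the limit is $-2$. Your statement of Wendel's inequality is incomplete, but this is harmless: the Stirling expansion you also cite suffices, and only pointwise convergence for fixed $q$ is needed, not any uniformity.
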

\begin{proof}
By using the definition of $f(n, q)$,
\begin{align}
\lim_{n\to\infty} f(n, q)
= \frac{q \,\log 2}
       {\lim_{n\to\infty} \left(\log\bigl(\Gamma(n-1-\tfrac q2)\,\Gamma(\frac{n-1}{2})\bigr)
        -\log\bigl(\Gamma(\frac{n-1-q}2)\,\Gamma(n-1)\bigr) \right)}. \label{star}
\end{align}

By Stirling's ratio expansion \cite[\S5.11(iii)]{DLMF},
\begin{align*}\label{eq:stirling}
\frac{\Gamma(z+a)}{\Gamma(z+b)}
\sim z^{\,a-b}\Bigl[1 + \frac{(a-b)(a+b-1)}{2z} + \cdots\Bigr] \quad \text{as } z\to \infty.
\end{align*}

Apply this formula twice:

\begin{enumerate}
\item With \(z=n\), \(a=-1-\tfrac q2\), \(b=-1\):
\[
\frac{\Gamma(n-1-\tfrac q2)}{\Gamma(n-1)}
\sim n^{-\tfrac q2}\Bigl[1 + O(n^{-1})\Bigr].
\]

\item With \(z=\tfrac n2\), \(a=-\tfrac12\), \(b=-\tfrac12-\tfrac q2\):
\[
\frac{\Gamma(\tfrac{n-1}2)}{\Gamma(\tfrac{n-1-q}2)}
\sim \left(\tfrac n2\right)^{\! q/2}
\Bigl[1 + O(n^{-1})\Bigr].
\]
\end{enumerate}

Hence in \eqref{star} the denominator equals
\begin{align} \label{eq:denom}
\lim_{n\to \infty} \left(\log\bigl(n^{-q/2}\bigr)
+ \log\left(\frac n2\right)^{\! q/2} \right)
= -\frac q2\log 2,
\end{align}
so
\[
\lim_{n\to\infty} f(n, q) 
= -2.
\]
\end{proof}

We establish some properties of the asymptotic value $q^*(n)$ that can also be seen graphically in \autoref{fig:compare_n51020}.

\begin{lemma}\label{lemma:3dto0}
When $n\to \infty$, $q^*(n)$ as defined in \autoref{lm:root_of_D} approaches $0$.
\end{lemma}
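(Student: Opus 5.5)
We show that for every fixed $q_0\in(-2,0)$ we have $D(n,q_0)<0$ once $n$ is large. Combined with \autoref{lm:root_of_D}, this gives the claim. That lemma says $D(n,\cdot)$ is positive on $(-2,q^*(n))$, and its proof shows $D(n,\cdot)$ has exactly one root in $(-2,0)$, with $D(n,-2)>0>D(n,0)$. Hence $D(n,q_0)<0$ forces $q^*(n)<q_0$. Since $q^*(n)\in(-2,0)$ always, we get $q_0\le\liminf q^*(n)\le\limsup q^*(n)\le 0$, and letting $q_0\nearrow 0$ yields $q^*(n)\to 0$.

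To prove $D(n,q_0)<0$ eventually, we reuse the asymptotics from the proof of \autoref{lemma:3dto-2}. Write
\[
D(n,q)=\log\frac{n}{n+1}+\log\frac{\Gamma(n-1-\frac q2)}{\Gamma(n-1)}+\log\frac{\Gamma(\frac{n-1}{2})}{\Gamma(\frac{n-1-q}{2})}.
\]
The Stirling ratio expansion gives, for fixed $q$,
\[
\log\frac{\Gamma(n-1-\frac q2)}{\Gamma(n-1)}=-\tfrac q2\log n+O(n^{-1}),\qquad \log\frac{\Gamma(\frac{n-1}{2})}{\Gamma(\frac{n-1-q}{2})}=\tfrac q2\log\tfrac n2+O(n^{-1}).
\]
Substituting, $D(n,q)\to -\tfrac q2\log 2$, and $\log\frac{n}{n+1}\to 0$. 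The limit $-\tfrac q2\log 2$ is strictly positive for $q<0$, so this gives $D(n,q_0)>0$ for large $n$, not $D(n,q_0)<0$. The sign reasoning must therefore be reversed.

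Corrected plan: because $D(n,q_0)>0$ for all large $n$ and $D(n,\cdot)>0$ exactly on $(-2,q^*(n))$ (with a unique root), we get $q_0<q^*(n)$ for large $n$. The first paragraph's argument then goes through with the inequality flipped: $q_0\le\liminf q^*(n)$ for every $q_0<0$, and $q^*(n)<0$ always, so $q^*(n)\to 0$.

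The main obstacle is justifying the "exactly on" claim. \autoref{lm:root_of_D} is stated as positivity on $(-2,q^*(n))$. I still need that positivity at $q_0$ forces $q_0<q^*(n)$. This follows from uniqueness of the root together with $D(n,0)<0$: if $q_0>q^*(n)$, then $D(n,\cdot)$ would have to be negative on $(q^*(n),0)$, since a second sign change would produce a second root. Hence $D(n,q_0)>0$ gives $q_0<q^*(n)$. The $O(n^{-1})$ terms need only the pointwise limit at fixed $q_0$, so no uniformity in $q$ is required.
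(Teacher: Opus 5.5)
Your corrected argument is right and is essentially the paper's proof. For fixed $q_0\in(-2,0)$, the Stirling asymptotics give $D(n,q_0)\to -\tfrac{q_0}{2}\log 2>0$, while $D(n,0)=\log\frac{n}{n+1}<0$. The intermediate value theorem and the uniqueness of the root from \autoref{lm:root_of_D} then place $q^*(n)$ in $(q_0,0)$ for all large $n$; your explicit appeal to uniqueness spells out a step the paper leaves implicit. In a write-up, drop the opening wrong-sign paragraph and present only the corrected version.
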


\begin{proof}
	For arbitrary fixed $-2<q<0$, when $n$ is large, the denominator $D(n,q)\sim -\tfrac{q}{2} \log{2} >0$ from \autoref{eq:denom}. When $q=0$, we have $D(n,0)<0$ from \autoref{eq:D(q,0)}. By the Intermediate Value Theorem, for sufficiently large $n$, $q^*(n)$ is inbetween the arbitrarily chosen $q \in (-2, 0)$ and 0, which implies that $q^*(n)$ goes to $0$ when $n$ goes to infinity.

\end{proof}

\begin{lemma}
	$q^*(n+2) \ge q^*(n)$, so that the even and odd subsequences of $q^*(n)$ are increasing when $n\ge 2$.
\end{lemma}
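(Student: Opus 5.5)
The plan is to compare the two denominators $D(n+2,\cdot)$ and $D(n,\cdot)$ from \eqref{eq:def_denom} directly, and then use the sign structure from \autoref{lm:root_of_D}. By that lemma, $D(m,-2)>0$, $D(m,0)<0$, and $D(m,\cdot)$ has a unique root $q^*(m)$ in $(-2,0)$. Hence $D(m,q)>0$ exactly for $-2<q<q^*(m)$ and $D(m,q)<0$ for $q^*(m)<q<0$. So it suffices to show $D(n+2,q^*(n))>0$, which forces $q^*(n)<q^*(n+2)$. I will prove the stronger pointwise inequality $D(n+2,q)>D(n,q)$ for all $q\in[-2,0]$ and then evaluate it at $q=q^*(n)$, where $D(n,q^*(n))=0$.

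The key step is to compute $D(n+2,q)-D(n,q)$ exactly with the recursion $\Gamma(x+1)=x\Gamma(x)$ applied twice to each Gamma factor. This gives
\begin{align*}
\frac{\Gamma(n+1-\frac q2)}{\Gamma(n-1-\frac q2)}&=(n-\tfrac q2)(n-1-\tfrac q2), &
\frac{\Gamma(\frac{n+1}2)}{\Gamma(\frac{n-1}2)}&=\tfrac{n-1}2,\\
\frac{\Gamma(\frac{n+1-q}2)}{\Gamma(\frac{n-1-q}2)}&=\tfrac{n-1-q}2, &
\frac{\Gamma(n+1)}{\Gamma(n-1)}&=n(n-1).
\end{align*}
Collecting these together with the prefactors $n/(n+1)$ and $(n+2)/(n+3)$, all Gamma functions cancel. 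With $t=-q/2\in[0,1]$, this leaves
\[
D(n+2,q)-D(n,q)=\log\Bigl(1+\frac{2}{n(n+3)}\Bigr)+\log\frac{(n+t)(n-1+t)}{n(n-1+2t)}.
\]

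It then remains to check that this elementary expression is positive, and this is the only place where some care is needed, since the second term is nonpositive. Expanding, $(n+t)(n-1+t)-n(n-1+2t)=-t(1-t)$, which lies in $[-\tfrac14,0]$. Since $n(n-1+2t)\ge n(n-1)$, the second logarithm is at least $\log\bigl(1-b\bigr)$ with $b=\frac{1}{4n(n-1)}$. Writing $a=\frac{2}{n(n+3)}$, positivity reduces to $(1+a)(1-b)>1$, that is, $a(1-b)>b$. This amounts to a polynomial inequality in $n$. It holds for $n\ge2$: at $n=2$ we have $a=\tfrac15$ and $b=\tfrac18$, so $a(1-b)=\tfrac{7}{40}>\tfrac18$, and in general $a/b=8(n-1)/(n+3)\ge 8/5$ while $1-b\ge 7/8$, so $a(1-b)\ge \tfrac75 b>b$. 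This gives $D(n+2,q^*(n))>D(n,q^*(n))=0$, hence $q^*(n+2)>q^*(n)$. In particular, the even and odd subsequences of $q^*(n)$ are increasing for $n\ge2$.
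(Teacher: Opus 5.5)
Your proof is correct and follows the paper's argument. Both compute $D(n+2,q)-D(n,q)$ exactly via the Gamma recursion, obtaining $\log\bigl(1+\frac{2}{n(n+3)}\bigr)+\log\bigl(1-\frac{t(1-t)}{n(n-1+2t)}\bigr)$, which is the paper's expression with $q=-2t$, show it is positive, and evaluate at $q=q^*(n)$. Two minor differences: you bound the second factor below by $1-\frac{1}{4n(n-1)}$ before multiplying, whereas the paper expands the product into a single fraction and bounds its numerator. You also state explicitly the sign structure of $D(m,\cdot)$ on $(-2,0)$ that the paper uses implicitly in its last step.
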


\begin{proof}
Rewrite
\[
D(n, q) = \log \frac{n}{n+1} + \log \frac{\Gamma\left(n-1 - \frac{q}{2}\right)}{\Gamma(n-1)} - \log \frac{\Gamma\left(\frac{n-1 - q}{2}\right)}{\Gamma\left(\frac{n-1}{2}\right)}.
\]

Suppose $-2<q<0$ and $n\ge 2$. We have
\begin{align*}
&D(n+2,q)-D(n,q)\\
&= \log\frac{n+2}{n+3} + \log\frac{\Gamma\bigl(n+1-\tfrac q2\bigr)}{\Gamma(n+1)}
-\log\frac{\Gamma\bigl(\tfrac{n+1-q}2\bigr)}{\Gamma\bigl(\tfrac{n+1}2\bigr)}
-\log\frac{n}{n+1}\\
&\quad-\log\frac{\Gamma\bigl(n-1-\tfrac q2\bigr)}{\Gamma(n-1)}
+\log\frac{\Gamma\bigl(\tfrac{n-1-q}2\bigr)}{\Gamma\bigl(\tfrac{n-1}2\bigr)}\\
&= \log\left(1+\frac{2}{n(n+3)} \right)
+\log \left( 1 + \frac{q^2+2q}{4n(n-1-q)}\right) .
\end{align*}
Hence $D(n+2,q)-D(n,q)>0$, because
\begin{align*}
&\left(1+\frac{2}{n(n+3)} \right) \left( 1 + \frac{q^2+2q}{4n(n-1-q)}\right) \\
&= 1+ \frac{8n(n-1-q)+(n(n+3)+2)(q^2+2q)}{4n^2(n+3)(n-1-q)}\\
&> 1 + \frac{8n(n-1)-n(n+3)-2}{4n^2(n+3)(n-1-q)}\\
& \geq 1 + \frac{n(7n-12)}{4n^2(n+3)(n-1-q)}\\
&> 1.
\end{align*}
By substituting $q=q^*(n)$ and using that $D(n,q^*(n))=0$ we deduce $D(n+2, q^*(n))>0$, which implies $q^*(n+2)>q^*(n)$.

\end{proof}

\subsection{Irregular symmetry-breaking sets in three dimensions}\label{sec:3d-algo}

As in \autoref{sec:intersection} for two dimensions, there are shapes in three dimensions whose capacity ratio surpasses that of the ball for certain parameters $r$ and $s$. In two dimensions, the vertex sets of regular odd-sided polygons form such a family naturally. In three dimensions no analogous family is apparent, but the five-point and six-point configurations in Figures \ref{fig:pts5} and \ref{fig:pts6} give explicit examples. The five-point set splits one vertex of a regular tetrahedron symmetrically into two points on the unit sphere. The six-point set is a regular pentagonal pyramid with apex at the north pole and base at height $-3/10$. Explicitly,
\[
\begin{aligned}
K_6&=\{(0,0,1)\}\cup
\{(\rho\cos\theta_j,\rho\sin\theta_j,-3/10):j=0,\ldots,4\},\\
\rho&=\frac{\sqrt{91}}{10},\qquad \theta_j=\frac{2\pi j}{5}.
\end{aligned}
\]

\begin{figure}[htbp]
  \centering
  \setlength{\captionindent}{0pt}
  \begin{minipage}[t]{0.45\textwidth}
    \centering
    \includegraphics[width=\textwidth]{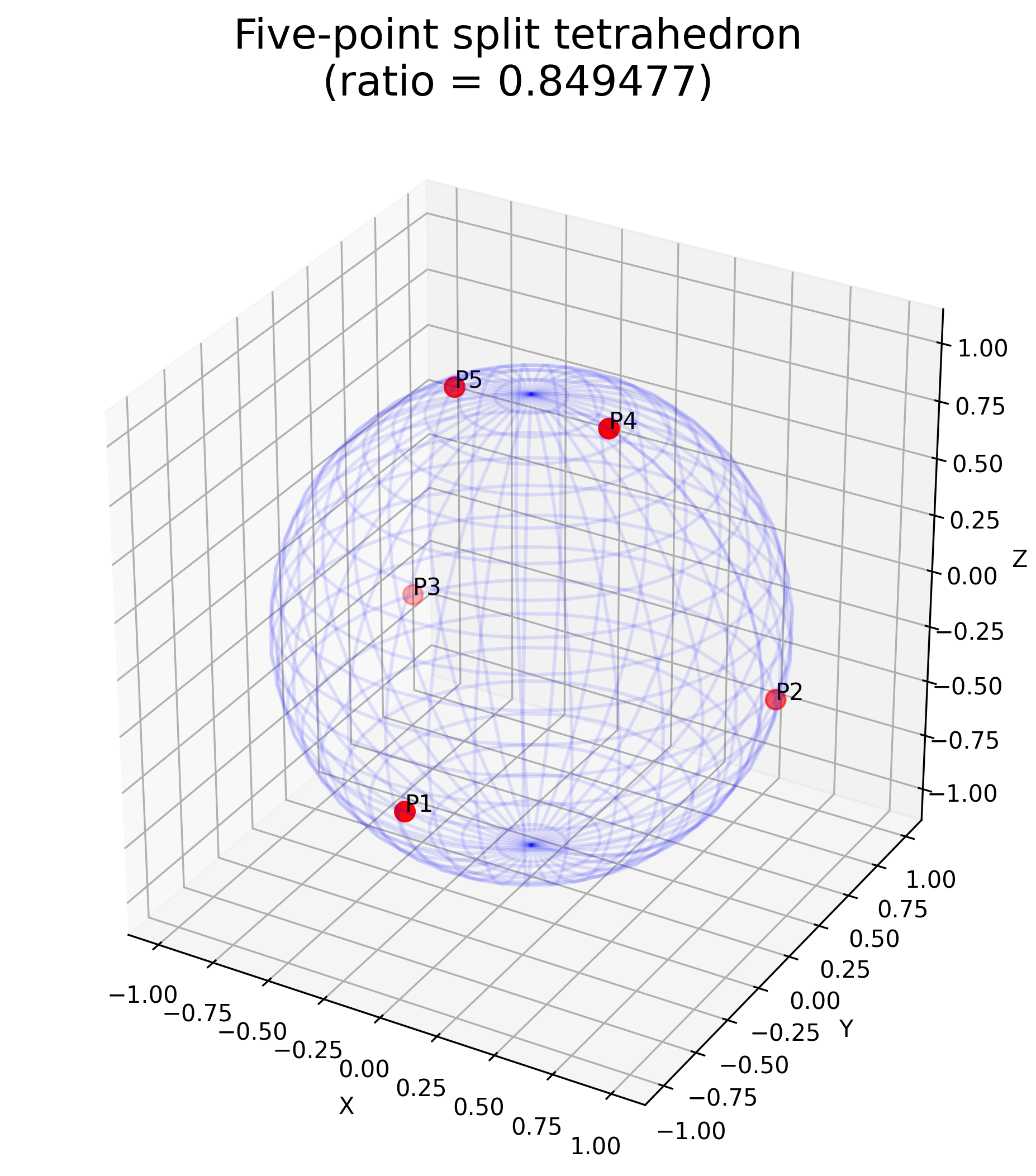}
    \caption{A $5$-point set with higher capacity ratio than the ball when $r=3$ and $s=1.08$. The points are
    $(\pm 1/3,0,2\sqrt{2}/3)$, $(0,-2\sqrt{2}/3,-1/3)$, and $(\pm\sqrt{6}/3,\sqrt{2}/3,-1/3)$. Its capacity ratio is approximately $0.849477$, exceeding the value $0.844721$ for the ball. \label{fig:pts5}}
  \end{minipage}
  \hspace{0.05\textwidth}
  \begin{minipage}[t]{0.45\textwidth}
    \centering
    \includegraphics[width=\textwidth]{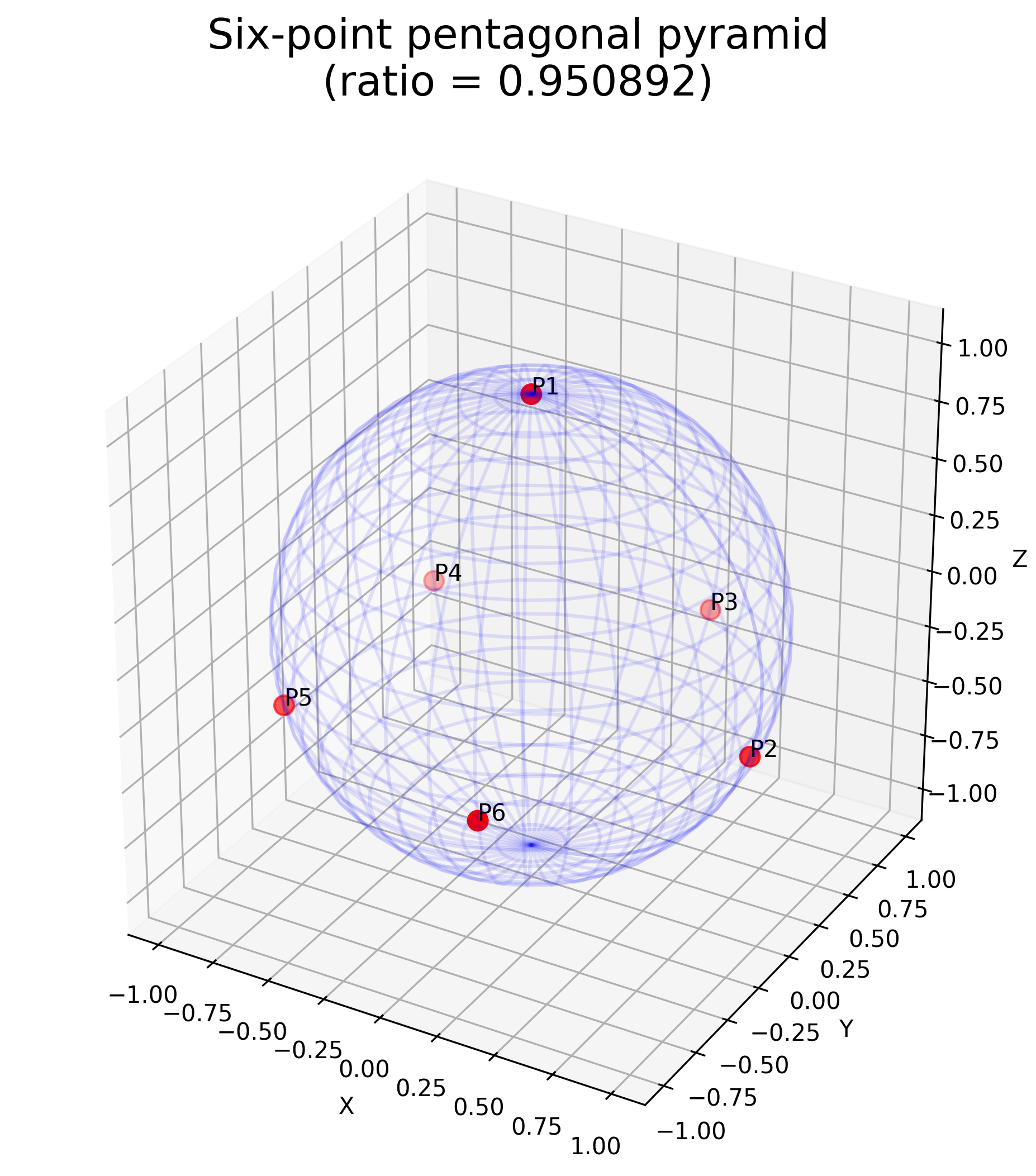}
    \caption{A $6$-point regular pentagonal pyramid with higher capacity ratio than the ball when $r=2.2$ and $s=1.54$. Its apex is $(0,0,1)$; its five base vertices have height $-3/10$ and radius $\sqrt{91}/10$. Its capacity ratio is approximately $0.950892$, exceeding the value $0.945824$ for the ball. \label{fig:pts6}}
  \end{minipage}
\end{figure}

\FloatBarrier 
A second pattern mirrors the two-dimensional case in \autoref{sec:intersection}. The curves in \autoref{fig:capratio_comparison_higher_dim} indicate where the capacity ratios of the regular simplex (4 points), the five-point set, and the six-point set equal that of the ball. All three pass through $(-2,-2)$. In the close-up in \autoref{fig:capratio_comparison_higher_dim}(a), they appear nested, with the six-point curve highest. The wider view in \autoref{fig:capratio_comparison_higher_dim}(b), however, reveals that the regular-simplex curve intersects both other curves. The numerical crossings with the five- and six-point curves occur near $(-6.114777,-0.814685)$ and $(-9.858871,-0.750228)$, respectively. Thus, as in two dimensions, increasing the number of points does not uniformly enlarge the observed region where a configuration beats the ball. This crossover behavior remains to be understood.

The regular-simplex curve is given analytically by \autoref{eq:curve} with $n=3$. For the fixed five- and six-point sets, we compute capacities on a prescribed grid of $r$-values and use a bracketed root solver to find $s$ where the configuration and ball have equal capacity ratios. For $r>2$, Bj\"orck's support theorem \cite[Theorem 12]{B56} reduces the denominator calculation to supports of at most four points. Computational details are available in the \href{https://github.com/vhdvhd/Riesz-capacity-ratios-with-negative-exponents}{code}.

\begin{figure}[htbp]
    \centering
    \includegraphics[width=\textwidth]{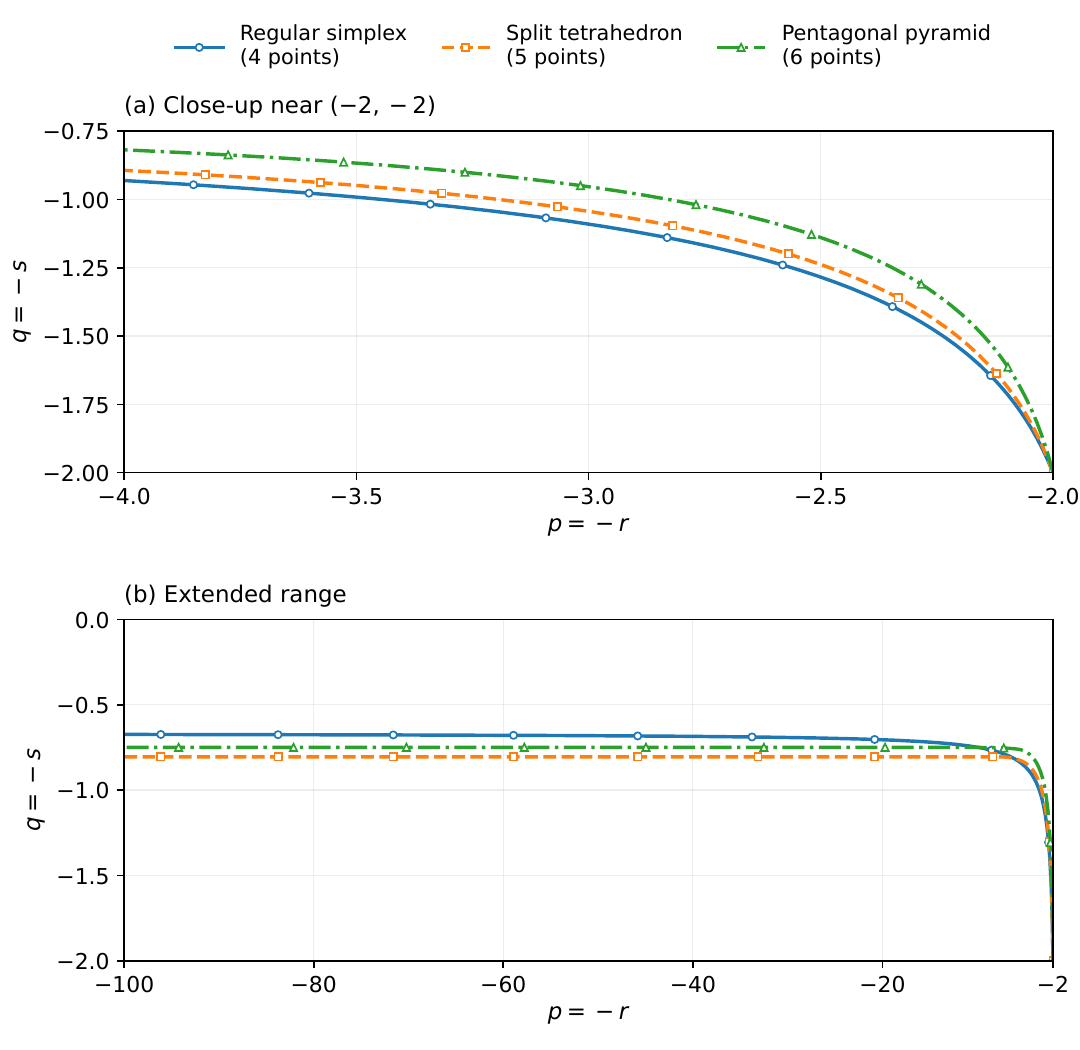}
    \caption{Capacity-ratio equality curves for the regular simplex and the configurations in \autoref{fig:pts5} and \autoref{fig:pts6}. (a)~Close-up near $(-2,-2)$. (b)~Extended view with $-100\le p\le-2$ and $-2\le q\le0$, revealing intersections analogous to those in two dimensions: the regular-simplex curve crosses both other curves. Sampled checks favor the ball just above each curve and the corresponding finite set just below. All three curves meet at $(-2,-2)$. \label{fig:capratio_comparison_higher_dim}}
\end{figure}


\FloatBarrier 
The high-dimensional picture should also extend into the second quadrant. Burchard, Choksi and Hess-Childs \cite{BCH20} proved that, for each fixed $q>0$, the ball in $\bbR^n$ fails to maximize $\capq(K)/\operatorname{diam}(K)$ when $n$ is sufficiently large. Recall that diameter corresponds to $p=-\infty$. Locating the symmetry-breaking boundary for positive $q$ remains a challenge because accurate capacity computations are difficult.

\FloatBarrier
\section*{Acknowledgments}
The research in this paper forms part of the author's PhD dissertation at the University of Illinois Urbana--Champaign \cite{F25}.

\section*{Declarations}
\subsection*{Funding details}
This work was supported by the National Science Foundation under grant number 2246537 to Richard Laugesen.
\subsection*{Disclosure statement}
There are no competing interests to declare. 
\subsection*{Disclosure of generative AI use}
During the preparation of this manuscript, the author used OpenAI's GPT-5.6 Sol and GPT-6 Astra (Ultra reasoning setting) to assist with proofreading, reviewing and explaining the associated code, and making minor corrections to the text and code. These tools were used to improve readability and support code understanding and checking. The author takes responsibility for the content of the manuscript and the associated code.
\subsection*{Supplemental online material}
The code for the experiments in \autoref{chp:1d}, \autoref{chp:2d} and \autoref{chp:3d} is available on github: \\
\url{https://github.com/vhdvhd/Riesz-capacity-ratios-with-negative-exponents}.

\bibliographystyle{plain}

\end{document}